\newtheorem{theorem}{Theorem}[section]
\newtheorem{lemma}[theorem]{Lemma}
\newtheorem{proposition}[theorem]{Proposition}
\newtheorem{definition}{Definition}
\newtheorem{remark}{Remark}
\newtheorem{example}{Example}
\newcommand{\extp}{\@ifnextchar^\@extp{\@extp^{\,}}}
\def\@extp^#1{\mathop{\mathchoice{\textstyle}{}{}{}\bigwedge\nolimits^{\!#1\!}}}
\def\calV{{\mathcal V}}
\def\bbR{{\mathbb R}}
\def\cal{\mathcal}
\def\bb{\mathbb}
\def\dim{\operatorname{dim}}
\def\k{\Bbbk}
\begin{document}
\newdimen\captionwidth\captionwidth=\hsize

\title{Odd Khovanov homology for hyperplane arrangements}

\author{Zsuzsanna Dancso and Anthony Licata}
\thanks{Anthony Licata was partially supported by the National Science Foundation under agreement No. DMS-0635607. Any opinion, finding, and conclusions or recommendations expressed in the material do not necessarily reflect the views of the National Science Foundation.}
\address{Zsuzsanna Dancso\\
Department of Mathematics\\
  University of Toronto\\
  40 St George St\\
  Toronto Ontario M5S 2E4\\
  Canada
}
\email{zsuzsi@math.toronto.edu}
\urladdr{http://www.math.toronto.edu/zsuzsi}
\address{Anthony Licata\\
Mathematical Sciences Institute\\
  Australian National University\\
  Union Lane\\
  Canberra, ACT, 0200\\
  Australia
}
\email{amlicata@gmail.com}
\urladdr{http://maths-people.anu.edu.au/~licatat}

\date{January 23, 2011}

\begin{abstract}
We define several homology theories for central hyperplane arrangements, categorifying well-known polynomial invariants including the characteristic polynomial, 
Poincar\'e polynomial, and Tutte polynomial.  We consider basic algebraic properties of such chain complexes, including long-exact sequences associated to 
deletion-restriction triples and dg-algebra structures.  We also consider signed hyperplane arrangements, and generalize the odd Khovanov homology of 
Ozsv\'ath-Rasmussen-Szab\'o from link projections to signed arrangements.  We define hyperplane Reidemeister moves which generalize the usual Reidemeister moves 
from framed link projections to signed arrangements, and prove that the chain homotopy type associated to a signed arrangement is invariant under hyperplane Reidemeister moves.
\end{abstract}

\maketitle
\tableofcontents

\section{Introduction}
Let $\k$ be a field, and let $V$ be a vector space over $\k$ endowed with an inner product $\langle-,-\rangle:
V\times V\rightarrow \k$. For convenience, we require $\k$ to be of characteristic zero so that the inner product induces an isomorphism
between $V$ and $V^*$. (This restriction can be removed by rephrasing all that follows in terms of dual spaces.)
A \emph{vector arrangement} is a collection of vectors $\nu_1,...,\nu_n$ in $V$.  A vector arrangement determines an arrangement of 
hyperplanes 
$\cal H = \{V; H_1,...,H_n\}$ in $V$, with $H_i = \nu_i^\perp=\{v \in V: \langle v, \nu_i \rangle=0\}$.  We allow the degenerate case $\nu_i=0$, in which case $H_i=V$.
For $S \subset [n]=\{1,\hdots,n\}$, set
$H_S = \cap_{s\in S} H_s.$  Important features of the hyperplane arrangement $\cal H$ are captured by polynomial invariants associated to the arrangement; 
here ``polynomial invariant" 
means a polynomial which depends only on the associated matroid, that is, only on the linear 
dependencies between hyperplanes. An example is the \emph{characteristic polynomial} $\chi(\cal H)$ of $\cal H$,
$$
\chi(\cal H)= \sum_{S \subseteq [n]} (-1)^{|S|} t^{\dim H_S},
$$
which generalizes the chromatic polynomial of a graph, \cite{TeraoOrlik}.  An important feature of the characteristic polynomial is that it satisfies a 
deletion-restriction relation,
\begin{equation}\label{eq:delres0}
 \chi(\cal H)=\chi(\cal H - H_i)-
\chi(\cal H^{H_i}),
\end{equation}
where $\cal H^{H_i}$ is the restriction of $\cal H$ to a hyperplane $H_i$ and $\cal H - H_i$ is the arrangement with the hyperplane 
$H_i$ deleted.  
(We refer to Section \ref{sec:linalg} for complete definitions of deletion and restriction).
Similar deletion-restriction relations hold for other polynomials associated to hyperplane arrangements, including the Poincar\'e polynomial and 
the two-variable Tutte polynomial.  

In this paper we categorify these invariants, upgrading them from polynomials to homology theories.  Our constructions are modeled on the Odd Khovanov homology of  
Ozsv\'ath-Rassmussen-Szab\'o \cite{OzsvathRasmussenSzabo} which categorifies the Jones polynomial of links in the three-sphere.
In particular, the relation (\ref{eq:delres0}) becomes a long exact sequence in homology, as expected by analogy with the Skein relation for the Jones polynomial 
and the resulting long-exact sequence in (odd) Khovanov homology.

The first part of the paper considers homology theories for unsigned hyperplane arrangements.  These constructions are free from the restrictions imposed by 
isotopy invariance in the theory of link homologies, and as a result there is a lot of freedom in the definition of boundary maps between chain groups.  For 
example, in considering categorifications of the characteristic polynomial, we see that essentially the same chain groups can be made into a chain complex using 
two very different choices of boundary maps.  One choice of boundary maps gives a chain complex $\cal C_d(\cal H)$ which essentially generalizes to hyperplane 
arrangements earlier work \cite{HelmeRong} on categorification of the chromatic polynomial of a graph.  A second 
choice of boundary map, however, gives a completely different complex $\cal C_\partial(\cal H)$, which turns out to be compatible with a multiplication defined 
at the chain 
level; thus this second choice assigns a differential graded algebra to each central hyperplane arrangement.  Similar categorifications of the Poincar\'e and Tutte 
polynomials are defined in the body of the paper. A deletion-restriction triple $(\cal H,\cal H - H_i,\cal H^{H_i})$ gives 
rise to a short exact sequence of chain complexes in all of our homology theories, though the resulting long exact sequences of homology behave quite differently 
for the two different choices of boundary map.  

The second part of the paper considers signed hyperplane arrangements, that is,  arrangements with a sign assigned to each hyperplane.
The motivation for the consideration of signed arrangements comes from low-dimensional topology, as a planar projection of a framed link in the three-sphere defines in a natural way a signed hyperplane arrangement.  In low dimensional topology one considers link projections up to the equivalence relations generated by Reidemeister moves.  These moves, too, generalize to an equivalence relation on signed 
arrangements.  In Section \ref{sec:signed} we construct a bi-graded homology theory for signed arrangements which is invariant under generalized Reidemeister moves.  The Euler characteristic of this homology theory gives a  Reidemeister invariant ``Jones polynomial"  for signed arrangements.
The precise connection to topology is that, when restricted to arrangements coming from projections of framed links, our homology theory yields an invariant of framed links.  This invariant is quite closely related to the reduced Odd Khovanov homology of  \cite{OzsvathRasmussenSzabo}; since theirs is an invariant of ordinary (unframed) links, a writhe-dependent shift is needed to recover Odd Khovanov homology from our bi-graded theory (see Proposition \ref{prop:OddKH}).

All of the chain complexes we use are essentially straightforward modifications to hyperplane arrangements of existing constructions for 
links \cite{Khovanov,OzsvathRasmussenSzabo,Bloom}.  Nevertheless, hyperplane analogs of link homologies seem both sufficiently natural and 
sufficiently interesting as to warrant further investigation.  For example, the differential graded algebra structure on chain groups associated to 
unsigned arrangements does not 
(as far as we are aware) occur the polynomial categorifications of low-dimensional topology.  Moreover, hyperplane arrangements and their signed analogs 
admit a duality, known as Gale duality, which greatly generalizes duality of planar graphs.  The dual of a non-planar graph thus makes sense as a 
hyperplane arrangement.  The considerations in Section \ref{sec:signed} for signed hyperplane arrangements are perhaps the most interesting in the 
paper. In fact, the Gale duality statement for signed arrangements is cleaner than for ordinary arrangements, as there is an obvious isomorphism between the chain complexes assigned to a signed arrangement and its Gale dual.

For unsigned graphs, rather than hyperplane arrangements, constructions distinct but somewhat similar to ours appear in the earlier works\cite{HelmeRong,JassoRong}.
A completely different categorification of the Tutte polynomial for (unsigned) hyperplane arrangements was also given previously by Denham \cite{Denham}.  
There are some fundamental differences between our approach here and his; for example, the version of the Tutte polynomial which occurs as an Euler characteristic 
in our constructions has a rather different normalization than the one in \cite{Denham}.  It would interesting to relate the constructions in this paper to his work, 
and to several of the other basic algebraic structures in the combinatorics of hyperplane arrangements.  It should be possible to formulate many of the constructions of this paper for more general matroids; a homology theory which categorifies the Tutte polynomial of a matroid has also recently been investigated independently by A. Lowrance and M. Cohen.

Finally, we point out that it is an old question to find a recipe that, given an arrangement 
$\cal H$, produces a natural bi-graded vector space whose graded 
dimension is the Tutte polynomial of $\cal H$.  In light of the categorifications of low dimensional topology, it is also reasonable to modify this 
question and search instead for a chain complex whose graded Euler characteristic is the Tutte polynomial.  
Two of the homology theories of the current paper give a solution for hyperplane arrangements.  

\subsection{Acknowledgements}
The authors would like to thank the Institute for Advanced Study, where most of this research was carried out, for their hospitality
and support.  We would like to thank the anonymous referees for a number of useful comments and suggestions, and for noticing an error in a previous version of this paper.  A.L. would also thank Nick Proudfoot for several interesting conversations. 

\subsection{Linear algebra of vector arrangements}\label{sec:linalg}
Let $\cal V =\{V; \nu_1,...,\nu_n\}$ be a vector arrangement in a $k$-dimensional vector space $V$ over $\k$,
and define $\cal H = \{V; H_1,...,H_n\}$ to be the associated arrangement of 
hyperplanes in $V$.  The arrangement $\cal H$ is central, meaning that the intersection $\cap_{i} H_i \neq \emptyset$.
Let
\begin{equation}\label{eq:Wdef}
	W = \{(w_1,\hdots, w_n)\in \k^n \mid \sum_i w_i \nu_i = 0\}
\end{equation}
denote the space of linear dependencies in $\cal V$. This can be thought of as the orthogonal complement of span$\{\nu_i: i=1,...,n\}$ in $\k^n$. The inner product on $\k^n$ induces an inner product 
$\langle -,-\rangle$ on $W$, thus identifying $W$ with $W^*$.

To a subset $S\subset [n]$ there are three naturally associated vector spaces:
\begin{itemize}
\item $H_S = \cap_{i\in S} H_i$,
\item $V_S = span \{\nu_i\}_{i\in S}$, and
\item $W_S = \{w = (w_1,\hdots,w_n) \in W \mid w_r = 0 \text{ for } r\notin S\}$.
\end{itemize}
For $s \in S$ and $r\notin S$, there are natural inclusions
\begin{equation}\label{eq:inclusions}
	H_{S} \hookrightarrow H_{S-s}, \ \ V_S \hookrightarrow V_{S\cup r}, \ \ 
	W_S \hookrightarrow W_{S \cup r},
\end{equation}
and orthogonal projection maps
\begin{equation}\label{eq:projections}
 H_{S} \to H_{S\cup r}, \ \ V_S \to V_{S-s}, \ \ 
	W_S \to W_{S-s}.
\end{equation}

\begin{remark}\label{rm:defGale}\rm{
The spaces $H_S$ and $V_S$ are related by standard linear duality, in that $V_S$ is the 
space of vectors orthogonal to $H_S$ (equivalently, the space of linear functionals which vanish on it).  
The relationship between $H_S$ and $W_S$ is more subtle: the space of linear 
dependencies $W$ comes equipped with $n$ linear functionals, namely, the coordinate projections 
$\nu_i^\vee: w = (w_1,\hdots,w_n)\mapsto w_i$. Via the identification $W \cong W^*$, $\nu_i^\vee$ can be thought of
as the orthogonal projection of the standard basis vector $(0,...,1,...,0)$ onto $W$ (where 1 appears in the $i$-th coordinate). Thus 
$\cal V^\vee =\{W; \nu^\vee_1,...,\nu^\vee_n\}$ is another vector arrangement, known as the 
\emph{Gale dual} of $\cal V$.  Let $\cal H^\vee$ be the hyperplane arrangement associated to the vector arrangement $\cal V^\vee$; 
the defining hyperplanes of $\cal H^\vee$ are $H_i^\vee = \ker(\nu_i^\vee)$.  Then the space $W_S$ above is given by 
$W_S = H_{S^c}^\vee = \cap_{i\notin S} H_i^\vee$, where $S^c$ denotes the complement of $S$ (that is, $[n]\setminus S$).  
Thus the spaces $\{H_S\}_{S\subset [n]}$ and $\{W_S\}_{S\subset [n]}$ are 
exchanged by Gale duality. Note that if the vectors $\nu_i$ generate $V$, then the canonical inner products on $V$ and $W$ are related by 
$\langle \nu_i, \nu_j \rangle = -\langle \nu_i^\vee, \nu_j^\vee \rangle$. This follows from the fact that 
$\nu_i+\nu_i^\vee=x_i$ for all $i$, where the $x_i$ are the standard orthonormal basis vectors of $\k^n$.}
\end{remark}

The inclusions and projections (\ref{eq:inclusions}) and (\ref{eq:projections}) induce maps of exterior algebras. We will denote all the maps which increase the size of the subset $S$ by $d$'s, and all those which decrease subset size by $b$'s (``$b$'' is a backwards ``$d$'' ). 
Thus we have  
\begin{equation}\label{eq:diffmaps}
\extp^\bullet (H_{S}) \xrightarrow{d_{S,r}} \extp^\bullet (H_{S\cup r}), \ \ 
\extp^\bullet (V_S) \stackrel{d_{S,r}}{\hookrightarrow} \extp^\bullet (V_{S\cup r}), \ \ 
	\extp^\bullet (W_S) \stackrel{d_{S,r}}{\hookrightarrow} \extp^\bullet (W_{S \cup r}),
\end{equation}
and
\begin{equation}\label{eq:diffmapsback}
\extp^\bullet (H_{S}) \stackrel{b_{S,s}}{\hookrightarrow} \extp^\bullet (H_{S-s}), \ \ 
\extp^\bullet (V_S) \xrightarrow{b_{S,s}} \extp^\bullet (V_{S-s}), \ \ 
	\extp^\bullet (W_S) \xrightarrow{b_{S,s}} \extp^\bullet (W_{S-s}).
\end{equation}

However, the vectors from the original vector arrangement $\cal V$ can also be used to define maps between exterior algebras by wedging and contracting.  For $s \in S$, $r \notin S$, we define:
\begin{equation}\label{eq:wedgemaps}
\extp^\bullet H_{S} \xrightarrow{w_{S,s}} \extp^{\bullet+1} H_{S-s}, \ \ 
\extp^\bullet V_{S} \xrightarrow{w_{S,r}} \extp^{\bullet+1} V_{S \cup r}, \ \ 
	\extp^\bullet W_{S} \xrightarrow{w_{S,r}} \extp^{\bullet+1} W_{S \cup r}.
\end{equation}
\begin{equation}\label{eq:contractionmaps}
 \extp^\bullet H_{S} \xrightarrow{c_{S,r}} \extp^{\bullet-1} H_{S \cup r}, \ \ 
\extp^\bullet V_{S} \xrightarrow{c_{S,s}} \extp^{\bullet-1} V_{S}, \ \ 
	\extp^\bullet W_{S} \xrightarrow{c_{S,s}} \extp^{\bullet-1} W_{S-s}.
\end{equation}

The maps in (\ref{eq:wedgemaps}) act by wedging on the left by $\nu_s$, $\nu_r$, and $\nu_r^\vee$, respectively. 
In the first of these, note that $\nu_s$ is considered as an element of $H_{S-s}$ by orthogonal projection from $V$.  
Similarly, for the third map we consider $\nu_r^\vee$ 
as an element of $W_{S\cup r}$.

The maps in (\ref{eq:contractionmaps}) act by contraction with the vectors $\nu_r$, $\nu_s$ and $\nu_s^\vee$ respectively.
To define the first map, think of
$\nu_r$ as an element of $H_S$ by projecting it there, and note that for any element $h \in \extp^\bullet H_S$
the image $(\nu_r \perp h)$ lies in $\extp^{\bullet}H_{S\cup r}$, since $\nu_r$ is orthogonal to $H_{S \cup r}$.  Similarly, contraction with $\nu_s^\vee$ on $\extp^\bullet W_S$ has image in $\extp^{\bullet-1}W_{S-s}$ since $\nu_s^\vee$ is orthogonal to
$W_{S-s}$.   However, in the case of $V_S$, $\nu_s$ is not necessarily orthogonal to $V_{S-s}$, so the image does not lie in
$\extp^{\bullet-1}V_{S-s}$. Note also that in case of $H_S$ and $W_S$, the wedge and contraction maps are linear duals to each other
(via the identifications $H_S\cong H_S^*$ and $W_S\cong W_S^*$ induced by the inner product). This is not true for $V_S$: the linear dual of $w_{S,r}$
is $c_{S\cup r, r}$ composed with an orthogonal projection onto $V_{S-s}$.  

\begin{remark}\label{rmk:super}\rm{
All of the algebras we consider in this paper are naturally $\mathbb{Z}$-graded, and 
they will be considered as superalgebras for the $\mathbb{Z}_2$ grading induced from the $\mathbb{Z}$ grading.  Tensor products are always taken in the category of superalgebras.  Thus
$(a\otimes 1)(1 \otimes b)=(-1)^{\deg a \cdot \deg b}(1 \otimes b)(a \otimes 1)$, and
in this way there is an algebra isomorphism
$\extp^\bullet (V \oplus W) \cong \extp^\bullet V \otimes \extp^\bullet W$.}
\end{remark}

\begin{remark}\rm{We have taken $\k$ to be a field for convenience, but indeed almost all constructions in this paper may be carried out over the integers, 
or over an arbitrary commutative ring.  The one exception is in one part of Section \ref{sec:signed}, where we must work over a field (see Remark \ref{rem:field}).
}
\end{remark}

\subsection{Deletion and Restriction}\label{subseq:delres}
On the level of vector arrangements, a {\it subarrangment} of $\cal V$ is an arrangement in
the same ambient space consisting of a subset of the vectors in $\cal V$. For $\nu_i \in \cal V$,
the {\it deletion} of $\nu_i$ is the operation which results in the subarrangement with $\nu_i$
removed, denoted $\cal V - \nu_i$. Given $\nu_i \in \cal V$, the {\it restriction} $\cal V^{\nu_i}$
is an arrangement in the orthogonal complement $\nu_i^\perp \subseteq V$, consisting of the vectors
$\{P(\nu_j): j \neq i\}$, where $P$ is the orthogonal projection to $\nu_i^\perp$. 

The corresponding notions for hyperplane arrangements follow from the above.
A {\it subarrangement} of $\cal H$ is an arrangement consisting of a subset of
hyperplanes in $\cal H$, in the same ambient vector space. The arrangement obtained
by deleting the hyperplane $H_i$ from $\cal H$ is denoted $\cal H-H_i$.

Given a hyperplane $H_i \in \cal H$, the {\it restriction} of $\cal H$ to $H_i$ is the arrangement \linebreak
$\cal H^{H_i}=\{H_i; H_1\cap H_i,...,H_{i-1}\cap H_i,
H_{i+1}\cap H_i,...,H_k\cap H_i\}$. 

Deletion and restriction are Gale dual notions: $(\cal H-H_i)^\vee=(\cal H^\vee)^{H_i^\vee}$ 
and $(\cal H^{H_i})^\vee=\cal H^\vee-H_i^\vee$, and similarly for vector arrangements.

\newpage
\subsection{Polynomials associated to Hyperplane arrangements}\label{sec:polys}
\subsubsection{The Characteristic Polynomial}\label{subseq:charpoly}
As before, for
$S\subseteq [n]$, let $H_S=\bigcap_{s\in S}H_s$.  
The characteristic polynomial (see \cite{TeraoOrlik}) of the central hyperplane arrangement $\cal H$ 
is defined as
\begin{equation}\label{eq:statesum}
\chi(\cal H, q) = \sum_{S \subseteq [n]} (-1)^{|S|} (1+q)^{\dim H_S}.
\end{equation}
This is a slightly non-standard normalization, and the definition in \cite{TeraoOrlik} would be given by the substitution $t=1+q$. 

There is a closely related, though distinct, polynomial that also occurs as an Euler characteristic in categorification: 
\begin{equation}\label{eq:statesum2}
\bar{\chi}(\cal H, q)=\sum_{S\subseteq [n]}q^{|S|}(1-q)^{\dim H_S}
\end{equation}

Informally we will refer to both of the above polynomials as characteristic polynomials.

\subsubsection{The Poincar\'e Polynomial}\label{subseq:poinpoly}
The Poincar\'e polynomial of a hyperplane arrangement $\pi(\cal H, t)$ contains the same information as the characteristic
polynomial, as they are related by a change of variables \cite{TeraoOrlik}.  
A convenient state sum
definition of the Poincar\'e polynomial (in a slightly unusual normalization) is
\begin{equation}\label{eq:statesumpi}
\pi(\cal H, q)= \sum_{S\subseteq [n]} (-1)^{|S|} (1+q)^{\dim V_S}
\end{equation}

\subsubsection{The Tutte Polynomial}\label{seubseq:tuttepoly}
The Tutte polynomial of $\cal H$ is usually defined as
$$T(\cal H; x,y)= \sum_{S \subseteq [n]} (x-1)^{\dim H_S - \dim H_{[n]}}(y-1)^{\dim W_S}.$$
The version we will categorify is the analogue of the version of the Tutte polynomial of graphs used in \cite{HelmeRong}, given by the state sum formula
\begin{equation}\label{eq:statesumtutte}
\hat{T}(\cal H; x,y)= \sum_{S \subseteq [n]}(-1)^{|S|} (1+x)^{\dim H_S}(1+y)^{\dim W_S}.
\end{equation}
The relationship between these two polynomials is given by the following formula, where $k$ denotes the dimension of $V$:
$$\hat{T}(\cal H; x,y)=(-1)^k(-x-1)^{dim H_{[n]}}T(\cal H;-x,-y).$$

\subsubsection{Relations from deletion-restriction}\label{subseq:delres}
All of the polynomials defined above satisfy {\it deletion-restriction 
formulas}. If $H_l\in\cal H$ is a given hyperplane in the arrangement $\cal H$, 
$\cal H - H_l$
is the subarrangement produced by deleting $H_l$ from $\cal H$, and $\cal H^{H_l}$
is the restriction to $H_l$, then 
\begin{equation}\label{eq:delres}
 \chi(\cal H, q)=\chi(\cal H - H_l,q)-
\chi(\cal H^{H_l},q),
\quad \text{and} \quad
\bar{\chi}(\cal H, q)=\chi(\cal H - H_l,q)+
q\chi(\cal H^{H_l},q). 
\end{equation}

Similar relations hold for the Poincar\'e and Tutte polynomials if $H_l$ is non-degenerate, i.e., if $\nu_l \neq 0$:
\begin{equation}\label{eq:delrespoin}
 \pi(\cal H, q)=\pi(\cal H - H_l)-(1+q)\pi(\cal H^{H_l}), 
\end{equation}
and
\begin{equation}\label{eq:delrestutte}
 \hat{T}(\cal H; x, y)=\hat{T}(\cal H - H_l; x,y)-\hat{T}(\cal H^{H_l}; x,y).
\end{equation}

\section{Categorifications for unsigned arrangements}
In this section we will describe several homology theories which categorify the
characteristic, Poincar\'e, and Tutte polynomials. The different constructions arise from 
the freedom to choose between the $H_S$, $V_S$ and $W_S$ spaces (and their tensor products)
to build chain groups, and between the natural inclusion/projection maps versus the wedge/contraction 
maps to construct differentials. We develop categorifications of the two characteristic polynomials, using the spaces $H_S$, in this section.  The homology theories categorifying the Poincar\'e and Tutte polynomials are very similar, and we will only state the results and highlight where the proofs differ from those for the characteristic polynomials.

\subsection{Hypercubes associated to a hyperplane arrangement}\label{subsec:cubes}
In the vein of \cite{Khovanov} and \cite{HelmeRong},
we use the state sum formula (\ref{eq:statesum}) to construct a chain complex,
the graded Euler characteristic of which is the characteristic polynomial
by design.
The first step
is to arrange the terms of the formula on the vertices of a cube,
in this case the vertices correspond to subsets $S \subseteq [n]$. 
The space $\extp^\bullet H_S$ is placed at the vertex corresponding to $S$. 

We illustrate the cube on the example of the {\it braid arrangement} in $\bb R^3$. 
This arrangement consists of three hyperplanes
defined by the vector arrangement $\nu_1=(1,-1,0)$, $\nu_2=(0,1,-1)$, and
$\nu_3=(-1,0,1)$.
By placing the spaces $\extp^\bullet H_S$ at vertices, and connecting
them by an edge if the subsets $S$ differ only by one element, 
we obtain the following 3-dimensional cube:

\begin{center}
 \input cube.pstex_t
\end{center}
Note that the vertices of the cube are organized into columns according to the size of the
subsets $S$.

In the next section we will discuss the maps associated to the cube edges which make up the
differentials; here we only define the chain groups, which are obtained by ``flattening'' the cube along the ``columns''.  Thus we get chain groups
$$C^i=\bigoplus_{S \subseteq [n], \, |S|=i} \extp^\bullet H_S.$$

Thus the chain complex $\cal C$ is given by 
\begin{equation}\label{eq:cubedef}
\cal C= \bigoplus_{S \subseteq [n]} C_S, \text{ where } C_S=\extp^\bullet H_S.
\end{equation}
This vector space has a natural bi-grading given by $\deg \extp^j H_S=(|S|, j)$.
Note that the Euler characteristic with respect to the first grading component is 
$$\chi_q(\cal C)=\sum_{S \subseteq [n]} (-1)^{|S|}(1+q)^{\dim H_S}=\chi(\cal H, q).$$
Thus, if we impose differentials for which the homological degree of $C_S$ is $|S|$, 
as we will do in our first construction, the graded Euler characteristic of the complex will be the characteristic polynomial.  
In the second construction, discussed in Section \ref{subsubsec:partial}, we will need to shift the grading, and the resulting graded Euler characteristic will yield the second characteristic polynomial
$\bar{\chi}(\cal H,q)$.

There are also natural cubes involving the spaces $V_S$,  
$$\cal C^P=\bigoplus_{S \subseteq [n]}\extp^\bullet V_S,$$ leading to a categorification
of the Poincar\'e polynomial.  

To categorify the Tutte polynomial, we will use the tensor product of the spaces $H_S$ and $W_S$. 
$$\cal C^T=\bigoplus_{S \subseteq [n]}\extp^\bullet H_S \otimes \extp^\bullet W_S.$$
This vector space is triply-graded, with 
$\deg \extp^i H_S \otimes \extp^j W_S= (|S|, i, j)$. As with the chain groups used to categorify the characteristic polynomial, 
one choice of differential will be homogeneous for this grading convention, while another choice will require us to make minor 
adjustments to the definition of the gradings.

\subsection{Boundary maps}\label{subsec:boundary}
The first class of categorifications uses differentials which arise from
the natural inclusion and orthogonal projection maps
explained in Section \ref{sec:linalg}. In fact these inclusion and projection maps can be used to define chain 
complexes in even more general settings; for
 example, instead of exterior algebras, we could use instead symmetric algebras, or in principle any other exact 
functor from vector spaces to graded vector spaces.  Moreover, these boundary maps are rather straightforward odd generalization from graphs to hyperplane arrangements of the differentials defined in \cite{HelmeRong, JassoRong}.

In contrast, the boundary maps defined in Section \ref{subsubsec:partial} use the wedge maps as differentials.  
These maps use in a fundamental way the structure in the exterior algebra, and the resulting chain complexes have a dg-algebra structure and simpler theorems for deletion/restriction.

\subsubsection{Boundary maps arising from natural inclusion or orthogonal projection}\label{subsubsec:d}
We describe the construction for the characteristic polynomial
in detail. Recall that for each edge of the cube corresponding
to a subset $S \subseteq [n]$ and an element $r \notin S$
there are maps induced by the orthogonal projections
$$d_{S,r}: \extp^\bullet H_S \to \extp^\bullet H_{S \cup r}.$$
These are of degree $(1,0)$ with respect to the natural bi-grading
defined in Section \ref{subsec:cubes}.

To define differentials, we want to take the direct sums of the maps. 
However, as in Khovanov homology, we first need to introduce signs to make 
the (a priori commutative)
cube anti-commutative, this is needed for the square of the differential to be zero. 
To achieve this, we set
$$\varepsilon_{S,r}=\begin{cases}
                        -1 & \text{ if } |\{s \in S, s<r\}|= \text{odd} \\
			 1 & \text{ otherwise.}
                       \end{cases}
$$
The differentials are the sums with appropriate signs of the 
edge maps $d_{S,r}$ going from the algebras in column $i$ to the ones in column $(i+1)$:
$$d^i=\bigoplus_{|S|=i,\; r \notin S} \varepsilon_{S,r} d_{S,r}.$$
Let us illustrate this on the braid arrangement example:

\begin{center}
 \input fullexample.pstex_t
\end{center}

\begin{definition}\label{def:main}\rm{
We denote the resulting 
chain complex by $C_d^\bullet(\cal H, \k)$, 
and the homology by $H_d^\bullet(\cal H, \k)$, and call it (odd)
{\it characteristic homology}.
}
\end{definition}

\begin{proposition}\label{prop:CharCat}
The homology $H_d^\bullet(\cal H, \k)$ has graded Euler characteristic equal to the characteristic polynomial:
$$\chi_q(H_d^\bullet(\cal H, \k))=\chi(\cal H, q).$$
\end{proposition}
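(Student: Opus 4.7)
The plan is to reduce the proposition to the standard fact that the graded Euler characteristic of a chain complex equals the graded Euler characteristic of its homology, and then compute the Euler characteristic of $C_d^\bullet(\cal H, \k)$ directly from the state-sum defining $\chi(\cal H, q)$. Before either of these, however, one has to verify that $(d,C)$ actually forms a chain complex, i.e.\ that $d \circ d = 0$; this is the only non-trivial step.

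For the direct Euler characteristic computation, recall that the chain group in homological degree $i$ is
\[
C^i \;=\; \bigoplus_{|S|=i} \wedge^\bullet H_S,
\]
and the internal $q$-grading is the exterior-algebra grading. Since $\dim \wedge^j H_S = \binom{\dim H_S}{j}$, the graded dimension of $\wedge^\bullet H_S$ is $(1+q)^{\dim H_S}$. Summing over $S$ with the homological sign $(-1)^{|S|}$ gives
\[
\chi_q\!\left(C_d^\bullet(\cal H, \k)\right) \;=\; \sum_{S \subseteq [n]} (-1)^{|S|} (1+q)^{\dim H_S} \;=\; \chi(\cal H, q),
\]
which is the definition (\ref{eq:statesum}). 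Combined with $\chi_q(C_d^\bullet) = \chi_q(H_d^\bullet)$ (valid in each $q$-degree separately, since $d$ preserves the $q$-grading and each $q$-graded piece is finite-dimensional), the conclusion follows.

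The main point to address is therefore that $d^2 = 0$. Following the standard Khovanov-cube argument, this decomposes into two claims, one about the unsigned cube and one about the signs. First, every $2$-face of the cube commutes: a face is indexed by $S$ and two elements $r,r' \notin S$, and the two ways around the face give maps $\wedge^\bullet H_S \to \wedge^\bullet H_{S \cup \{r,r'\}}$ which are both induced (by functoriality of $\wedge^\bullet$) from the orthogonal projection $H_S \to H_{S\cup\{r,r'\}}$, since composing orthogonal projections onto nested subspaces $H_{S\cup\{r,r'\}} \subset H_{S\cup r} \subset H_S$ (and likewise through $H_{S\cup r'}$) yields the same orthogonal projection onto the smaller subspace. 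Second, the signs $\varepsilon_{S,r}$ are defined precisely so that on each such face the four signs multiply to $-1$, turning commuting faces into anticommuting ones; this is a direct parity check essentially identical to the one in \cite{Khovanov}. Combining the two, the off-diagonal contributions to $d \circ d$ cancel in pairs, so $d^2 = 0$.

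The only step that might pose a subtlety is confirming that orthogonal projection between the $H_S$'s really is functorial in $S$ in the way used above; once this is observed, the face-commutation and the sign manipulation are both routine, and the Euler characteristic computation is immediate from the graded dimension of an exterior algebra.
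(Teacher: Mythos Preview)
Your proof is correct and, for the Euler-characteristic computation itself, essentially identical to the paper's: both note that the graded dimension of $\wedge^\bullet H_S$ is $(1+q)^{\dim H_S}$, so the alternating sum over $S$ recovers the state sum (\ref{eq:statesum}), and both invoke the standard fact that graded Euler characteristic passes from a finite-dimensional complex to its homology because $d$ has $q$-degree zero.

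The one difference is scope rather than method: you fold the verification of $d^2=0$ into the proof of this proposition, whereas the paper treats that as part of the \emph{definition} preceding it (the signs $\varepsilon_{S,r}$ are introduced precisely to turn the commutative cube into an anticommutative one, so that Definition~\ref{def:main} already presupposes a chain complex). Your face-commutation argument via nested orthogonal projections and the parity check on the $\varepsilon_{S,r}$ are both correct and match what the paper does implicitly in Section~\ref{subsubsec:d}; they are simply not part of the paper's proof of this particular proposition.
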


\begin{proof}
The graded Euler characteristic of the chain complex is the characteristic polynomial
by design, as noted in Section \ref{subsec:cubes}. As the chain groups are finite dimensional
and the differential is degree zero with respect to the second grading, the graded Euler characteristic of the homology is the same.
\end{proof}

Note that the sign assignment for the differentials made use of the ordering
of the hyperplanes. The following lemma states that the end result is, up to isomorphism,
order-independent.

\begin{lemma}\label{thm:permute}
For any permutation $\sigma \in S_n$ 
and arrangement $\cal H= \{V; H_1,...,H_n\}$, let \linebreak
$\cal H_{\sigma}:=\{V; H_{\sigma(1)},...,H_{\sigma(n)}\}$ denote the permuted arrangement.
Then
$$H^\bullet_d(\cal H_{\sigma})\cong H^\bullet_d(\cal H).$$
\end{lemma}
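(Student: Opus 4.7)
The plan is to exhibit an explicit chain isomorphism $C_d^\bullet(\cal H_\sigma) \cong C_d^\bullet(\cal H)$ by rescaling individual summands by signs.  The starting observation is that a subset $T \subseteq [n]$ of labels for $\cal H_\sigma$ corresponds, under the bijection $T \leftrightarrow \sigma(T)$, to a subset of labels for $\cal H$, and since $H^\sigma_T = \bigcap_{t \in T} H_{\sigma(t)} = H_{\sigma(T)}$, the chain groups of the two complexes are canonically identified.  The underlying orthogonal projection maps $d_{S,r}$ also agree under this identification.  The only discrepancy is in the signs $\varepsilon_{S,r}$, which depend on the natural ordering of $[n]$.

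Since $S_n$ is generated by adjacent transpositions and chain isomorphisms compose, it suffices to treat $\sigma = (k,k+1)$.  A short case analysis, comparing for each $s \notin S$ the values $\varepsilon_{S,s} = (-1)^{|\{s' \in S : s' < s\}|}$ and $\varepsilon^\sigma_{\sigma^{-1}(S), \sigma^{-1}(s)} = (-1)^{|\{s' \in S : \sigma^{-1}(s') < \sigma^{-1}(s)\}|}$, shows that the two signs agree, except when $\{k, k+1\} \subseteq S \cup \{s\}$ but $\{k, k+1\} \not\subseteq S$ (equivalently, $s \in \{k, k+1\}$ and the other element lies in $S$), in which case they differ by $-1$.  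Setting
\[
\eta_S = \begin{cases} -1 & \text{if } \{k, k+1\} \subseteq S, \\ \phantom{-}1 & \text{otherwise,} \end{cases}
\]
I would define $\Phi\colon C_d^\bullet(\cal H_\sigma) \to C_d^\bullet(\cal H)$ to act on the summand $\wedge^\bullet H^\sigma_T = \wedge^\bullet H_{\sigma(T)}$ as multiplication by $\eta_{\sigma(T)}$.  The ratio $\eta_S / \eta_{S \cup s}$ is then $-1$ in precisely the discrepant case and $+1$ otherwise, so $\Phi$ intertwines the differentials and is evidently a linear isomorphism.

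The work is almost entirely bookkeeping: enumerating positions of $k$ and $k+1$ relative to $S$ and $s$ to confirm that the sign discrepancies are matched by the rescaling.  There is no conceptual obstacle.  A more abstract alternative is to observe that any two sign assignments making the hypercube of subsets anti-commute differ by a coboundary in the $\mathbb{Z}/2$-cellular cochain complex of the cube (whose relevant cohomology vanishes); this would produce an isomorphism at once but sacrifices the explicit formula for $\eta_S$.
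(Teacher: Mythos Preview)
Your proof is correct and essentially identical to the paper's: reduce to an adjacent transposition $\sigma=(k,k+1)$, observe that the only sign discrepancies occur when $s\in\{k,k+1\}$ and the other element lies in $S$, and fix them by multiplying the summand at $S$ by $-1$ whenever $\{k,k+1\}\subseteq S$. The paper does exactly this (with slightly less bookkeeping shown), so your added cohomological remark at the end is a bonus rather than a deviation.
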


\begin{proof}
Since $S_n$ is generated by transpositions, it is enough to prove the theorem for \linebreak
$\sigma=(i,i+1)$, $i\in\{1,...,n-1\}$.

We prove that the chain complexes $\cal C(\cal H)$ and $\cal C(\cal H_\sigma)$ are isomorphic.
Acting by $\sigma$ does not change the chain groups, it just permutes the
direct summands of a fixed chain group. However, some of the signs for the differentials differ in   $\cal C(\cal H)$ and $\cal C(\cal H_\sigma)$.  Specifically, $\epsilon_{S,r}$ changes if and only if $r=i$ and $(i+1) \in S$ or $r=i+1$ and $i \in S$.

An isomorphism of the chain complexes 
$\Phi: \cal C(\cal H) \to \cal C(\cal H_{\sigma})$ is given by letting
$\Phi$ be multiplication by $(-1)$ on the components $\extp^\bullet H_S$ where
$\{i,i+1\} \subseteq S$, and letting $\Phi$ act by the identity on all other summands.
It is simple combinatorics to check that this map commutes with the differentials,
hence it gives rise to a chain isomorphism.
\end{proof}

\begin{example}\label{ex:baby}{\rm
One could in principle construct a chain complex from the spaces $H_S$ directly, rather than first taking the exterior algebra.  
The resulting complex is less interesting, however, as this simple example will illustrate.
Consider the hyperplane arrangement in $\bb R^3$ consisting of two planes defined by vectors $\nu_1=(1,-1,0)$ and 
$\nu_2=(0,1,-1)$. The cube in this case is a square. The proposed ``simple''
complex,
\begin{center}
\input babyexample.pstex_t 
\end{center}
is acyclic.  However, by taking exterior algebras before flattening the cube, 
we get non-zero homology in homological degree 0,
linearly spanned by the two elements 
$\{x_1x_2x_3, x_1x_3-x_1x_2-x_2x_3\}$.

Note that the graded Euler characteristic of this is in fact $q^3+q^2$, in agreement with the characteristic polynomial.  \qed
}
\end{example}

\smallskip 

We can assign differentials to the $V_S$ spaces the same way:
the complex $\cal C^P_d=\bigoplus_{S \subseteq [n]}\extp^\bullet V_S$ with the natural bi-grading and 
differentials
$$d=\bigoplus_{S \subseteq [n]; r \notin S} \varepsilon_{S,r} d_{S,r}$$
gives rise to homology groups $H^{P\bullet}_d(\cal H)$.

Similarly, consider 
$\cal C^T_d:=\bigoplus_{S \subseteq [n]}\extp^\bullet H_S \otimes \extp^\bullet W_S$
with the triple grading $\deg \extp^i H_S \otimes \extp^j W_S= (|S|, i, j)$ and
differentials
$$d=\bigoplus_{S \subseteq [n]; r \notin S} \varepsilon_{S,r} d_{S,r}\otimes d_{S,r},$$
and call the resulting homology $H^{T\bullet}_d(\cal H)$.

The proof of the theorems regarding the characteristic polynomial (Proposition \ref{prop:CharCat} and Lemma \ref{thm:permute}) can be repeated
word by word to prove the following:
\begin{proposition}
 The cohomologies $H_d^{P\bullet}(\cal H)$ and $H^{T\bullet}_d(\cal H)$ categorify the Poincar\'e polynomial
and the Tutte polynomial, respectively:
$$\chi_q(H_d^{P\bullet}(\cal H))=\pi(\cal H, q), \text{ and } \chi_q(H_d^{T\bullet}(\cal H))=\hat{T}(\cal H, x,y).$$
Furthermore, permuting the vectors in the vector arrangement induces isomorphisms of the chain groups.
\end{proposition}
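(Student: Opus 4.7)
The plan is to mimic the proofs of Proposition \ref{prop:CharCat} and Lemma \ref{thm:permute} in both cases, since the structural ingredients—finite-dimensional chain groups, a differential that preserves all but the homological grading, and hypercube edge sign conventions governed by $\varepsilon_{S,r}$—are identical. First I would verify that the differentials actually give chain complexes: for $\cal C^P_d$ this is immediate because $d_{S,r}$ on $\wedge^\bullet V_S$ is the inclusion induced by $V_S \hookrightarrow V_{S\cup r}$, and two successive such inclusions commute on the nose, so the sign convention $\varepsilon_{S,r}$ anti-commutes the square of the cube and forces $d^2=0$. For $\cal C^T_d$, the differential $d_{S,r}\otimes d_{S,r}$ is a tensor product of two commuting-cube edge maps, so the same $\varepsilon_{S,r}$ makes the total square vanish (no extra Koszul signs arise because both tensor factors shift in the same direction on the cube).

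For the Euler characteristic statements, I would observe that the graded dimension of $\wedge^\bullet V_S$ in the second grading is $(1+q)^{\dim V_S}$, so summing $(-1)^{|S|}$ times this over $S$ recovers the state sum \eqref{eq:statesumpi}. Since the chain groups are finite dimensional and the differential has degree $(1,0)$, passing to homology preserves this Euler characteristic, giving $\chi_q(H_d^{P\bullet}(\cal H)) = \pi(\cal H,q)$. For the Tutte case, the triple grading yields graded dimension
\[
\sum_{i,j} x^i y^j \dim(\wedge^i H_S \otimes \wedge^j W_S) = (1+x)^{\dim H_S}(1+y)^{\dim W_S},
\]
so the triply-graded Euler characteristic of $\cal C^T_d$ matches \eqref{eq:statesumtutte}. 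The differential has degree $(1,0,0)$, hence $\chi_q(H_d^{T\bullet}(\cal H))=\hat{T}(\cal H;x,y)$.

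For permutation invariance, it suffices to handle an adjacent transposition $\sigma=(i,i+1)$. Swapping labels does not affect any of the spaces $V_S$, $H_S$, or $W_S$—the vertex assignments simply get relabeled—and the sign $\varepsilon_{S,r}$ changes precisely when $\{r,r'\}=\{i,i+1\}$ with exactly one of $i,i+1$ in $S$. Exactly as in Lemma \ref{thm:permute}, I would define the isomorphism $\Phi\colon \cal C(\cal H)\to \cal C(\cal H_\sigma)$ to be multiplication by $-1$ on every summand indexed by an $S$ with $\{i,i+1\}\subseteq S$ and the identity otherwise. Then on each affected edge one of the two endpoints picks up a sign, compensating for the flipped $\varepsilon_{S,r}$; on all other edges $\Phi$ acts trivially on both ends. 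This works verbatim for the three complexes $\cal C_d$, $\cal C^P_d$, and $\cal C^T_d$, since $\Phi$ is defined at the level of indexing subsets and so commutes with tensor products of edge maps.

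No step is a genuine obstacle: the only mildly delicate check is confirming that in the Tutte case the differential $d_{S,r}\otimes d_{S,r}$ with the single sign $\varepsilon_{S,r}$ squares to zero and that the correcting isomorphism $\Phi$ above is still compatible with the tensor product of edge maps. Both amount to the observation that both tensor factors of the differential move in the same direction on the cube, so the underlying sign combinatorics is identical to the single-factor case already settled in Section \ref{subsubsec:d}.
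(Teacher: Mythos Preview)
Your proposal is correct and takes essentially the same approach as the paper, which simply states that the proofs of Proposition~\ref{prop:CharCat} and Lemma~\ref{thm:permute} ``can be repeated word by word.'' You have spelled out those repetitions in appropriate detail, including the extra observation that in the Tutte case the tensored edge maps $d_{S,r}\otimes d_{S,r}$ require no additional sign bookkeeping beyond the single $\varepsilon_{S,r}$.
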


\subsubsection{Wedge maps}\label{subsubsec:partial}
We now define a second type of differentials using the same underlying chain groups but shifting the grading. 
This construction has several advantages over the previous one, most notably these chain complexes admit a differential 
graded algebra structure, so the cohomologies are themselves algebras. 

In this section we only construct characteristic and Tutte complexes -- that is, the complexes which use the spaces $H_S$ and $W_S$. The
Poincar\'e complex of the previous section used the spaces $V_S$. As we have seen in Section \ref{sec:linalg}, the wedge maps between the
$V_S$ spaces go in the opposite direction (increasing the size of the set $S$). Later in this section we will construct dg-algebra using the spaces $H_S$ and $W_S$; we note here that it is also possible to define a dg algebra structure using the spaces $V_S$, though we have omitted the 
details of that construction below.

For the characteristic homology, the differential $\partial: \cal C \to \cal C$ is defined using the wedge maps
$w_{S,s}: \extp^\bullet H_S \to \extp^{\bullet+1}H_{S-s}$ explained in 
Section \ref{sec:linalg}, Equation (\ref{eq:wedgemaps}). We set
$$\partial:=\bigoplus_{S \subseteq [n], \, s \in S} w_{S,s}.$$ 
From now on we will denote the maps $w_{S,s}$ by the name $\partial_{S,s}$ as well.
To distinguish between the chain complexes with different differentials when needed,
we will write $\cal C_d$ and $\cal C_\partial$.

Note that the hypercube with vertices $\extp^\bullet H_S$ and edge maps
$\partial_{S,s}$ is anti-commutative by definition, hence $\partial^2=0$.

\begin{remark}{\rm
 The anti-commutativity of the hypercube is not completely obvious, due to the different orthogonal
projections used in the definition of the wedge maps (see Section \ref{sec:linalg}), though it is straightforward to verify
that it is in fact anti-commutative. 
Alternatively, an essentially equivalent way to rephrase the above construction in a way that makes anti commutativity obvious would be
to view the vectors
$\nu_i$ as elements of the linear dual $V^*$, and take for chain groups the dual spaces $H^*_S$.  Then orthogonal projection is 
replaced by restriction of linear maps, making the anti-commutativity of the hypercube more transparent.  On the other hand, in such 
a formulation it is no longer obvious that the wedge maps are well defined -- though again this is straightforward to check -- 
which is why we have chosen the presentation above.}
\end{remark}

Observe that $\deg \partial=(-1,1)$ with respect to the natural bi-grading of the previous section. Thus in this section we will redefine the bi-grading by setting $\deg \extp^i H_S=(|S|+i, i)$. With respect to this grading $\deg \partial=(0,1)$.

Let $H^\bullet_{\partial}$ denote the resulting homology. (Note that the two gradings switched roles: 
now the second degree is the homological degree.) In this construction we never used that the hyperplanes were ordered, so $H^\bullet_\partial$ is order-independent for free.  Note also that there were no sign choices required in the definition of the differential on the complex.

\begin{lemma}
The graded Euler characteristic of $H^\bullet_{\partial}$ is
$$\chi_q(H^\bullet_{\partial})(\cal H)=\bar{\chi}(\cal H, q),$$
where $\bar{\chi}$ is the version of the characteristic polynomial defined in (\ref{eq:statesum2}).
\end{lemma}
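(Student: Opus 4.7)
The plan is to compute the graded Euler characteristic directly from the chain complex, exploiting the fact that $\partial$ is homogeneous of degree $(0,1)$ with respect to the new bi-grading so that passing to homology preserves the Euler characteristic in the first grading.

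First, I would observe that with the redefined bi-grading $\deg \wedge^i H_S = (|S|+i, i)$, the differential $\partial$ has bi-degree $(0,1)$, and all chain groups are finite-dimensional. Therefore the graded Euler characteristic $\chi_q$ (taken with respect to the first grading, with signs coming from the second, homological grading) agrees for the chain complex $\mathcal{C}_\partial$ and for its homology $H^\bullet_\partial$. So it suffices to compute $\chi_q(\mathcal{C}_\partial)$.

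Next, I would decompose $\wedge^\bullet H_S = \bigoplus_{i=0}^{\dim H_S} \wedge^i H_S$, with $\dim \wedge^i H_S = \binom{\dim H_S}{i}$, and note that $\wedge^i H_S$ contributes $(-1)^i q^{|S|+i}$ times its dimension to $\chi_q$. Summing over $S$ and $i$, and applying the binomial identity $\sum_{i\geq 0}(-1)^i \binom{d}{i} q^i = (1-q)^d$, gives
\begin{equation*}
\chi_q(\mathcal{C}_\partial) = \sum_{S\subseteq[n]} q^{|S|} \sum_{i=0}^{\dim H_S}(-1)^i \binom{\dim H_S}{i} q^i = \sum_{S\subseteq[n]} q^{|S|}(1-q)^{\dim H_S},
\end{equation*}
which is exactly $\bar{\chi}(\mathcal{H},q)$ as defined in (\ref{eq:statesum2}).

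This is essentially a bookkeeping argument: the only genuinely nontrivial input was the earlier verification that $\partial^2 = 0$ (so that $H^\bullet_\partial$ makes sense), and the re-grading shift was chosen precisely so that $\partial$ has homological degree $1$ and $q$-degree $0$. There is no real obstacle beyond confirming the grading conventions; the substantive content is that the regrading by $|S|+i$ (rather than just $|S|$) turns the state sum over $S$ and $i$ into $(1-q)^{\dim H_S}$ instead of $(1+q)^{\dim H_S}$, which is exactly the difference between $\chi$ and $\bar{\chi}$.
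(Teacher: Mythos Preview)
Your proof is correct and is essentially an expanded version of the paper's own proof, which reads in its entirety ``Immediate from the definition.'' You have simply written out the bookkeeping that the authors considered obvious: that $\partial$ has bi-degree $(0,1)$ in the shifted grading so the Euler characteristic passes to homology, and that the binomial identity converts $\sum_i (-1)^i q^{|S|+i}\binom{\dim H_S}{i}$ into $q^{|S|}(1-q)^{\dim H_S}$.
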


\begin{proof}
Immediate from the definition.
\end{proof}

As for the Tutte polynomial, there are several possible differentials to chose from. One condition which is convenient to impose is that the cube should be anti-commutative naturally, without the order-dependent sign assignments. One such differential is as follows.

We define the differentials on $\cal C^T$ by
\begin{equation}\label{eq:Tuttepartial}
\partial^T=\bigoplus_{S \subseteq [n], s \in S} \partial^T_{S,s}, \text{ where } \partial^T_{S,s}= w_{S,s}\otimes b_{S,s}.
\end{equation}
With respect to the natural grading, $\deg \partial^T=(-1,1,0)$. We set the new grading convention to
be $\deg \extp^i H_S \otimes \extp^j W_S=(|S|+i, i, j)$. In this grading $\deg \partial^T =(0,1,0)$.
We denote the homology by $H^{T\bullet}_\partial$.

\begin{proposition}
The graded Euler characteristic of $H^{T\bullet}_\partial$ is
 $$\chi_q(H^{T\bullet}_\partial(\cal H))= (-x)^k \hat{T}(\cal H;-\frac{1}{x},-1-x-xy).$$ 
\end{proposition}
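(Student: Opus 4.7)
The plan is a direct multi-variable Euler characteristic computation. Because $\partial^T$ has tri-degree $(0,1,0)$ in the redefined grading $\deg(\wedge^i H_S \otimes \wedge^j W_S)=(|S|+i,i,j)$ — with the second slot being the homological grading — the Euler characteristic alternating in $i$ and weighted by variables $x$ (tracking the first grading) and $y$ (tracking the third) is preserved from $\cal C^T_\partial$ to $H^{T\bullet}_\partial$. Thus it suffices to compute this Euler characteristic on chains and match it against $(-x)^k\hat T(\cal H;-1/x,-1-x-xy)$.

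On the chain side, $\wedge^i H_S \otimes \wedge^j W_S$ has dimension $\binom{\dim H_S}{i}\binom{\dim W_S}{j}$ and contributes $(-1)^i x^{|S|+i} y^j$ to the Euler characteristic. Summing independently over $i$ and $j$ via the binomial theorem gives
$$\chi_{x,y}(\cal C^T_\partial)=\sum_{S\subseteq[n]} x^{|S|}(1-x)^{\dim H_S}(1+y)^{\dim W_S}.$$

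To identify this with the stated right-hand side, I would expand $(-x)^k\hat T(\cal H;-1/x,-1-x-xy)$ using the state sum (\ref{eq:statesumtutte}). Writing $(1-1/x)^{\dim H_S}=(-1)^{\dim H_S}(1-x)^{\dim H_S}/x^{\dim H_S}$ and $(-x-xy)^{\dim W_S}=(-1)^{\dim W_S}x^{\dim W_S}(1+y)^{\dim W_S}$ and collecting signs and powers gives
$$(-x)^k\hat T(\cal H;-1/x,-1-x-xy)=\sum_{S}(-1)^{k+|S|+\dim H_S+\dim W_S}\,x^{k+\dim W_S-\dim H_S}\,(1-x)^{\dim H_S}(1+y)^{\dim W_S}.$$
The matroid identities $\dim H_S+\dim V_S=k$ and $\dim V_S+\dim W_S=|S|$ then give $\dim W_S-\dim H_S=|S|-k$, so the power of $x$ collapses to $x^{|S|}$; and $k+|S|+\dim H_S+\dim W_S=2(|S|+\dim H_S)$, so the overall sign is $+1$. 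The two expressions now coincide.

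The only real obstacle is keeping signs and exponents straight; there is no homological subtlety beyond the standard fact that the Euler characteristic agrees on chains and homology, which holds here because the differential preserves the first and third weight gradings. The identity between the chain-level formula and the right-hand side is, in essence, just the variable substitution $(u,v)\mapsto(-1/x,-1-x-xy)$ combined with the two matroid dimension identities.
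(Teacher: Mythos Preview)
Your proof is correct and is precisely the ``straightforward check'' the paper alludes to without giving details. The computation of the chain-level Euler characteristic, the expansion of the right-hand side, and the use of the dimension identities $\dim H_S+\dim V_S=k$ and $\dim V_S+\dim W_S=|S|$ are all sound, and the justification that the Euler characteristic passes from chains to homology (since $\partial^T$ has degree $(0,1,0)$ and the chain groups are finite-dimensional) is exactly what is needed.
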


\begin{proof}
A straightforward check.
\end{proof}

\subsection{Differential graded algebra structure}\label{subsec:dgalg}
One advantage of the wedge differentials of the previous subsection 
is that the resulting homology groups admit a compatible multiplication.

For the characteristic homology, this multiplication is defined at the chain level as a map
$$
	m: C_S\otimes C_T \rightarrow C_{S\cup T}.
$$
We set $m$ to be $0$ when $S \cap T\neq \emptyset$; for $S \cap T = \emptyset$ and $h \in C_S, \, h'\in C_T$, we set 
$$
	 m(h \otimes h') =h\wedge h'.
$$
Here the wedging takes place inside 
$\extp^\bullet(H_{S\cup T})$, which is well-defined after first using orthogonal projection to send
both $h$ and $h'$ to $\extp^\bullet(H_{S\cup T})$.  If $h$ and $h'$ are homogeneous elements of respective bi-degrees $(|S|+i,i)$ and $(|T|+j,j)$,
then $$\deg m(h \otimes h')= \deg (h \wedge h')= (|S|+|T|+i+j,i+j),$$ so the multiplication
respects both gradings. Note that multiplication is associative due to the fact that a composition of orthogonal projections to progressively 
smaller subspaces is an orthogonal projection.

\begin{proposition}
$(\cal C, \partial, m)$ is a differential graded algebra,
and hence $H_{\partial}^\bullet(\cal H)$ is a graded algebra.
\end{proposition}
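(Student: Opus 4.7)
The plan is to verify the three properties of a dg-algebra: associativity of $m$, the identity $\partial^2=0$, and the graded Leibniz rule
\[
\partial(h\wedge h')=\partial(h)\wedge h'+(-1)^{|h|}h\wedge\partial(h'),
\]
where $|h|$ denotes the homological (second) bi-grading component of $h$, i.e.\ the wedge degree. Associativity and bi-grading compatibility of $m$ are noted in the text (associativity follows from the fact that a composition of nested orthogonal projections is itself an orthogonal projection), and $\partial^2=0$ was already recorded from the anti-commutativity of the wedge hypercube. The real content is therefore the Leibniz identity, together with the observation that the induced multiplication on cohomology inherits associativity and compatibility with the induced grading.

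I would prove the Leibniz rule by case analysis on $S\cap T$. When $S\cap T=\emptyset$ the argument is direct: expand $\partial(h\wedge h')$ as a sum over $r\in S\cup T$ of terms of the form ``(projection of $\nu_r$ to $H_{(S\cup T)-r}$) wedged with $h\wedge h'$'' inside $\wedge^\bullet H_{(S\cup T)-r}$, and split according to whether $r\in S$ or $r\in T$. Two basic facts do the rest: orthogonal projection is a linear map, so by functoriality of $\wedge^\bullet$ it distributes over wedges; and the composition of orthogonal projections to nested subspaces equals the projection to the innermost one. These let me identify the $r\in S$ contributions with $\partial(h)\wedge h'$ and the $r\in T$ contributions with $h\wedge\partial(h')$. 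The sign $(-1)^{|h|}$ in the latter branch is the Koszul sign from commuting the degree-one projected vector past $h$.

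The step I expect to require the most care is a small mismatch in how $h$ (and $h'$) are projected in the two expressions being compared: inside $\partial_{S\cup T,r}(h\wedge h')$ for $r\in T$ the element $h$ has already been projected to $H_{S\cup T}$ before the wedge with the projected $\nu_r$ takes place, while inside $m(h\otimes\partial_{T,r}(h'))$ the element $h$ is only projected to $H_{(S\cup T)-r}$. The resolution is that the difference between these two projections of $h$ lies in the orthogonal complement of $H_{S\cup T}$ inside $H_{(S\cup T)-r}$, which is the one-dimensional subspace spanned precisely by the projection of $\nu_r$ to $H_{(S\cup T)-r}$; the discrepancy is therefore annihilated upon wedging with that same projected vector. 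The symmetric discrepancy for $h'$ in the $r\in S$ branch is handled identically.

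When $S\cap T\neq\emptyset$ both sides of the Leibniz rule must vanish. The left side is zero by definition of $m$. On the right, a term $m(\partial_{S,s'}(h)\otimes h')$ is automatically zero whenever $(S-s')\cap T\neq\emptyset$, so one need only check the case $S\cap T=\{s'\}$; there the distributed projection of $\partial_{S,s'}(h)$ down to $H_{(S-s')\cup T}=H_{S\cup T}\subset H_{s'}$ sends $\nu_{s'}$ to zero since $\nu_{s'}\perp H_{s'}$. The symmetric terms $m(h\otimes\partial_{T,t'}(h'))$ vanish for the same reason. This completes the verification of the Leibniz rule, establishing that $(\mathcal{C},\partial,m)$ is a dg-algebra; consequently $H^\bullet_\partial(\mathcal{H})$ acquires a well-defined associative graded multiplication.
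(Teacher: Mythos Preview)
Your proof is correct and follows the same approach as the paper, which simply asserts that a short computation shows both sides of the Leibniz rule equal $\sum_{r\in S\cup T}\nu_r\wedge h\wedge h'\in\bigoplus_{r\in S\cup T}C_{(S\cup T)-r}$. You are in fact more careful than the paper: you explicitly address the projection mismatch (resolved by noting the discrepancy lies in the span of the projected $\nu_r$ and hence vanishes upon wedging), and you treat the case $S\cap T\neq\emptyset$ separately, neither of which the paper mentions.
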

\begin{proof}
We need to show that the multiplication is compatible with the differential:
 $$\partial(m(h\otimes h'))=m((\partial h)\otimes h') + (-1)^{j} m(h \otimes (\partial h')),$$
where $j=\deg_2(h)$ is the second degree of $h$ (i.e., its exterior algebra degree).
A short computation shows that both sides are equal to 
$\sum_{r \in S\cup T} \nu_r \wedge h \wedge h' \in \bigoplus_{r \in S \cup T} C_{S\cup T -r}$.
\end{proof}

For the Tutte chain groups, we define the multiplication on $\cal C^T_\partial(\cal H)$
in a similar way:
$$
m^T: (\extp^{i_1}H_S \otimes \extp^{j_1} W_S)\otimes
(\extp^{i_2}H_T \otimes \extp^{j_2} W_T) \to (\extp^{i_1+i_2}H_{S\cup T} \otimes \extp^{j_1+j_2} W_{S\cup T}),
$$
\begin{equation}\label{eq:Tuttemult}
m^T((h_1\otimes w_1)\otimes(h_2\otimes w_2))=(h_1\wedge h_2) \otimes (w_1 \wedge w_2) \quad \text{ if } \quad S\cap T=\emptyset,
\end{equation}
and set the multiplication to be zero when $S \cap T\neq \emptyset$. 

\begin{proposition}
The multiplication $m^T$ is compatible with all three gradings and makes $\cal C^T$ into a triply-graded dg-algebra.  As a result, $H^{T\bullet}_\partial$ is a triply-graded algebra.
\end{proposition}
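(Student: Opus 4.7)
The plan is to verify three properties of $m^T$: (i) compatibility with the triple grading, (ii) associativity, and (iii) a graded Leibniz rule with respect to $\partial^T$. Property (i) is immediate from the definition: since $m^T$ vanishes unless $S \cap T = \emptyset$, in the nonzero case $|S \cup T| = |S|+|T|$, so a product of elements of tri-degrees $(|S|+i_1, i_1, j_1)$ and $(|T|+i_2, i_2, j_2)$ lands in $\wedge^{i_1+i_2} H_{S \cup T} \otimes \wedge^{j_1+j_2} W_{S \cup T}$, which has tri-degree equal to the sum. Property (ii) follows as noted for the characteristic case: wedge products are associative in each factor, and a composition of orthogonal projections to nested subspaces equals the projection to the smallest, so $(h_1 \wedge h_2) \wedge h_3 = h_1 \wedge (h_2 \wedge h_3)$ inside $\wedge^\bullet H_{S_1 \cup S_2 \cup S_3}$; the $W$-side is even simpler, as the maps $b_{S,s}$ are inclusions and the relevant $W_S$ all sit inside $W_{S_1 \cup S_2 \cup S_3}$.

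For (iii), I would establish the super-Leibniz rule
\begin{equation*}
\partial^T(x\cdot y) = \partial^T(x)\cdot y + (-1)^{|x|}\, x\cdot \partial^T(y),
\end{equation*}
where $|x|=i+j \pmod 2$ is the total super-degree for $x \in \wedge^i H_S \otimes \wedge^j W_S$. Expanding the left side,
\begin{equation*}
\partial^T\bigl((h_1\wedge h_2)\otimes(w_1\wedge w_2)\bigr) = \sum_{s\in S\cup T} \bigl(\nu_s \wedge h_1 \wedge h_2\bigr)\otimes b_{S\cup T,s}(w_1\wedge w_2),
\end{equation*}
I would split the sum into $s \in S$ and $s \in T$. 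For $s \in S$, the fact that $H_{(S\cup T)-s} \subseteq H_{S-s}$ and that orthogonal projections compose correctly shows this term reproduces $m^T\bigl(\partial^T(h_1\otimes w_1), (h_2\otimes w_2)\bigr)$, modulo the Koszul sign $(-1)^{j_1\cdot 0}$ coming from reading the super-tensor convention of Remark \ref{rmk:super}. For $s \in T$, commuting $\nu_s$ past $h_1$ in the exterior algebra contributes $(-1)^{i_1}$, and the analogous swap on the $W$-side contributes $(-1)^{j_1}$, combining to the desired sign $(-1)^{i_1+j_1}=(-1)^{|x|}$.

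Once (i)–(iii) are established, it follows formally that $(\cal C^T,\partial^T,m^T)$ is a triply-graded dg-algebra, so $H^{T\bullet}_\partial$ inherits a triply-graded algebra structure from $m^T$ and the fact that $\partial^T$ is a graded derivation. The main obstacle I anticipate is careful sign bookkeeping: one must check that the Koszul signs from the super-tensor structure of $\wedge^\bullet H_S \otimes \wedge^\bullet W_S$, the parity of $\partial^T$ (which is $(0,1,0)$, hence odd), and the orders in which $\nu_s$ and the various $w_i$, $h_j$ are placed, all conspire to yield precisely $(-1)^{|x|}$ on the nose. Apart from the signs, every identity reduces to associativity of wedge and compatibility of orthogonal projections, both of which are routine.
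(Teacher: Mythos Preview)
Your overall strategy---check grading compatibility, associativity, and the Leibniz rule by expanding $\partial^T(m^T(x,y))$ and splitting the sum over $s\in S\cup T$ into $s\in S$ and $s\in T$---is exactly what the paper does. However, your sign computation for the Leibniz rule is incorrect. You claim the sign is $(-1)^{i_1+j_1}$, but the correct sign (and the one the paper states) is $(-1)^{i_1}$.

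The error is in your sentence ``the analogous swap on the $W$-side contributes $(-1)^{j_1}$.'' There is no swap on the $W$-side. Recall that $\partial^T_{S,s}=w_{S,s}\otimes b_{S,s}$: on the $H$-factor the differential wedges with $\nu_s$ on the left, but on the $W$-factor the map $b_{S,s}$ is the orthogonal projection $W_{S}\to W_{S-s}$, extended multiplicatively to the exterior algebra. For $s\in T$ one has $w_1\in W_S\subseteq W_{(S\cup T)-s}$, so
\[
b_{S\cup T,s}(w_1\wedge w_2)=b_{S\cup T,s}(w_1)\wedge b_{S\cup T,s}(w_2)=w_1\wedge b_{S\cup T,s}(w_2),
\]
with no reordering whatsoever. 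Meanwhile $m^T\bigl(h_1\otimes w_1,\partial^T_{T,s}(h_2\otimes w_2)\bigr)$ gives $(h_1\wedge\nu_s\wedge h_2)\otimes(w_1\wedge b_{T,s}(w_2))$, again with $w_1$ on the left. The only sign arises from commuting $\nu_s$ past $h_1$ on the $H$-side, yielding $(-1)^{i_1}$. This is also the sign one expects abstractly: the homological grading here is the second coordinate $i$ (the one in which $\partial^T$ has degree $+1$), not the total exterior degree $i+j$. Once you correct the sign to $(-1)^{i_1}$, your argument goes through and matches the paper's.
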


\begin{proof}
 What needs to be verified is that for 
$$h_1\otimes w_1 \in \extp^{i_1}H_S \otimes \extp^{j_1}W_S \quad \text{and} \quad
h_2\otimes w_2 \in \extp^{i_2}H_S \otimes \extp^{j_2}W_S,$$
$$m^T(\partial^T(h_1\otimes w_1)\otimes (h_2 \otimes w_2))+(-1)^{i_1} m^T((h_1\otimes w_1)\otimes \partial^T(h_2\otimes w_2))=
\partial^T(m(h_1\otimes w_1 \otimes h_2 \otimes w_2)).$$
This is a straightforward calculation.
\end{proof}

\subsection{Properties}\label{subsec:props}

\subsubsection{Relations from deletion and restriction}\label{subsec:delres}
The following theorem is a categorification of the deletion-restriction formula (\ref{eq:delres}):

\begin{theorem}\label{thm:les}
There is a short exact sequence of chain complexes of the form
$$
 0 \to C_d^{i-1,j}(\cal H^{H_l}) \stackrel{\iota}{\to} C_d^{i,j}(\cal H)
\stackrel{\pi}{\to} C_d^{i,j}(\cal H - H_l)\to 0.
$$
This induces long exact sequence for $H^\bullet_d$:
\begin{equation}\label{eq:les}
0\to...\to H^{i-1}_d(\cal H^{H_l}) \to H^i_d(\cal H) 
\to H^i_d(\cal H - H_l) \to H^i_d(\cal H^{H_l}) \to...
\end{equation} 
\end{theorem}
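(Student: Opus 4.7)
The plan is to build the short exact sequence from a natural decomposition of $C_d^{\bullet}(\cal H)$ according to whether the index set $S$ contains $l$, and then to invoke the standard zig-zag construction.

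\textbf{Step 1: Decomposition of the chain groups.} For each $i$, split
\[
C_d^{i}(\cal H) \;=\; \bigoplus_{|S|=i,\ l\notin S} \wedge^{\bullet} H_S \ \oplus\ \bigoplus_{|S|=i,\ l\in S} \wedge^{\bullet} H_S.
\]
The first summand is canonically $C_d^{i}(\cal H-H_l)$, since the spaces $H_S$ for $l\notin S$ agree in both arrangements. For the second summand, reindex by $T=S\setminus\{l\}$ of size $i-1$; then $H_{T\cup l}=H_T\cap H_l$ is the intersection of the hyperplanes of $\cal H^{H_l}$ corresponding to $T$, so this summand is canonically $C_d^{i-1}(\cal H^{H_l})$. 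Define $\pi$ to be the projection onto the first summand and $\iota$ to be the inclusion of the second summand (twisted by a sign, see Step 3). As vector spaces, short-exactness is then tautological.

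\textbf{Step 2: $\pi$ is a chain map.} For $h\in \wedge^{\bullet} H_S$ with $l\notin S$, the components of $d(h)$ indexed by $S\cup\{r\}$ with $r\neq l$ lie again in the $l$-avoiding summand and reproduce $d^{\cal H-H_l}(h)$ (the signs $\varepsilon_{S,r}$ coincide under the ordering of $[n]\setminus\{l\}$ inherited from $[n]$, since they only count elements of $S$ less than $r$); the single component at $r=l$ lies in $\ker \pi$.

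\textbf{Step 3: The sign twist on $\iota$.} Comparing the Khovanov-style signs in $\cal H$ versus $\cal H^{H_l}$ for $T$ with $l\notin T$ and $r\notin T\cup\{l\}$,
\[
\varepsilon_{T\cup l,\,r} \;=\; (-1)^{[l<r]}\cdot \varepsilon^{\cal H^{H_l}}_{T,r},
\]
because the left side counts the extra element $l$ exactly when $l<r$. Define $\iota$ on the $T$-summand by the inclusion $\wedge^{\bullet}H^{H_l}_T=\wedge^{\bullet}H_{T\cup l}\hookrightarrow C_d^{i}(\cal H)$ multiplied by $\sigma(T):=(-1)^{|\{t\in T\,:\,t>l\}|}$. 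This sign is engineered so that $\sigma(T\cup r)/\sigma(T)=(-1)^{[l<r]}$, which exactly cancels the discrepancy above, and the orthogonal projection maps $d^{\cal H^{H_l}}_{T,r}$ and $d_{T\cup l,r}$ coincide on $H_{T\cup l}$. Hence $d\circ\iota=\iota\circ d^{\cal H^{H_l}}$.

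\textbf{Step 4: The long exact sequence.} The short exact sequence of chain complexes $0\to C_d^{\bullet-1}(\cal H^{H_l})\xrightarrow{\iota} C_d^{\bullet}(\cal H)\xrightarrow{\pi} C_d^{\bullet}(\cal H-H_l)\to 0$ yields the long exact sequence (\ref{eq:les}) by the usual snake lemma construction; the connecting homomorphism $H_d^{i}(\cal H-H_l)\to H_d^{i}(\cal H^{H_l})$ is, as is standard, induced (up to the sign twist $\sigma$) by the edge maps $d_{S,l}:\wedge^{\bullet}H_S\to\wedge^{\bullet}H_{S\cup l}$ for $l\notin S$.

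The only real subtlety is the sign verification in Step 3; the rest of the argument is a direct identification of summands. The existence of a consistent choice of sign $\sigma$ (a coboundary on the Boolean cube in the appropriate sense) is precisely what makes the anticommutative cube structure compatible with splitting off the $l$-coordinate.
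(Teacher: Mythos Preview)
Your proof is correct and follows essentially the same approach as the paper: decompose the cube according to whether $l\in S$, take $\pi$ as the obvious projection, and twist the inclusion $\iota$ by the sign $(-1)^{|\{t\in T:\,t>l\}|}$ to repair the sign discrepancy caused by the $\varepsilon_{S,r}$. Your Step~3 in fact spells out the verification $\sigma(T\cup r)/\sigma(T)=(-1)^{[l<r]}$ more explicitly than the paper does, and your identification of the connecting homomorphism with the edge maps $d_{S,l}$ is an extra (correct) detail the paper defers to later computations.
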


\begin{proof}
The proof is along the same lines as the proofs of the corresponding theorems in \cite{HelmeRong} 
and \cite{JassoRong}, and we recall the basic points here.

We want to define chain maps $\iota$ and $\pi$ satisfying 
\begin{equation}\label{eq:sesd}
0 \to \bigoplus_{S \subseteq [n]-l} \extp^\bullet(H_S\cap H_l) \stackrel{\iota}{\to}
\bigoplus_{T \subseteq [n]} \extp^\bullet H_T \stackrel{\pi}{\to}
\bigoplus_{U \subseteq [n]-l} \extp^\bullet H_U \to 0.
\end{equation}
Note that 
$$\bigoplus_{T \subseteq [n]} \extp^\bullet H_T=
\bigoplus_{S \subseteq [n]-l} \extp^\bullet H_{S\cup l} \oplus \bigoplus_{U \subseteq [n]-l} \extp^\bullet H_U,$$
and $H_S \cap H_l=H_{S\cup l}$. The essential idea is to set $\iota$ to be the natural inclusion and 
$\pi$ the natural projection map with respect to this decomposition. However, this is only correct up to sign:
$\pi$ commutes with the differential $d$, but the signs $\varepsilon_{S,r}$
cause a commutativity issue with $\iota$. In order to fix this,
we replace the inclusion $\iota$ by the map $\iota'$ by setting
$\iota'=\bigoplus_{S\subseteq [n]-l} \iota'_S$,
where $\iota'_S$ is the natural inclusion of the component if the number of elements 
$\{s \in S, s>l\}$ is even, and multiplication by $(-1)$ on the component if this
number is odd.
\end{proof}

For the second choice of differentials, there is a similar short exact sequence of chain complexes inducing a 
long exact sequence on cohomology groups.  
Let $[1]$ denote a shift by $(1,0)$ of the bi-grading (remember that the homological grading is 
the second one in this case).

\begin{theorem}\label{thm:lesdel}
There is a short exact sequence of chain complexes
\begin{equation}\label{eq:sesdel}
 0 \to \cal C_\partial(\cal H - H_l) \stackrel{\iota}{\to} \cal C_\partial(\cal H)
\stackrel{\pi}{\to} \cal C_\partial(\cal H^{H_l})[1]\to 0,
\end{equation}
giving rise to a long exact sequence 
\begin{equation}\label{eq:lesdel}
... \to H^i_\partial(\cal H - H_l) \to H^i_\partial(\cal H) \to H^{i}_{\partial}(\cal H^{H_l})[1] \to H^{i+1}_\partial(\cal H - H_l) \to ...
\end{equation}
\end{theorem}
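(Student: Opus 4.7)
The plan is to exploit the decomposition of the underlying chain groups of $\cal C_\partial(\cal H)$ according to whether $l$ belongs to the indexing subset $S$. Setting
$$\cal A = \bigoplus_{S \subseteq [n],\, l \notin S} \wedge^\bullet H_S \quad\text{and}\quad \cal B = \bigoplus_{S \subseteq [n],\, l \in S} \wedge^\bullet H_S,$$
we have $\cal C_\partial(\cal H) = \cal A \oplus \cal B$ as graded vector spaces. Because $\partial$ decreases $|S|$ via the maps $w_{S,s}$ for $s \in S$, the subspace $\cal A$ is preserved: whenever $l \notin S$, each summand $w_{S,s}(x)$ lies in $\wedge^{\bullet+1} H_{S-s}$ with $l \notin S-s$. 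So $\cal A$ is a subcomplex, yielding a short exact sequence
\begin{equation*}
0 \to \cal A \xrightarrow{\iota} \cal C_\partial(\cal H) \xrightarrow{\pi} \cal B \to 0,
\end{equation*}
with $\iota$ the inclusion, $\pi$ the projection, and $\cal B$ inheriting the quotient differential, which on $\wedge^\bullet H_S$ (for $l \in S$) retains only the terms $w_{S,s}$ with $s \neq l$. Crucially, unlike in the proof of Theorem \ref{thm:les}, no sign correction is needed here because $\partial$ itself was defined without the $\varepsilon_{S,r}$ factors; this is why the deletion piece (rather than the restriction piece) is naturally the subcomplex for this differential.

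Next I would identify the two outer terms with the complexes named in the statement. The identification $\cal A \cong \cal C_\partial(\cal H - H_l)$ is essentially tautological: subsets $S \subseteq [n]$ with $l \notin S$ correspond to subsets of $[n]\setminus\{l\}$, the vector space $H_S$ is the same in $\cal H$ and $\cal H - H_l$, and the wedge maps $w_{S,s}$ appearing only involve the vectors $\nu_s$ with $s \neq l$. For $\cal B \cong \cal C_\partial(\cal H^{H_l})[1]$, I would match $\wedge^\bullet H_{S' \cup \{l\}}$ (for $S' \subseteq [n]\setminus\{l\}$) with the chain group indexed by $S'$ in the restriction complex, using that $H_{S' \cup \{l\}} = H_{S'} \cap H_l$ is precisely the intersection of the restricted hyperplanes indexed by $S'$; the grading shift $[1]$ accounts for the fact that $|S' \cup \{l\}| = |S'| + 1$, and is consistent with the grading convention $\deg \wedge^i H_S = (|S|+i, i)$.

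The only verification worth highlighting is the compatibility of wedge differentials on the two sides of this last identification. In the quotient of $\cal C_\partial(\cal H)$, the map $w_{S' \cup \{l\},s}$ wedges with the orthogonal projection of $\nu_s$ to $H_{(S' \cup \{l\}) - s}$, whereas in the restriction complex the analogous map wedges with $\nu_s^{\mathrm{res}}$ (the projection of $\nu_s$ to $H_l$) further projected to $H_{S'-s}^{\mathrm{res}} = H_{(S'-s) \cup \{l\}}$. These agree because the composition of orthogonal projections onto a chain of nested subspaces equals the single orthogonal projection onto the smallest one, and $H_{(S' \cup \{l\}) - s} \subseteq H_l$. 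Once these identifications are in place, the long exact sequence (\ref{eq:lesdel}) is the standard zig-zag sequence associated to a short exact sequence of chain complexes. I do not anticipate any substantive obstacle; the main takeaway is that the absence of sign factors in $\partial$ makes this construction noticeably cleaner than its $d$-differential counterpart.
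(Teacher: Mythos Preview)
Your proposal is correct and follows essentially the same approach as the paper: the paper's proof simply asserts that the natural inclusion and projection maps give the short exact sequence and leaves the verification to the reader, while you have spelled out precisely those details (that $\cal A$ is a subcomplex because $\partial$ decreases $|S|$, the identifications with the deletion and restriction complexes, and the compatibility of the wedge maps under projection). Your observation that no sign adjustment is needed, in contrast to Theorem~\ref{thm:les}, is also in line with the paper's treatment.
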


\begin{proof}
It is easy to check that Equation (\ref{eq:sesdel}) with the natural inclusion and projection maps is indeed 
a short exact sequence
of chain complexes (not of dg-algebras) for $(\cal C, \partial)$.  
This implies that there is a long exact sequence 
for the homology $H^\bullet_\partial$. 
\end{proof}

\begin{remark}
 \rm{Note that in the short exact sequence of chain complexes, $\iota$ is a map
of dg-algebras, but $\pi$ isn't. Correspondingly in the long exact sequence the
maps induced by $\iota$ are ``algebra maps'' in the sense that they fit into commutative
squares with the corresponding multiplication maps. On the other hand, the maps induced
by $\pi$ and the ones arising from the snake lemma have no such properties.}
\end{remark}

The proofs above relied crucially on the fact that for a subset $S \subseteq [n]-l$,
the space at hypercube vertex $S$ in the chain complex of $\cal H^{H_l}$ (denote this by 
$H_S^{H_l}$) is $H_S\cap H_l$.  This vector space can be identified with $H_{S\cup l}$, 
which participates in the chain complex of $\cal H$ at vertex $S\cup l$.  

Before we state the deletion-restriction theorems for the Poincar\'e and Tutte cohomologies, 
let us determine what the analogous hypercube relationships are for the $W$ and $V$ spaces.

For $S \subseteq [n]-l$, let $W_S^{H_l}$ denote the ``$W_S$-space'' of the vector arrangement associated to $\cal H^{H_l}$. 
That is, $W_S^{H_l}$ is the space of linear dependencies between $\{P(\nu_s): s \in S\}$, where $P$ stands for the 
orthogonal projection onto $H_l$. Note that $w_{s_1} P(\nu_{s_1})+...+w_{s_p} P(\nu_{s_p})=0$
if and only if $w_{s_1} \nu_{s_1}+...+w_{s_p} \nu_{s_p}=w_l \nu_l$. Let $\phi$ be the map sending
the vector $\underline{w}$ with non-zero coordinates $w_{s_1},...,w_{s_p}$ to $\tilde{\underline{w}}+(0,...,0,-w_l,0,...,0)$,
where $\tilde{\underline{w}}$ is the same as $\underline{w}$ but with a 0 inserted in the $l$-th coordinate.
Then $\phi$ is a canonical isomorphism $W_S^{H_l} \cong W_{S\cup l}$. 

For the $V_S$ spaces, the analogous statement is somewhat different, since for $l \notin S$, \linebreak
$\dim \extp^\bullet V_{S\cup l} =2 \dim \extp^\bullet V_S^{H_l}$, as long as $\nu_l \neq 0$.  There is a natural inclusion 
$\iota_1: \extp^\bullet V_S^{H_l} \hookrightarrow \extp^\bullet V_{S\cup l}$
as vector spaces, as follows. Recall that $V_S^{H_l}$ is spanned by $\{P(\nu_s): s \in S\}$, where $P$ is the orthogonal projection to $H_l$.
For $v\in \extp^\bullet V_S^{H_l}$ given by $v=P(\nu_{s_1})\wedge...\wedge P(\nu_{s_i})$, define $\iota_1(v)=\nu_l \wedge \nu_{s_1}\wedge...\wedge \nu_{s_i}$. 
It is a simple exercise to check that
$\iota_1$ is well-defined, injective,  and that $\deg \iota_1=(1,1)$ (working with the grading convention $\deg(\extp^i V_S)=(|S|,i)$). 
Furthermore, note that $P(\nu_s)$ is an element of $V_{S\cup l}$ for each $s \in S$.
Define $\iota_2:\extp^\bullet V_S^{H_l} \hookrightarrow \extp^\bullet V_{S\cup l}$ to be the identity on each $P(\nu_S)$ and extend multiplicatively to the
exterior algebra to get a different injection, with $\deg \iota_2=(1,0)$. It is easy to see that the images of $\iota_1$ and $\iota_2$ only intersect at 0.  Hence 
$\iota_1 \oplus \iota_2: \extp^\bullet V_S^{H_l}\oplus \extp^\bullet V_S^{H_l} \rightarrow
\extp^\bullet V_{S\cup l} $ is an isomorphism of vector spaces, as long as $\nu_l \neq 0$.

\begin{theorem}\label{thm:lespoinc}
When $\nu_l \neq 0$, there is a short exact sequence of chain complexes 
$$
 0 \to \cal C^P_d(\cal H^{H_l})\{1\}[1]\oplus \cal C^P_d(\cal H^{H_l})[1] \stackrel{\iota_1\oplus \iota_2}{\longrightarrow} \cal C^{P}_d(\cal H)
\stackrel{\pi}{\to} \cal C^{P}_d(\cal H - H_l)\to 0,
$$
where $[1]$ stands for shifting the first degree up by 1, and $\{1\}$ denotes shifting the second (exterior algebra) degree up by 1.  
This induces long exact sequence for $H^{P\bullet}_d$ of the form
\begin{equation}\label{eq:lespoinc}
0\to...\to H^{P,i-1}_d(\cal H^{H_l})\{1\} \oplus H^{P,i-1}_d(\cal H^{H_l}) \to H^{P,i}_d(\cal H) 
\to H^{P,i}_d(\cal H - H_l) \to H^{P,i}_d(\cal H^{H_l}) \to... 
\end{equation} 
where $H^{P,i-1}_d(\cal H^{H_l})\{1\}$ denotes a shift of the second (exterior algebra) degree up by 1.  
\end{theorem}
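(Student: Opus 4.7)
The approach will mirror the proof of Theorem \ref{thm:les}, but with the complication that the vertex-space at $S\cup l$ in the cube for $\cal H$ does not identify naively with the corresponding vertex-space for $\cal H^{H_l}$: instead, when $\nu_l\neq 0$, it splits as two copies of the latter via the isomorphism $\iota_1\oplus\iota_2$ constructed in the paragraph preceding the theorem.

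I would first record the vector space decomposition
$$
\cal C^{P}_d(\cal H) \;=\; \bigoplus_{U \subseteq [n]\setminus l} \wedge^\bullet V_U \;\oplus\; \bigoplus_{S \subseteq [n]\setminus l} \wedge^\bullet V_{S\cup l}.
$$
The first summand is literally $\cal C^{P}_d(\cal H - H_l)$, and I would take $\pi$ to be the projection onto it. By the hypothesis $\nu_l\neq 0$, the isomorphism $\iota_1\oplus\iota_2$ identifies $\cal C^{P}_d(\cal H^{H_l})\{1\}[1]\oplus \cal C^{P}_d(\cal H^{H_l})[1]$ with the second summand; the shifts are forced by the bidegrees $(1,1)$ of $\iota_1$ and $(1,0)$ of $\iota_2$, which are thereby made degree-preserving. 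Exactness at the level of graded vector spaces is then automatic.

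The content of the argument is to check that $\iota_1\oplus\iota_2$ and $\pi$ are chain maps; once this is done, the long exact sequence follows from the standard snake-lemma zigzag. For $\pi$, verification is immediate: for $v\in \wedge^\bullet V_U$ with $l\notin U$, the edge summand $\varepsilon_{U,l} d_{U,l}(v)$ of $d_{\cal H}(v)$ lies in the $l$-containing piece and is killed, while each $\varepsilon_{U,r} d_{U,r}(v)$ with $r\neq l$ stays in the first summand with exactly the sign used by the differential of $\cal C^{P}_d(\cal H-H_l)$. For $\iota_1$ and $\iota_2$ the commutativity with each edge map holds up to signs: one checks directly that $d_{S\cup l,r}\circ \iota_1$ and $\iota_1\circ d^{H_l}_{S,r}$ both send $P(\nu_{s_1})\wedge\cdots\wedge P(\nu_{s_i})$ to $\nu_l\wedge\nu_{s_1}\wedge\cdots\wedge\nu_{s_i}$ in $\wedge^\bullet V_{S\cup l\cup r}$, and that $\iota_2$ intertwines edge maps by the analogous (even simpler) calculation. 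The principal --- indeed essentially only --- obstacle is the sign bookkeeping: the sign $\varepsilon_{S\cup l,r}$ appearing in $\cal C^{P}_d(\cal H)$ differs from the sign $\varepsilon_{S,r}$ appearing in $\cal C^{P}_d(\cal H^{H_l})$ by a factor of $-1$ exactly when $l<r$. Just as in Theorem~\ref{thm:les}, this is corrected by precomposing each $S$-component of $\iota_1$ and $\iota_2$ with the sign $(-1)^{|\{s\in S:\,s>l\}|}$, after which all squares commute and the short exact sequence is a genuine sequence of chain complexes.
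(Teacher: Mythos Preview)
Your proof is correct and follows essentially the same approach as the paper's. The paper's proof is a two-line reference back to Theorem~\ref{thm:les}, noting that one uses $\iota_1\oplus\iota_2$ in place of $\iota$ and that the same sign adjustment $(-1)^{|\{s\in S:\,s>l\}|}$ is needed; you have simply written out the details that the paper leaves implicit.
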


\begin{proof}
Identical to the proof of Theorem \ref{thm:les}, but using $\iota_1 \oplus \iota_2$ in place of $\iota$.
Again, $\pi$ commutes with $d_{S,r}$, but on account of the sign assignments 
$\varepsilon_{S,r}$, $\iota_1\oplus \iota_2$ needs to be adjusted in the same way as in 
the proof of Theorem \ref{thm:les}.
\end{proof}

\begin{theorem}\label{thm:lestutte}
When $\nu_l \neq 0$, there is a short exact sequence of chain complexes 
$$
 0 \to \cal C^T_d(\cal H^{H_l})[1] \stackrel{\iota}{\to} \cal C^{T}_d(\cal H)
\stackrel{\pi}{\to} \cal C^{T}_d(\cal H - H_l)\to 0,
$$
where $[1]$ denotes a shift of the first degree; inducing a
long exact sequence for $H^{T\bullet}_d$ of the form
\begin{equation}\label{eq:lestutte}
0\to...\to H^{T,i-1}_d(\cal H^{H_l}) \to H^{T,i}_d(\cal H) 
\to H^{T,i}_d(\cal H - H_l) \to H^{T,i}_d(\cal H^{H_l}) \to...
\end{equation} 
\end{theorem}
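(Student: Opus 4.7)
The plan is to follow the template of Theorem \ref{thm:les} (and its variant Theorem \ref{thm:lespoinc}), adapted to the tensor product chain groups $\wedge^\bullet H_S \otimes \wedge^\bullet W_S$. First, I would split
$$
\cal C^T_d(\cal H) \;=\; \bigoplus_{S \subseteq [n]-l} \wedge^\bullet H_{S \cup l} \otimes \wedge^\bullet W_{S \cup l} \;\oplus\; \bigoplus_{U \subseteq [n]-l} \wedge^\bullet H_U \otimes \wedge^\bullet W_U
$$
according to whether the indexing subset contains $l$. The second summand is manifestly $\cal C^T_d(\cal H - H_l)$. For the first summand, the identification $H_{S\cup l}=H_S\cap H_l = H_S^{H_l}$ combined with the canonical isomorphism $\phi:W_S^{H_l}\xrightarrow{\cong} W_{S\cup l}$ constructed in the paragraph preceding Theorem \ref{thm:lespoinc} identifies it with $\cal C^T_d(\cal H^{H_l})[1]$, where the shift $[1]$ absorbs the extra homological degree contributed by the element $l\in S\cup l$.

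Next I would define $\pi$ to be the natural projection onto the $l\notin T$ summand, and $\iota$ to be the sign-twisted inclusion of the $l\in T$ summand. Exactly as in the proof of Theorem \ref{thm:les}, the issue is that the signs $\varepsilon_{S,r}$ appearing in the differential $d=\bigoplus \varepsilon_{S,r}\, d_{S,r}\otimes d_{S,r}$ are computed inside $\cal H$ rather than $\cal H^{H_l}$; the discrepancy is corrected by taking $\iota = \bigoplus_S \iota_S$ with $\iota_S$ the component inclusion multiplied by $(-1)^{|\{s\in S:s>l\}|}$. With this sign twist, $\iota$ commutes with the signed differential and $\pi$ does so tautologically.

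The verification that $\iota$ and $\pi$ intertwine $d$ reduces to two ingredients. On the $H$-factor it is exactly the combinatorial sign check already carried out in Theorem \ref{thm:les}. On the $W$-factor one must check that $\phi$ intertwines the inclusion/projection maps $d_{S,r}^{H_l}:W_S^{H_l}\to W_{S\cup r}^{H_l}$ (for $r\in [n]-l$) with the corresponding maps $d_{S\cup l,r}:W_{S\cup l}\to W_{S\cup l\cup r}$ of $\cal H$; this is the main routine check of the proof, and it follows directly from the explicit formula $\phi(\underline{w})=\tilde{\underline{w}}+(0,\ldots,-w_l,\ldots,0)$, since adding a multiple of the $l$-th standard basis vector is compatible with extending and restricting the coordinate patterns that define the $W_S$ spaces. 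The hypothesis $\nu_l\neq 0$ is needed precisely for the restriction $\cal H^{H_l}$ to be a genuine hyperplane arrangement in $H_l$ and for $\phi$ to be an isomorphism.

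Short exactness at the level of chain complexes is then immediate from the direct sum decomposition, and the induced long exact sequence (\ref{eq:lestutte}) is the standard homology long exact sequence produced by the snake lemma. I expect the main (small) obstacle to be bookkeeping the two degree shifts — the homological shift $[1]$ coming from enlarging $S$ to $S\cup l$, and the compatibility of $\phi$ with the orthogonal projection maps among the $W$-spaces — but no substantive new idea beyond Theorem \ref{thm:les} is required.
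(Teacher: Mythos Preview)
Your proposal is correct and follows essentially the same approach as the paper, which simply states that the proof of Theorem~\ref{thm:les} applies verbatim. You have in fact spelled out more detail than the paper does---in particular the role of the isomorphism $\phi:W_S^{H_l}\xrightarrow{\cong}W_{S\cup l}$ and the need for $\nu_l\neq 0$---all of which is accurate and matches the discussion the paper places just before Theorem~\ref{thm:lespoinc}.
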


\begin{proof}
The proof of Theorem \ref{thm:les} applies verbatim.
\end{proof}

\begin{theorem}\label{thm:lesdeltutte}
There is a short exact sequence of (tri-graded) chain complexes
$$
 0 \to \cal C^T_\partial(\cal H - H_l) \stackrel{\iota}{\to} \cal C^T_\partial(\cal H)
\stackrel{\pi}{\to} \cal C^T_\partial(\cal H^{H_l})[1]\to 0,
$$
where $[1]$ denotes a degree 
shift by $(1,0,0)$ in the tri-grading. 
Thus there is an induced long exact sequence of cohomology groups:
\begin{equation}\label{eq:lesdeltutte}
...\to H^{T,i}_\partial(\cal H - H_l) \to H^{T,i}_\partial(\cal H) \to H^{T,i}_{\partial}(\cal H^{H_l})[1]\to H^{T,i+1}_\partial(\cal H - H_l) \to...
\end{equation}
\end{theorem}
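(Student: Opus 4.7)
The plan is to parallel the proof of Theorem \ref{thm:lesdel}, accounting for the extra factor $\wedge^\bullet W_S$ and the identifications of $W$-spaces under deletion and restriction spelled out in the discussion preceding Theorem \ref{thm:lespoinc}. First I split the chain group of $\cal H$ according to whether $l\in S$:
\begin{equation*}
\cal C^T_\partial(\cal H) = \bigoplus_{l\notin S} \bigl(\wedge^\bullet H_S\otimes \wedge^\bullet W_S\bigr) \;\oplus\; \bigoplus_{l\in S} \bigl(\wedge^\bullet H_S\otimes \wedge^\bullet W_S\bigr).
\end{equation*}
Using the identifications $H_S^{\cal H-H_l}=H_S$ and $W_S^{\cal H-H_l}\cong W_S$ (insert zero in the $l$-th coordinate), the first summand is identified with $\cal C^T_\partial(\cal H-H_l)$. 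Using $H_{S'}^{\cal H^{H_l}}=H_{S'\cup l}$ and the isomorphism $\phi\colon W_{S'}^{\cal H^{H_l}}\stackrel{\sim}{\to} W_{S'\cup l}$ described before the theorem, the second summand is identified with $\cal C^T_\partial(\cal H^{H_l})[1]$; the shift $(1,0,0)$ records the fact that $|S'\cup l|=|S'|+1$ raises the first grading component by one.

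I take $\iota$ to be the inclusion of the $l\notin S$ summand and $\pi$ to be the projection onto the $l\in S$ summand. Exactness of the short sequence is then immediate. For $\iota$ to be a chain map I need the subspace $\bigoplus_{l\notin S}\wedge^\bullet H_S\otimes \wedge^\bullet W_S$ to be preserved by $\partial^T=\bigoplus_{s\in S}w_{S,s}\otimes b_{S,s}$: this is clear, since removing some $s\in S$ from a set that misses $l$ still yields a subset missing $l$, and under the identifications the structure maps match those of $\cal H-H_l$, as the underlying spaces and inner products coincide. For $\pi$, the induced differential on the quotient retains only the components with $s\neq l$. One then checks that, under the identification $H_{S'\cup l}=H_{S'}^{\cal H^{H_l}}$, the wedge map $w_{S'\cup l,s}$ equals $w_{S',s}^{\cal H^{H_l}}$, because successive orthogonal projection to $H_l$ and then to $H_{(S'-s)\cup l}\subseteq H_l$ coincides with direct orthogonal projection to $H_{(S'-s)\cup l}$; a parallel verification shows that $\phi$ intertwines $b_{S'\cup l,s}$ with $b_{S',s}^{\cal H^{H_l}}$.

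The main obstacle is this last intertwining check on the $W$-side, since it involves comparing orthogonal projections between the spaces of linear dependencies of two different vector arrangements ($\cal H$ and its restriction $\cal H^{H_l}$). This reduces to a short linear-algebra computation using the explicit form of $\phi$ (inserting $-w_l$ in the $l$-th coordinate) together with the observation that the relevant orthogonal projections act by zeroing out appropriate coordinates, exploiting that both ambient inner products are induced from the standard one on $\k^n$. Once $\iota$ and $\pi$ are verified as chain maps, the snake lemma applied to this short exact sequence of tri-graded complexes produces the long exact sequence \eqref{eq:lesdeltutte}; no sign corrections are needed here, in contrast with the proof of Theorem \ref{thm:les}, because $\partial^T$ carries no order-dependent signs.
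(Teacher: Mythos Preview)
Your overall structure—splitting $\cal C^T_\partial(\cal H)$ according to whether $l\in S$ and taking $\iota$, $\pi$ to be the resulting inclusion and projection—is exactly the paper's intended argument (the paper's proof simply refers back to Theorem~\ref{thm:lesdel}). The $H$-side verification and the observation that no sign adjustments are needed are both correct.

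The gap is on the $W$-side. Your assertion that the orthogonal projections $b_{S,s}$ ``act by zeroing out appropriate coordinates'' is false: the projection $W_S\to W_{S-s}$ is taken \emph{inside} the subspace $W\subset\k^n$, and simply setting $w_s=0$ need not land back in $W$. As a consequence, $\phi$ is not an isometry and does not intertwine the $b$-maps. Concretely, take $\cal V=\{\k;\,1,1,1\}$ with $l=3$. The restriction $\cal H^{H_3}$ is the arrangement of two zero vectors in the zero space, so $W^{H_3}_{\{1,2\}}=\k^2$ and $b^{H_3}_{\{1,2\},1}(w_1,w_2)=(0,w_2)$. On the other side $\phi(w_1,w_2)=(w_1,w_2,-w_1-w_2)$, and orthogonal projection of $\phi(1,0)=(1,0,-1)$ onto $W_{\{2,3\}}=\operatorname{span}(0,1,-1)$ yields $\tfrac12(0,1,-1)$, whereas $\phi\bigl(b^{H_3}_{\{1,2\},1}(1,0)\bigr)=\phi(0,0)=0$. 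So $b_{\{1,2,3\},1}\circ\phi\neq\phi\circ b^{H_3}_{\{1,2\},1}$, and the map $\pi$ you describe is \emph{not} a chain map under the identification via $\phi$. The paper's one-line proof glosses over this point entirely; to make the argument go through you need either a different identification of the quotient with $\cal C^T_\partial(\cal H^{H_l})$ (one compatible with both the inclusions and the inner products on the $W$-spaces) or a separate argument that such a chain isomorphism exists.
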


\begin{proof}
Same as the proof of Theorem \ref{thm:lesdel}.
\end{proof}

\subsubsection{Factorization property}
For two vector arrangements $\cal V=\{V^k;\nu_1,...,\nu_n\}$ and $\cal V'=\{V'^l; \nu'_1,...,\nu'_m\}$,
the {\it product arrangement} is defined to be the vector arrangement
$$\cal V \times \cal V'=\{V\times V'; \nu_1\times 0,...,\nu_m\times 0,0\times\nu'_1,...,0\times\nu'_m\}.$$

For the associated hyperplane arrangements $\cal H=\{V^k; H_1,...,H_n\}$
and $\cal H'=\{V'^l; H_1',...,H_m'\}$, the
product arrangement is the hyperplane arrangement given by
$$\cal H \times \cal H'=\{V \times V' ;\;
H_1 \times V',...,H_n \times V', V \times H_1',...,
V \times H_m'\}.$$
\begin{theorem}\label{thm:kunneth} The characteristic homology of the product arrangement is
the tensor product of the characteristic homologies of the factors:
$$H_d^\bullet(\cal H \times \cal H')\cong H_d^\bullet(\cal H) \otimes H_d^\bullet(\cal H'),
\hskip.2cm H_\partial^\bullet(\cal H \times \cal H')\cong H_\partial^\bullet(\cal H) \otimes H_\partial^\bullet(\cal H').
$$
\end{theorem}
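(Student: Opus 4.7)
The plan is to produce a chain-level isomorphism $\cal C(\cal H\times\cal H')\cong \cal C(\cal H)\otimes \cal C(\cal H')$ compatible with either differential, then invoke the algebraic Künneth formula over the field $\k$.

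First I would identify the chain groups. Index the hyperplanes of $\cal H\times\cal H'$ by $[n+m]$ so that $1,\ldots,n$ correspond to the $H_i\times V'$ and $n+1,\ldots,n+m$ to the $V\times H'_{j}$. Then subsets $T\subseteq[n+m]$ biject with pairs $(S,S')$ via $T=S\sqcup(n+S')$, and one checks directly that $H_T(\cal H\times\cal H')=H_S(\cal H)\times H'_{S'}(\cal H')$. Combined with the superalgebra isomorphism $\wedge^\bullet(H_S\oplus H'_{S'})\cong\wedge^\bullet H_S\otimes\wedge^\bullet H'_{S'}$ of Remark \ref{rmk:super}, summing over $T$ yields a natural bi-graded isomorphism $\Phi:\cal C(\cal H\times\cal H')\to \cal C(\cal H)\otimes \cal C(\cal H')$.

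Next I would verify compatibility of $\Phi$ with the two differentials. For the $d$-differential, each cube edge of $\cal H\times\cal H'$ is of one of two types: adding an element $r\in[n]\setminus S$ (which corresponds to $d_{S,r}\otimes 1$) or adding an element $n+r'$ with $r'\in[m]\setminus S'$ (which should correspond to $1\otimes d_{S',r'}$). The main bookkeeping is the sign: with our ordering, $\varepsilon_{T,r}=\varepsilon_{S,r}$ for the first type, while for the second type $|\{t\in T:t<n+r'\}|=|S|+|\{s'\in S':s'<r'\}|$, giving $\varepsilon_{T,n+r'}=(-1)^{|S|}\varepsilon_{S',r'}$. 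This is precisely the Koszul sign $(-1)^{\deg_1}$ appearing in the tensor-product chain complex $d\otimes 1+(-1)^{\deg}1\otimes d'$, so $\Phi$ intertwines the differentials. For the $\partial$-differential there are no auxiliary sign choices; instead the Koszul sign arises intrinsically from the superalgebra structure, because $(\nu'_{s'}\wedge)(h\otimes h')=(-1)^{\deg h}h\otimes(\nu'_{s'}\wedge h')$ inside $\wedge^\bullet(H_S\oplus H'_{S'})$. A short computation shows $\partial_{T,s}$ for $s\in S$ maps to $\partial_{S,s}\otimes 1$ and $\partial_{T,n+s'}$ for $s'\in S'$ maps to $(-1)^{\deg_1}1\otimes\partial_{S',s'}$, matching the tensor-product differential.

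Finally, since $\cal C(\cal H)$ and $\cal C(\cal H')$ are complexes of $\k$-vector spaces, the Künneth theorem gives $H^\bullet(\cal C\otimes\cal C')\cong H^\bullet(\cal C)\otimes H^\bullet(\cal C')$, and combining with $\Phi$ yields the theorem in both cases. I expect the main obstacle to be the sign bookkeeping in the $d$ case, i.e., carefully matching $\varepsilon_{T,\cdot}$ in the product cube with the Koszul sign convention on $\cal C(\cal H)\otimes \cal C(\cal H')$; the $\partial$ case is essentially immediate once the superalgebra identification is in hand.
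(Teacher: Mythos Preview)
Your proposal is correct and follows essentially the same route as the paper: identify the chain groups via $H_T=H_S\times H'_{S'}$ and the exterior-algebra isomorphism, then check that under this identification the product differential becomes the tensor-product differential with the appropriate Koszul sign ($(-1)^{|S|}$ for $d$, $(-1)^{j}$ for $\partial$). The paper states these sign identities and leaves them as a direct computation, whereas you spell out the $\varepsilon$-bookkeeping for $d$ and the super-commutativity source of the sign for $\partial$; conversely, the paper also checks that the multiplication on $\cal C_\partial$ is compatible with the tensor product (so that the $\partial$-isomorphism is one of algebras), which you omit but which is not part of the theorem as stated.
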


\begin{proof}
The product arrangement has $n+m$ hyperplanes,
and subsets of $[n+m]$ are in one-to-one correspondence with pairs of subsets $S \subseteq [n]$
and $T \subseteq [m]$. 
Note that $$\bigcap_{s \in S} H_s \times V' = H_S \times V', \; \text{ and }\;
(H_S \times V') \cap (V \times H_T) = H_S \times H_T,$$ so the
components of the chain complex for the product arrangement are products
of those for $\cal H$ and $\cal H'$. In other words, the component in the
product arrangement corresponding to the pair of subsets $S,T$ is
$H_{S,T}= H_S \times H_T$. Then

$$\cal C ({\cal H} \times {\cal H'})= \bigoplus_{S\in [n],\, T \in [m]} \extp^\bullet H_{S,T}
\cong \bigoplus_{S\in [n],\, T \in [m]} \extp^\bullet H_S \otimes \extp^\bullet H_T
= \cal C(\cal H) \otimes \cal C(\cal H'),$$
as bi-graded vector spaces, with both grading conventions.

Regarding the differentials, for $h \in \extp^j H_S$ and $h' \in \extp^{j'} H_T$, we need to
check that $$d_{\cal H \times \cal H'} (h \otimes h') =d_{\cal H}(h) \otimes h' +(-1)^{|S|} h \otimes d_{\cal H'}(h'),$$
and that $$\partial_{\cal H \times \cal H'} (h \otimes h') =\partial_{\cal H}(h) \otimes h' +(-1)^{j} h \otimes \partial_{\cal H'}(h').$$
This is verified by a direct computation in both cases.

It remains to show that the algebra structure on $H^\bullet_\partial(\cal H \times \cal H')$
is the (super) tensor product of the algebra structures on the factors: for $a,b \in \cal C(\cal H)$ and 
$a',b' \in \cal C(\cal H')$ we want 
that 
$$m_{\cal H \times \cal H'}(a \otimes a', b \otimes b')= (-1)^{\deg_2 a \deg_2 b} m(a,b) \otimes m(a',b'),$$
where $\deg_2$ denotes the second (exterior algebra) degree.
This is straightforward from the definition of multiplication.
\end{proof}

The essential ingredient of the proof above was that $H_{S,T}=H_S\times H_T$. It is straightforward from the
definitions that this holds true for the $V_{S,T}$ and $W_{S,T}$ spaces associated to the product arrangement 
as well: $V_{S,T}=V_S \times V_T$ and $W_{S,T}=W_S \times W_T$. Hence the proof can be repeated without change to produce similar
``K\"unneth Theorems'' for the Poincar\'e and Tutte cohomologies.

\begin{proposition}
For the Poincar\'e and Tutte cohomologies of the product arrangement, we have 
$$H^{P\bullet}(\cal H \times \cal H')\cong H^{P\bullet}(\cal H) \otimes H^{P\bullet}(\cal H'),$$
$$H^{T\bullet}_d(\cal H \times \cal H')\cong H^{T\bullet}_d(\cal H) \otimes H^{T\bullet}_d(\cal H'),$$
$$H^{T\bullet}_\partial(\cal H \times \cal H')\cong H^{T\bullet}_\partial(\cal H) \otimes H^{T\bullet}_\partial(\cal H'),$$
where the first two are isomorphisms of bi- and tri-graded vector spaces, and the last is an isomorphism of tri-graded algebras. \qed 
\end{proposition}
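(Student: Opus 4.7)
The plan is to mimic the proof of Theorem \ref{thm:kunneth} for each of the three cohomology theories. The key structural input, explicitly noted immediately before the proposition, is that the factorization $H_{S,T}=H_S\times H_T$ extends to $V_{S,T}=V_S\times V_T$ and $W_{S,T}=W_S\times W_T$. Combined with the superalgebra isomorphism $\wedge^\bullet(U\oplus U')\cong \wedge^\bullet U\otimes \wedge^\bullet U'$ from Remark \ref{rmk:super}, this gives identifications
$$\cal C^P_d(\cal H\times\cal H')\cong \cal C^P_d(\cal H)\otimes \cal C^P_d(\cal H'), \quad \cal C^T_?(\cal H\times\cal H')\cong \cal C^T_?(\cal H)\otimes \cal C^T_?(\cal H')$$
for $?\in\{d,\partial\}$, as (bi-) or (tri-)graded vector spaces, after matching the cardinality degree via $|S\sqcup T|=|S|+|T|$ and splitting the exterior-algebra degree across the two tensor factors.

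Next, I would verify compatibility of the differentials. For the $d$-differentials the argument is literally the one from Theorem \ref{thm:kunneth}: every edge of the hypercube for $\cal H\times\cal H'$ either enlarges $S$ by some $r\in[n]\setminus S$ or enlarges $T$ by some $r\in[m]\setminus T$, never both, so one obtains a Leibniz rule
$$d_{\cal H\times\cal H'}(x\otimes y)=d_{\cal H}(x)\otimes y+(-1)^{|S|}x\otimes d_{\cal H'}(y),$$
and a short bookkeeping check shows that the sign conventions $\varepsilon_{(S,T),r}$ reproduce the Koszul sign $(-1)^{|S|}$. For the Tutte wedge differential $\partial^T=\bigoplus w_{S,s}\otimes b_{S,s}$ the analogous Leibniz rule holds because $\nu_s\times 0$ lies in the first factor of $V\times V'$ (with a symmetric statement for $0\times\nu'_{s}$), so each summand either acts only on the $\cal H$-tensor slot or only on the $\cal H'$-tensor slot, with Koszul sign governed by the exterior-algebra degree in the first slot.

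Finally, for the algebra statement on $H^{T\bullet}_\partial$, I would check at the chain level that for homogeneous $a,b\in\cal C^T_\partial(\cal H)$ and $a',b'\in\cal C^T_\partial(\cal H')$,
$$m_{\cal H\times\cal H'}\bigl((a\otimes a')\otimes(b\otimes b')\bigr)=(-1)^{\deg_2 a'\cdot\deg_2 b}\,m_{\cal H}(a\otimes b)\otimes m_{\cal H'}(a'\otimes b').$$
This is immediate once one observes that for disjoint subset pairs the wedge product on $\wedge^\bullet H_{S\cup S',T\cup T'}\otimes \wedge^\bullet W_{S\cup S',T\cup T'}$ decouples across the $\cal H$- and $\cal H'$-factors (since orthogonal projection onto a product subspace is the product of orthogonal projections), and the superalgebra sign of Remark \ref{rmk:super} produces exactly $(-1)^{\deg_2 a'\cdot \deg_2 b}$. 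The main obstacle is purely bookkeeping: tracking the Koszul signs in the tri-graded Tutte setting where both the $H$- and $W$-exterior algebras contribute. Since every map in sight acts on a single tensor factor at a time and preserves the product decomposition of $H_{S,T}$, $V_{S,T}$, and $W_{S,T}$, no new ideas are required beyond those already used in Theorem \ref{thm:kunneth}.
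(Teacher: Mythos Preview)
Your proposal is correct and follows exactly the approach the paper intends: the paper's own proof is simply the remark preceding the proposition that $V_{S,T}=V_S\times V_T$ and $W_{S,T}=W_S\times W_T$, so the argument of Theorem \ref{thm:kunneth} carries over verbatim. Your write-up just spells out that repetition, including the Leibniz-rule check for each differential and the super-tensor sign for the multiplication on $\cal C^T_\partial$; note that in the Tutte setting the relevant Koszul sign involves the total exterior degree (the sum of the $H$- and $W$-exterior degrees), which you correctly flag as the only bookkeeping subtlety.
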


\subsubsection{Gale duality and Tutte homology}\label{subsubsec:GaleAndTutte}
It is a well-known fact that Gale duality switches the variables of the Tutte polynomial.
In other words, if $\cal H^\vee$ is the Gale dual arrangement to $\cal H$, then
$\hat{T}(\cal H^\vee,x,y)=\hat{T}(\cal H, y,x)$. This is a direct consequence of
Remark \ref{rm:defGale}. In this section we consider the relationship between the Tutte 
homology of an arrangement and its Gale dual.

Recall that for Tutte homology we had some freedom in choosing the differentials; in fact
our definition of boundary maps for the Tutte complex $\cal C^T=\bigoplus_{S \subseteq [n]}\extp^\bullet H_S \otimes \extp^\bullet W_S$ was one of four equally natural choices:
$$\partial_1\!= \! \! \! \bigoplus_{S\subseteq [n], s \in S} \! \! \! w_{S,s}\otimes b_{S,s}, \quad
\partial_2\!=\! \! \! \bigoplus_{S\subseteq [n], s \in S} \! \! \! b_{S,s}\otimes c_{S,s}, \quad
\partial_3\!=\! \! \! \bigoplus_{S\subseteq [n], r \notin S} \! \! \! d_{S,r}\otimes w_{S,r}, \quad
\partial_4\!=\! \! \! \bigoplus_{S\subseteq [n], r \notin S} \! \! \! c_{S,r}\otimes d_{S,r}.
$$
(We used the map $\partial_1$ in our earlier definitions).  With respect to the natural triple grading, these maps are of the following degrees:
$$\deg \partial_1=(-1,1,0), \quad \deg \partial_2=(-1,0,-1), \quad \deg \partial_3=(1,0,1), \quad 
\text{and} \quad \deg \partial_4=(1,-1,0).$$
To construct four chain complexes, we set $\deg \extp^i H_S \otimes \extp^j W_S$ to be
$$(|S|+i,i,j), \quad (|S|-j, i, j), \quad (|S|-j, i, j), \quad \text{and} \quad (|S|+i, i,j),$$
respectively. In these conventions, the differentials are of degrees
$$(0,1,0), \quad (0,0,-1), \quad (0,0,1), \quad \text{and} \quad (0,-1,0). $$

Of these differentials, $\partial_1$ and $\partial_4$ are linear duals of each other, as are 
$\partial_2$ and $\partial_3$. On the other hand, $\partial_1$ is related to $\partial_3$ by Gale 
duality, and similarly $\partial_2$ is Gale dual to $\partial_4$. Let us demonstrate what we mean by this on $\partial_1$ and $\partial_3$.

As explained in Remark \ref{rm:defGale}, if $\cal H^\vee$ is the Gale dual arrangement to $\cal H$, then $H_S^\vee=W_{S^c}$ and $W_S^\vee=H_{S^c}$, where $S^c$ is the complement of the set $S$
in $[n]$.

So we have an isomorphism (as vector spaces) $\varphi : \cal C^T(\cal H) \to \cal C^T(\cal H^\vee)$,
where $\varphi$ sends the component $\extp^\bullet H_S \otimes \extp^\bullet W_S$ isomorphically (by switching the 
tensor factors) to $\extp^\bullet W_S \otimes \extp^\bullet H_S$. The latter is the component of  $\cal C^T(\cal H^\vee)$
corresponding to the subset $S^c \subseteq [n]$.

The isomorphism $\varphi$ intertwines the differential $\partial_1$ on $C^T(\cal H)$ with the differential 
$\partial_3$ on $C^T(\cal H^\vee)$, i.e. for $x \in \extp^\bullet H_S \otimes \extp^\bullet W_S$,
$\varphi(\partial_1(x))=\partial_3(\varphi(x))$. So $\varphi$ is an isomorphism 
of chain complexes $\cal C^T_{\partial_1}(\cal H) \to \cal C^T_{\partial_3}(\cal H^\vee)$, except for the fact that it does not respect the grading: it sends the component of degree $(|S|+i,i,j)$ in $\cal C^T_{\partial_1}(\cal H)$ to the component of degree $(n-(|S|+i), j,i)$ in $\cal C^T_{\partial_3}(\cal H^\vee)$.

There is a dg-algebra structure on all four chain complexes. For $\cal C^T_{\partial_1}$ this was defined in Section \ref{subsec:dgalg}. For $\cal C^T_{\partial_2}$ the same definition of multiplication works. For $\cal C^T_{\partial_3}$ 
and $\cal C^T_{\partial_4}$ multiplication is defined in the following way. For subsets $S$ and $T$ of $[n]$,
multiplication is a map 
$$m: (\extp^\bullet H_S \otimes \extp^\bullet W_S)\otimes (\extp^\bullet H_T \otimes \extp^\bullet W_T)
\to \extp^\bullet H_{S\cap T} \otimes \extp^\bullet W_{S \cap T}.$$
If $S \cup T \neq [n]$ then $m$ is defined to be 0, otherwise $m$ is ``wedging'':
$m(h \otimes w \otimes h' \otimes w')=(h \wedge h') \otimes (w \wedge w')$, where we use
the natural inclusions to interpret $h$ and $h'$ as elements of $\extp^\bullet H_{S\cap T}$,
and orthogonal projections to interpret $w$ and $w'$ as elements of $\extp^\bullet W_{S \cap T}$.
We leave it to the reader to check that $m$ is compatible with the differentials $\partial_3$
and $\partial_4$.

As $(S \cup T)^c=S^c \cap T^c$, and $S \cap T=\emptyset$ if and only if $S^c \cup T^c=[n]$,
$\varphi$ is not only compatible with the differentials but also an algebra homomorphism (by 
an easy verification). Hence $\varphi$ descends to an algebra isomorphism on the homology.

When we consider signed arrangements in Section \ref{sec:signed}, we will see that the Gale duality statements for Tutte homology 
become even cleaner, though there will no longer be a dg-algebra structure on chain groups.

\subsection{Examples}

\subsubsection{Graphical Arrangements}\label{graphconst}
 
Important examples of central hyperplane arrangements are known  as graphical arrangements, associated to a finite graph.  
We compute a few examples of homology groups  $H_d^\bullet(\cal H)$ and $H_\partial^\bullet(\cal H)$ associated to such arrangements 
in this section.  The homology theories $H_d^\bullet$ and $H^{T,\bullet}_d$, when restricted to graphical arrangements, may be 
seen as odd versions of the graph homologies considered in \cite{HelmeRong} and \cite{JassoRong}, respectively. In contrast, 
the restrictions of the homology theories $H_\partial^\bullet(\cal H)$ and $H_\partial^T$ to graphs seem quite different 
from ones considered by those authors. 

To a finite directed graph $G$ with vertex set $V(G)=\{v_1,...,v_k\}$ and ordered edge set $E(G)$, we associate a
vector arrangement $\cal V(G)=\{\k^k; \{\nu_e\}_{e \in E(G)}\}$, consisting of $|E(G)|$ vectors, as follows.  If an edge $e \in E(G)$
starts at vertex $v_i$ and ends at $v_j$, the corresponding vector in the arrangement $\cal V(G)$
is $\nu_e=x_i-x_j$.
Let us denote the hyperplane arrangement arising from $\cal V(G)$ by $\cal H(G)$. 
The hyperplane arrangements that arise from graphs via this construction
are called graphical arrangements, and the characteristic polynomial of hyperplane arrangements specializes to the chromatic polynomial of 
graphs when restricted to graphical arrangements.

For a subset $S\subseteq E(G)$, denote by $G_S$ the subgraph which contains all the vertices of $G$ but only the edges
in $S$. Note that then by construction the dimension of $H_S$ equals the number of connected components of $G_S$, 
and the dimension of $W_S$ equals the rank of the cycle space of $G_S$
(if $G_S$ is planar then this is equal to the number of bounded faces). 

It is sometimes convenient to consider the hyperplane arrangement associated to a graph as living 
in a slightly smaller ambient vector space.
Note that the line given by the equation $x_1=...=x_k$ is always included in
the intersection $H_{[n]}$ of all the hyperplanes in $\cal H(G)$. 
So we may consider a graphical vector arrangement or hyperplane arrangement
modulo this subspace. We denote these arrangements by 
$\bar{\cal V}(G)$ and $ \bar{\cal H}(G)$; the hyperplane arrangement then consists of 
$|E(G)|$ hyperplanes living in
$\k^{k-1}$. In this case the dimension of the space $H_S$ is one less than the number of connected components of $G_S$.

One advantage of viewing the hyperplane arrangement associated to a graph as living in this smaller space is that planar graph duality corresponds 
to Gale duality: for a connected planar graph $G$ and $G^*$ its planar dual,  $\bar{\cal H}(G^*)=\bar{\cal H}(G)^\vee$.
Similarly, for any graph $G$ the Tutte polynomial of this associated arrangement equals the Tutte polynomial of the graph: $T(\bar{\cal H}(G); x,y)=T(G;x,y)$.

\begin{lemma}
 For the empty arrangement $\cal H_0^k$ of no hyperplanes in $V=\k^k$,
$$H^0_d(\cal H_0^k)\cong \extp^\bullet \k^k, \text{ and } H^i_d(\cal H_0^k) = 0 \text{ for } i\neq 0.$$
On the other hand, $$H^\bullet_\partial(\cal H_0^k)=\extp^\bullet \k^k$$
as bi-graded algebras.
\end{lemma}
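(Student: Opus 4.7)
The proof is essentially an unpacking of the definitions, so the plan is very short. The point is that when $\cal H_0^k$ has no hyperplanes at all, the indexing set $[n]$ equals $\emptyset$, so the hypercube consists of a single vertex corresponding to the unique subset $S=\emptyset$. Since the intersection over the empty index set is the whole ambient space, $H_\emptyset = V = \k^k$, and the only chain group is $C_\emptyset = \wedge^\bullet \k^k$.

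Next I would observe that both differentials are empty sums. For the $d$-differential, $d^i = \bigoplus_{|S|=i,\, r\notin S} \varepsilon_{S,r} d_{S,r}$, and there are no subsets $S$ of $\emptyset$ together with an element $r\notin S$ in $[n]=\emptyset$; so $d=0$. The same argument applies to $\partial = \bigoplus_{S\subseteq[n],\, s\in S} w_{S,s}$: there is no $s \in S \subseteq \emptyset$. Hence in both cases the homology coincides with the chain groups.

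For $H_d^\bullet$ this yields the first claim: the single chain group sits in homological degree $|S|=0$, giving $H_d^0(\cal H_0^k) \cong \wedge^\bullet \k^k$ and $H_d^i(\cal H_0^k) = 0$ for $i \neq 0$. For $H_\partial^\bullet$, the regrading $\deg(\wedge^i H_S) = (|S|+i,\, i)$ reduces on $S=\emptyset$ to $(i,i)$, so as a bi-graded vector space $H_\partial^\bullet(\cal H_0^k) = \wedge^\bullet \k^k$.

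The one point that requires a (trivial) check beyond bookkeeping is the algebra statement. The multiplication from Section \ref{subsec:dgalg} takes $C_S \otimes C_T \to C_{S\cup T}$ and is nonzero only when $S \cap T = \emptyset$, in which case $m(h\otimes h') = h \wedge h'$ after orthogonal projection. With $S=T=\emptyset$ there is nothing to project and $S\cap T = \emptyset$, so $m$ is simply the wedge product on $\wedge^\bullet V$. This matches the standard exterior algebra multiplication on $\wedge^\bullet \k^k$, giving the algebra isomorphism. The only conceptual obstacle here, if one can call it that, is recognising that the lemma really is a base case whose content is entirely in the convention that $H_\emptyset = V$ and that a sum over an empty set is zero; there is no actual computation to perform.
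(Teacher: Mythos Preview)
Your proof is correct and is exactly the unpacking of definitions that the paper has in mind; the paper's own proof is simply the one line ``This is straightforward from the definitions.'' You have supplied precisely those details, including the observation that the multiplication reduces to the wedge product on $\wedge^\bullet V$.
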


\begin{proof}
This is straightforward from the definitions. 
\end{proof}

The following statement is the analogue of the computation for trees done in \cite{HelmeRong}.
The proof is essentially the same, so we only provide a sketch.  In the statement below, hyperplanes $\{H_i,\hdots,H_n\}$ are said to be 
linearly independent if their associated normal vectors 
$\{\nu_1,...,\nu_n\}$ are linearly independent.

\begin{proposition}
For a hyperplane arrangement with a maximal number of linearly independent hyperplanes 
$\cal H_n=\{\k^{n}; H_1,...,H_n\}$, 
$H^0_d(\cal H_n)=\k\{n\},$ and all other homology groups are zero.
\end{proposition}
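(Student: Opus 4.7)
The plan is to induct on $n$, applying the deletion–restriction long exact sequence of Theorem \ref{thm:les} with $l=n$. The base case $n=0$ is trivial: the empty arrangement in $\k^0$ has chain complex just $\k$ in bi-degree $(0,0)$, so $H^0_d=\k\{0\}$ and higher groups vanish. For the inductive step, the main geometric input is that $\nu_n\notin U:=\mathrm{span}(\nu_1,\ldots,\nu_{n-1})$, a consequence of linear independence.

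First I compute $H^\bullet_d(\cal H_n^{H_n})$. The restriction is an arrangement in $H_n\cong\k^{n-1}$ with normals $P_{H_n}(\nu_i)$ for $i<n$. The orthogonal projection $U\to H_n$ has kernel $U\cap\k\nu_n=0$, so it is an isomorphism of $(n-1)$-dimensional spaces and the projected normals remain linearly independent. Hence $\cal H_n^{H_n}\cong\cal H_{n-1}$, and by induction $H^0_d(\cal H_n^{H_n})=\k\{n-1\}$ with higher groups zero. Next I compute $H^\bullet_d(\cal H_n-H_n)$. Since each $H_i$ with $i<n$ contains $U^\perp$, the deleted arrangement factors as $\{U;\nu_1,\ldots,\nu_{n-1}\}\times\{U^\perp;\emptyset\}$. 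The Künneth theorem (Theorem \ref{thm:kunneth}), together with the inductive hypothesis applied to the first factor and the empty-arrangement lemma applied to the second, gives
\begin{equation*}
H^0_d(\cal H_n-H_n)\cong\k\{n-1\}\otimes\wedge^\bullet\k = \k\{n-1\}\oplus\k\{n\},
\end{equation*}
with higher groups vanishing. Plugging into the long exact sequence collapses it to
\begin{equation*}
0\to H^0_d(\cal H_n)\to\k\{n-1\}\oplus\k\{n\}\xrightarrow{f}\k\{n-1\}\to H^1_d(\cal H_n)\to 0,
\end{equation*}
with $H^i_d(\cal H_n)=0$ for $i\geq 2$.

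The hard part will be showing the connecting map $f$ is surjective. Since $d$ preserves the exterior algebra grading, $f$ annihilates the $\k\{n\}$ summand, so it suffices to check $f$ is nonzero on the $\k\{n-1\}$ summand. I trace the snake lemma: let $\omega$ generate the one-dimensional space $\wedge^{n-1}U$, viewed inside $\wedge^{n-1}H_\emptyset\subseteq C^0(\cal H_n)$. For $r<n$ the map $P_{H_r}|_U:U\to H_r$ has kernel $\k\nu_r$ (since $\nu_r\in U$), hence rank $n-2$, so its $(n-1)$-th exterior power vanishes and $d_{\emptyset,r}(\omega)=0$; this both confirms that $\omega$ is a cycle in $\cal C(\cal H_n-H_n)$ representing the $\k\{n-1\}$ generator and shows that $d(\omega)$ in $\cal C(\cal H_n)$ lies entirely in the $S=\{n\}$ summand. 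That remaining component $\pm P_{H_n}(\omega)\in\wedge^{n-1}H_n$ is nonzero because $P_{H_n}|_U:U\to H_n$ is an isomorphism, and under the identification $\wedge^{n-1}H_n=\wedge^{n-1}H_\emptyset^{\cal H_n^{H_n}}$ it represents the image of $[\omega]$ under the connecting map and a generator of $H^0_d(\cal H_n^{H_n})=\k\{n-1\}$. Therefore $f|_{\k\{n-1\}}$ is an isomorphism, yielding $H^0_d(\cal H_n)=\k\{n\}$ and $H^1_d(\cal H_n)=0$. The whole argument hinges on the rank of $P_{H_r}|_U$ dropping for $r<n$ but not for $r=n$ — precisely the dichotomy between $\nu_r\in U$ and $\nu_n\notin U$.
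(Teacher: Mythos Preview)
Your proof is correct and follows essentially the same approach as the paper: induction using the deletion--restriction long exact sequence, with $\cal H_n^{H_n}\cong\cal H_{n-1}$ and $\cal H_n-H_n\cong\cal H_{n-1}\times\k$, then showing the connecting map is surjective so the long exact sequence splits. The paper's proof is sketchier, simply asserting that ``working through the snake lemma one can see that $\gamma(h,0)=h$''; your explicit identification of the $\k\{n-1\}$ generator as $\omega\in\wedge^{n-1}U$ and the rank computation for $P_{H_r}|_U$ (distinguishing $r<n$ from $r=n$) is exactly the computation the paper leaves to the reader.
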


\begin{proof}
We use induction. The case of $n=0$ is trivial.
Note that $$\cal H_n^{H_n}=\cal H_{n-1} \; \text{ and } \; \cal H_n -H_n= \cal H_{n-1} \times \k,$$
where $\k$ stands for the empty arrangement in a 1-dimensional $\k$-vector space. Hence
so $$H^i_d(\cal H_n-H_n)=H^i_d(\cal H_{n-1})\otimes \extp^\bullet \k=H^i_d(\cal H_{n-1}) \oplus H^i_d(\cal H_{n-1})\{1\}.$$
For each $i \geq 0$ we have
$$...\to H^i_d(\cal H_n) \to H^i_d(\cal H_{n-1}) \oplus H^i_d(\cal H_{n-1})\{1\} \stackrel{\gamma}{\to} H^i_d(\cal H_{n-1}) \to...$$
where $\gamma$ is the transition map arising from the snake lemma. Working through the snake lemma one can see that for 
$h \in H^i_d(\cal H_{n-1})$, $\gamma(h,0)=h$. Hence $\gamma$ is surjective and the long exact sequence falls apart to split short
exact sequences, implying the result. 
\end{proof}

For $H_\partial$ the total dimension is the same, only the grading differs:
\begin{proposition}
$H^0_\partial(\cal H)=\k,$ and all other homology groups are zero.
\end{proposition}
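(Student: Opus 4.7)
The plan is to reduce to the one-hyperplane case by invoking the Künneth formula (Theorem \ref{thm:kunneth}), and then to compute that case by hand. First, since $\nu_1,\ldots,\nu_n$ are linearly independent in the $n$-dimensional space $\k^n$, they form a basis, so after a change of basis I may assume that $\nu_i$ is the $i$-th standard basis vector and $H_i = \{x_i = 0\}$. Under this identification, $\cal H_n$ is precisely the product of $n$ copies of the one-hyperplane arrangement $\cal H_1 = \{\k; \{0\}\}$, since each coordinate hyperplane is pulled back from a single factor of $\k^n = \k \times \cdots \times \k$.

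Second, by Theorem \ref{thm:kunneth}, I obtain a bi-graded algebra isomorphism
$$H^\bullet_\partial(\cal H_n) \cong H^\bullet_\partial(\cal H_1)^{\otimes n},$$
so the problem reduces entirely to understanding $H^\bullet_\partial(\cal H_1)$.

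Third, I compute $H^\bullet_\partial(\cal H_1)$ directly from the definition. The hypercube $\cal C_\partial(\cal H_1)$ has only two vertices: at $S = \emptyset$, $\wedge^\bullet H_\emptyset = \wedge^\bullet \k$ is two-dimensional, spanned by $1$ and $\nu_1$; at $S = \{1\}$, $\wedge^\bullet H_{\{1\}} = \wedge^\bullet \{0\} = \k$. The unique nontrivial component of $\partial$ is $\wedge^0 H_{\{1\}} \to \wedge^1 H_\emptyset$, which sends the generator to $\nu_1$ and is thus an isomorphism $\k \xrightarrow{\sim} \k$. The surviving cohomology is therefore one-dimensional, generated by $1 \in \wedge^0 H_\emptyset$ and concentrated in bi-degree $(0,0)$.

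Combining these steps, $H^\bullet_\partial(\cal H_n)$ is the $n$-fold tensor product of a one-dimensional algebra concentrated in bi-degree $(0,0)$, which is again $\k$ in bi-degree $(0,0)$, as claimed. There is no substantive obstacle: the only real content is the product decomposition of $\cal H_n$, which is immediate from linear independence, and a trivial two-term complex computation for $\cal H_1$. One could alternatively mimic the proof of the preceding proposition via the deletion-restriction long exact sequence of Theorem \ref{thm:lesdel}, but the Künneth reduction is far cleaner because it avoids analyzing the connecting homomorphism.
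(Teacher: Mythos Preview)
Your K\"unneth approach is genuinely different from the paper's, which proceeds by induction on $n$ via the deletion--restriction long exact sequence of Theorem~\ref{thm:lesdel} (precisely the route you dismiss at the end). In the paper's argument one uses that $\cal H_n^{H_n}$ is again an arrangement of type $\cal H_{n-1}$ and that $\cal H_n - H_n \cong \cal H_{n-1} \times \k$, then checks that the connecting homomorphism (``wedging with $\nu_n$'') is injective, so the long exact sequence breaks into short exact pieces.

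However, your argument has a real gap. The product decomposition $\cal H_n \cong \cal H_1^{\times n}$ holds only when the normal vectors $\nu_i$ are mutually \emph{orthogonal}: in any product arrangement $\cal V \times \cal V'$ the two families of vectors $(\nu_i,0)$ and $(0,\nu_j')$ are orthogonal by construction, whereas the proposition concerns an arbitrary collection of $n$ linearly independent vectors in $\k^n$. Your sentence ``after a change of basis I may assume $\nu_i = e_i$'' does not close this gap, because the differential $\partial$ is built from orthogonal projections $P_{H_{S-s}}(\nu_s)$ with respect to the fixed inner product on $\k^n$; a non-orthogonal linear automorphism does not preserve these projections, so it is not a priori a chain map. (By contrast, the paper's product decomposition $\cal H_n - H_n \cong \cal H_{n-1}\times \k$ is a genuine orthogonal splitting, the $\k$ factor being the line orthogonal to $\nu_1,\ldots,\nu_{n-1}$.)

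The gap can be filled: the remark following the definition of $\partial$ explains that the complex has an equivalent description with chain groups $\wedge^\bullet H_S^*$ and wedge maps given by restriction of linear functionals rather than orthogonal projection; in that formulation no inner product enters, so up to isomorphism $\cal C_\partial$ depends only on the hyperplanes. With that lemma in hand your K\"unneth argument goes through and is indeed tidier than the inductive one. But that lemma is not stated in the paper as a result you can cite, and it deserves at least a sentence of justification.
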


\begin{proof}
The proof is very similar to the one above, let us point out the differences only. Due to the different grading convention,
$H_\partial^0(\k)=\k$ as well as $H_\partial^1(\k)=\k$. Thus, $H^i_\partial(\cal H_n -H_n) \cong H^i_\partial(\cal H_{n-1})\oplus H^{i-1}_\partial(\cal H_{n-1})$. 
By Theorem \ref{thm:lesdel} we then have
$$...\to H^i_\partial(\cal H_{n-1})\oplus H^{i-1}_\partial(\cal H_{n-1}) \to H^i_\partial(\cal H_n) \to H^i_\partial(\cal H_{n-1})[1] 
\stackrel{\beta}{\to} H^{i+1}_\partial(\cal H_{n-1})\oplus H^{i}_\partial(\cal H_{n-1}) \to...$$
In this case, the transition map of the snake lemma turns out to be injective (it is ``wedging with $\nu_n$''), so the long exact sequence
falls apart to short exact sequences, and the statement follows by induction on $n$.
\end{proof}

Note that from these results one can compute the characteristic homology of all hyperplane arrangements with no dependencies amongst the hyperplanes, 
as these are products of some $\cal H_n$ with an empty arrangement.

\begin{proposition}
If the arrangement $\cal H$ contains a degenerate hyperplane $H$ (i.e., $H=V$), then $H^\bullet_d(\cal H)=0$.
\end{proposition}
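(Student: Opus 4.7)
The plan is to derive this vanishing from the long exact sequence of Theorem \ref{thm:les}. Let $H_l$ be the degenerate hyperplane, so $H_l = V$. The key preliminary observation is that intersecting with $V$ does nothing: for every $S \subseteq [n]$ one has $H_{S \cup l} = H_S$, and in particular the restriction arrangement $\cal H^{H_l}$ coincides, as an arrangement in $V = H_l$, with the deleted arrangement $\cal H - H_l$. Thus the long exact sequence of Theorem \ref{thm:les} takes the form
\begin{equation*}
\cdots \to H^{i-1}_d(\cal H - H_l) \to H^i_d(\cal H) \to H^i_d(\cal H - H_l) \xrightarrow{\delta} H^i_d(\cal H - H_l) \to \cdots,
\end{equation*}
and it suffices to prove that the connecting map $\delta$ is an isomorphism in every degree, whereupon exactness immediately forces $H^i_d(\cal H) = 0$ for all $i$.

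I would compute $\delta$ explicitly by tracing through the snake lemma on the short exact sequence of chain complexes constructed in the proof of Theorem \ref{thm:les}. Given a cocycle $c \in \cal C^i_d(\cal H - H_l)$, the canonical splitting of subsets of $[n]$ into those containing $l$ and those not gives a canonical lift of $c$ to $\cal C^i_d(\cal H)$. Applying $d$ and extracting the part indexed by subsets containing $l$, only the edge maps $d_{S, l}$ for $l \notin S$ contribute; because $H_l = V$, each such map is literally the identity on $\wedge^\bullet H_S = \wedge^\bullet H_{S \cup l}$, carrying the sign $\varepsilon_{S, l}$. Composing with the sign adjustment built into $\iota'$ in the proof of Theorem \ref{thm:les}, one obtains on the summand $\wedge^\bullet H_S$ (with $|S| = i$) a total sign of $(-1)^{|S|} = (-1)^i$, independent of $S$.

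Thus $\delta: H^i_d(\cal H - H_l) \to H^i_d(\cal H^{H_l}) = H^i_d(\cal H - H_l)$ is just multiplication by $(-1)^i$, which is manifestly an isomorphism, and the vanishing $H^\bullet_d(\cal H) = 0$ follows from the long exact sequence. The main (and essentially only) obstacle is the sign bookkeeping needed to see that the signs $\varepsilon_{S, l}$ from the differential and the signs from $\iota'$ combine to the uniform factor $(-1)^i$ in homological degree $i$; once this is confirmed, everything else is formal.
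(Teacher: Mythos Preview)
Your proof is correct and follows essentially the same approach as the paper: observe that $\cal H^{H_l} = \cal H - H_l$, then use the long exact sequence of Theorem \ref{thm:les} and show that the connecting homomorphism is an isomorphism. The paper merely asserts this last point (deferring to \cite{HelmeRong}), whereas you carry out the sign bookkeeping explicitly and correctly identify the connecting map as $(-1)^i\cdot\mathrm{id}$.
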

 
\begin{proof}
 The proof is the same as that of the corresponding theorem for loop edges in \cite{HelmeRong}.
If $H$ is degenerate, then $\cal H^H=\cal H-H$, and for each $i$ the transition map 
$\gamma: H^i_d(\cal H -H) \to H^i_d(\cal H^H)$ is an isomorphism, which implies that
then $H^\bullet_d(\cal H)=0$.
\end{proof}

In contrast, $H^\bullet_\partial$ is not necessarily zero for arrangements that contain degenerate hyperplanes.
For example, the total dimension of $H^\bullet_\partial$ for $\k$ with a single degenerate hyperplane is 4, as
the only differential in the complex is zero.

\section{Signed hyperplane arrangements and Odd Khovanov homology}\label{sec:signed}
In this section we consider signed arrangements: vector arrangements together with a sign associated to each vector.  
Just like vector arrangements generalize graphs in a sense, signed vector arrangements 
generalize signed graphs.
They arise naturally from low dimensional topology: the checkerboard coloring of a planar projection of a link
gives rise to a signed planar graph, which in turn has an associated signed arrangement.
A topologist considers planar link projections up to the equivalence relation generated 
by Reidemeister moves, which characterize isotopies of the link in $\bbR^3$.  
This equivalence relation generalizes naturally to signed graphs \cite{BollobasRiordan}, and signed vector arrangements, and via 
this generalization Reidemeister invariance questions may be posed for polynomials or chain complexes associated to signed arrangements.

We define a version of Tutte homology for signed hyperplane arrangements, which we call framed odd Khovanov homology,
and prove hyperplane Reidemeister 
invariance for these homology groups. When restricted from signed hyperplane arrangements to planar link projections, this Tutte homology 
is a framed reduced version of the odd Khovanov homology  of Ozsv\'ath-Rasmussen-Szab\'o \cite{OzsvathRasmussenSzabo}.  
We should mention that in the case of a planar link projection, our chain complex is not identical to the chain complex 
of \cite{OzsvathRasmussenSzabo}; instead it is closely related to the chain complex defined by Bloom in \cite{Bloom}, wherein he provides a chain 
homotopy between his complex and that of Ozsv\'ath-Rasmussen-Szab\'o.

This section is organized as follows: first, without discussing links, we define Reidemeister moves, the framed Jones polynomial, and framed odd Khovanov homology
for signed arrangements and prove their main properties (long exact sequences, K\"unneth theorem and most importantly Reidemeister invariance). In 
Section \ref{subsec:links} we discuss the procedure of associating a signed vector arrangement to a planar link diagram and 
the relationship of the framed Jones polynomial
and odd Khovanov homology of arrangements to the well-known Jones polynomial and odd Khovanov homology of links.

\subsection{Signed arrangements, Reidemeister moves and the Jones polynomial}\label{subsec:SignedArrnments}
A signed vector arrangement is a vector arrangement $\cal V=\{V; \nu_1,...,\nu_n\}$ together with an 
assignment of a sign ($+$ or $-$) to each vector $\nu_i$.  The hyperplane arrangement associated to a signed vector 
arrangement is referred to as a signed hyperplane arrangement, since the sign associated to each vector can be thought 
of as a sign attached to the associated hyperplane.  Since all constructions in this section will be carried out for 
signed arrangements, we use the same notation as we did for unsigned arrangements in previous sections; thus in the 
notation $\cal V=\{V; \nu_1,...,\nu_n\}$, it is understood that each $\nu_i$ is a vector 
together with a sign.  Similarly, we denote by $\cal H=\{V; H_1,...,H_n\}$ the signed hyperplane arrangement associated to $\cal V$.

Let $\cal V=\{V; \nu_1,...,\nu_n\}$ be a signed vector arrangement.  The sign assignment partitions the set $[n]$ into 
subsets $[n]=[n]_+ \sqcup [n]_-$, where $[n]_+$ is the
set of vectors assigned $+$ and and $[n]_-$ is the set vectors assigned $-$. For
a subset $S \subseteq [n]$ we write $S_+=S \cap [n]_+$ and $S_-=S \cap [n]_-$. 

Deletion and restriction of signed arrangements is defined just as for ordinary arrangements.  We extend Gale duality from arrangements 
to signed arrangements as follows.    If $\cal V=\{V; \nu_1,...,\nu_n\}$ is a signed arrangement, the Gale dual 
$\cal V^\vee = \{W; \nu^\vee_1,...,\nu^\vee_n\}$ is, as an unsigned arrangement, the Gale dual of the unsigned arrangement $\cal V$.
The sign assignment of $\cal V^\vee $ is given by declaring that the sign associated to $\nu^\vee_i$ is the opposite of the 
sign assigned to $\nu_i$.

Now we define Reidemeister moves for signed vector arrangements. The motivation is link theory:
planar projections of isotopic (framed) links may be obtained from one another by a sequence of Reidemeister moves.  
Reidemeister moves have been generalized to signed graphs by Bollob\'as and Riordan, for a detailed 
discussion we refer the reader to \cite{BollobasRiordan} and references therein.  
Here we simply state the moves for signed arrangements; readers motivated by link homology may prefer to read Section \ref{subsec:links} 
up to Proposition \ref{prop:RmovesLinks} first, and readers familiar with the Bollob\'as--Riordan article will see that the signed arrangement Reidemeister moves
are straightforward generalizations of signed graph moves.

Reidemeister moves come in Gale dual pairs, denoted $Ri$ and $Ri^\vee$ for $i=1,2,3$. We will re-state this more precisely after listing 
the Reidemeister moves (see Proposition \ref{prop:DualPairs}); here we mention for motivation that the corresponding graph phenomenon
is that signed graph Reidemeister moves are in planar dual pairs, which in turn correspond to opposite checkerboard shadings of a link
diagram, as discussed in Section \ref{subsec:links}. Finally, the $w$ preceding $R1$ in our notation stands for ``weak'', as it is a weaker
version of the $R1$ moves in \cite{BollobasRiordan}, and preserves the isotopy class {\em and total framing} of links, as explained in
Section \ref{subsec:links}. 

\begin{itemize}
\item $wR1$: If $\nu_l=\nu_m=0$ for some $l$ and $m$ of opposite signs, then $\cal V \leftrightarrow \cal V - \{\nu_l ,\nu_m\}$.
\item $wR1^\vee$: If $\nu_l^\vee=\nu_m^\vee=0$ (i.e., $\nu_l$ and $\nu_m$ are independent of $\cal V - \nu_l$ and $\cal V -\nu_m$, respectively), for some $l$ and $m$
of opposite signs,
then $\cal V \leftrightarrow \cal V^{\{\nu_l,\nu_m\}}$.
\item $R2$: If $\nu_l=\alpha \nu_m$, for some non-zero $\alpha \in \k$ and $l \neq m$ are of opposite signs, 
then $\cal V \leftrightarrow \cal V-\{\nu_l,\nu_m\}$.
\item $R2^\vee$: If $\nu_l^\vee=\alpha \nu_m^\vee$, with non-zero $\alpha \in \k$, for some $l \neq m$ of opposite signs, then $\cal V \leftrightarrow \cal V^{\nu_l,\nu_m}$. 
\item $R3$: Suppose there are three distinct vectors $\nu_l, \nu_m$ and $\nu_p$ in $\cal V$ with $l,m \in [n]_+$ and $p \in [n]_-$, 
and a linear dependence $\nu_l^\vee=\alpha_m \nu_m^\vee+ \alpha_p \nu_p^\vee$
with non-zero coefficients $\alpha_m,\alpha_p$. Then $\cal V \leftrightarrow \cal V'$, where $\cal V'$ is the arrangement obtained from $\cal V^{\nu_l}$ 
by adding an extra vector $\nu_l'=\alpha_p \nu_m-\alpha_m\nu_p$. The signs in $\cal V'$ are the same as those in $\cal V$
except that the sign of $l$ changes from positive to negative, i.e., $[n]_-' =[n]_-\cup l$.
\item $R3^\vee$: The same statement as $R3$, but with opposite sign assignments.

\end{itemize}

Note that the $wR1$ and $wR1^\vee$ moves are special cases of $R2$ and $R2^\vee$, respectively, where
the vectors are zero. We separarate the definitions as above in order to have a clear correspondence between
the Reidemeister moves above and Reidemeister moves of framed links (see Section \ref{subsec:links}), and because the case of 
$wR1$ and $wR1^\vee$ needs separate treatment in the proof of Theorem \ref{thm:Reidemeister}.

\begin{proposition}\label{prop:DualPairs}
The pairs of arrangement Reidemeister moves $wR1$ and $wR1^\vee$, $R2$ and $R2^\vee$, and $R3$ and $R3^\vee$ 
are Gale duals in the sense that the following square commutes:
 $$\xymatrix{
  \cal V \ar[r]^\vee \ar[d]^{Ri} & \cal V^\vee \ar[d]^{Ri^\vee} \\
  Ri(\cal V) \ar[r]^-\vee & (Ri(\cal V))^\vee=Ri^\vee(\cal V^\vee)  
}$$
\end{proposition}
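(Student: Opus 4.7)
The plan is to verify commutativity of the square for each of the three Reidemeister pairs separately, using the Gale duality formulas $(\cal V - \nu_i)^\vee = (\cal V^\vee)^{\nu_i^\vee}$ and $(\cal V^{\nu_i})^\vee = \cal V^\vee - \nu_i^\vee$ established in Section \ref{subseq:delres}, together with the convention that each $\nu_i^\vee$ in $\cal V^\vee$ carries the sign opposite to that of $\nu_i$ in $\cal V$. In all three cases, the hypothesis of $Ri^\vee$ is obtained from that of $Ri$ by interchanging $\nu_j$'s with $\nu_j^\vee$'s and flipping signs; via the double-duality identification $(\cal V^\vee)^\vee \cong \cal V$ of Remark \ref{rm:defGale}, these conditions are exchanged when $\cal V$ is replaced by $\cal V^\vee$. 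Hence whenever $Ri$ applies to $\cal V$ the move $Ri^\vee$ applies to $\cal V^\vee$, so both legs of the square are defined, and the task reduces to identifying the two resulting arrangements.

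For $wR1$ and $R2$, the effect on $\cal V$ is a pure deletion $\cal V \mapsto \cal V - \{\nu_l, \nu_m\}$, while the dual move is the pure restriction $\cal V^\vee \mapsto (\cal V^\vee)^{\{\nu_l^\vee, \nu_m^\vee\}}$. Commutativity follows by applying $(\cal V - \nu_i)^\vee = (\cal V^\vee)^{\nu_i^\vee}$ at both indices $l$ and $m$, and the opposite-sign requirement on $\{l, m\}$ is preserved by the Gale dual sign flip. No extra work is required in these cases.

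For $R3$, the move is more intricate, producing an arrangement $\cal V' = \cal V^{\nu_l} \cup \{\nu_l'\}$ living in $\nu_l^\perp$, with $\nu_l' = \alpha_m \nu_m + \alpha_p \nu_p$ interpreted via orthogonal projection to $\nu_l^\perp$. I would verify $(\cal V')^\vee = R3^\vee(\cal V^\vee)$ by a direct computation, using the defining dependence $\nu_l^\vee = \alpha_m \nu_m^\vee + \alpha_p \nu_p^\vee$ in $W$ and the identity $\nu_i + \nu_i^\vee = x_i$ from Remark \ref{rm:defGale} to track how the space of linear dependencies of the modified arrangement transforms under restriction and the adjunction of a new vector. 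The sign change at $l$ from $+$ to $-$ produced by $R3$ matches, under the Gale dual sign flip, exactly the sign change from $-$ to $+$ produced by $R3^\vee$, so the sign bookkeeping is automatic.

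The main obstacle is the last case: unlike $wR1$ and $R2$, the move $R3$ both changes the ambient vector space (passing from $V$ to $\nu_l^\perp$) and introduces a new vector in place of an old one, so the Gale dual must be computed in the modified ambient space. A clean way to handle this is to enlarge $\cal V$ by temporarily adjoining $\nu_l'$ as an extra vector, reinterpret the $R3$ move on $\cal V$ as a composition of a pure deletion and a pure restriction on this enlargement, compare with the corresponding enlargement of $\cal V^\vee$, and then contract back; this reduces the verification to the already-established Gale duality for deletion and restriction plus the linear algebra relating the dependences $\nu_l^\vee = \alpha_m \nu_m^\vee + \alpha_p \nu_p^\vee$ and their Gale duals.
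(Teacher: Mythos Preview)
Your approach is essentially the same as the paper's: the paper also treats this as a routine verification via the Gale duality of deletion and restriction together with double duality $(\nu_i^\vee)^\vee=\nu_i$, and in fact only works out the $R2$ case explicitly, leaving the other cases to the reader. Your outline for $wR1$ and $R2$ matches the paper's argument exactly, and your more detailed plan for $R3$ (reducing to deletion/restriction on an enlarged arrangement) supplies a strategy the paper omits.
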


\begin{proof}
 This is a routine check, let us demonstrate it for Reidemeister 2 only. Let $\cal V=\{V; \nu_1,...,\nu_n\}$, and assume for simplicity that
 $\nu_1=\alpha \nu_2$, with $\alpha \neq 0$, $1\in [n]_+$ and $2 \in [n]_-$. 
 
 The Gale dual of $\cal V$ is $\cal V^\vee=\{W, \nu_1^\vee,...,\nu_n^\vee\}$. Since $(\nu_i^\vee)^\vee=\nu_i$, $\nu_1^\vee$ and $\nu_2^\vee$ 
 satisfy the condition for $R2^\vee$. Applying $R2^\vee$ results in $(\cal V^\vee)^{\{\nu_1^\vee,\nu_2^\vee\}}$. 
 
 Performing $R2$ on $\cal V$ results in $\cal V-\{\nu_1,\nu_2\}$. Since deletion and restriction are Gale dual notions (as mentioned in Section \ref{subsec:delres}),
 the Gale dual of $\cal V-\{\nu_1,\nu_2\}$ is $(\cal V^\vee)^{\{\nu_1^\vee,\nu_2^\vee\}}$, as needed.
\end{proof}

Given a signed arrangement $\cal V=\{V; \nu_1,...,\nu_n\}$, we define the (normalized) framed Jones polynomial of $\cal V$ to be
a polynomial in $\mathbb Z[i,q^{1/2},q^{-1/2}]$, where $i^2=-1$. $J(\calV)$ is given by the state sum formula 
\begin{equation}\label{eq:Jones}
J(\cal V)= \sum_{S\subseteq [n]} (-1)^{|S|-n/2}q^{|S|-n/2}(q+q^{-1})^{\dim H_{S_+ \cup S_-^c}+\dim W_{S_+ \cup S_-^c}}.
\end{equation}
Here $S_-^c$ denotes the complement of $S_-$ in $[n]_-$. We will denote $S_+\cup S_-^c$ by $\tilde{S}$ in the future. 

In Section \ref{subsec:links}, Proposition \ref{prop:JonesAndJones} will discuss how $J(\cal V)$ is related to the normalized Jones polynomial (\ref{eq:UsualJones}) of links. 
Readers motivated by link theory may wish to read Section \ref{subsec:links} up to the end of the proof of Proposition \ref{prop:JonesAndJones} before proceeding.

\begin{proposition}
The framed Jones polynomial of a hyperplane arrangement is a hyperplane Reidemeister invariant.
\end{proposition}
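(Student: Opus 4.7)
The strategy is a direct verification by state-sum manipulation. By Proposition \ref{prop:DualPairs} the six Reidemeister moves come in Gale dual pairs, so it suffices to check invariance under $wR1$, $R2$, and $R3$ provided that the polynomial itself satisfies $J(\cal V)=J(\cal V^\vee)$. This last fact is immediate from the definition: under Gale duality $H^\vee_S=W_{S^c}$ and $W^\vee_S=H_{S^c}$, while the sign assignment flips, so the subset $\tilde S=S_+\cup S_-^c$ for $\cal V$ corresponds, under the involution $S\mapsto S^c$, to the analogous $\tilde S^\vee$ for $\cal V^\vee$; the exponent $\dim H_{\tilde S}+\dim W_{\tilde S}$ and the factors $(-1)^{|S|-n/2}q^{|S|-n/2}$ are preserved under this correspondence.

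For $wR1$ and $R2$, let $A=\{l,m\}$ be the hyperplanes involved in the move, and organize the state sum over $S\subseteq[n]$ by first fixing $S'=S\setminus A$ and then summing over the four choices of $(\epsilon_l,\epsilon_m)\in\{0,1\}^2$. The geometric hypothesis ($\nu_l=\nu_m=0$ for $wR1$; $\nu_l=\alpha\nu_m$ for $R2$) determines how $\dim H_{\tilde S}$ and $\dim W_{\tilde S}$ differ from their values $\dim H_{\tilde S'}$ and $\dim W_{\tilde S'}$ in the reduced arrangement $\cal V-\{l,m\}$: since $\nu_l,\nu_m$ are parallel (or zero), the quantity $\dim H_{\tilde S}+\dim W_{\tilde S}$ takes only two values across the four cases, depending on a single binary condition. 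In each case the local sum over $(\epsilon_l,\epsilon_m)$ simplifies to $-q\cdot(q+q^{-1})^{\dim H_{\tilde S'}+\dim W_{\tilde S'}}$, as a short expansion using $q(q+q^{-1})=q^2+1$ shows; combined with the normalization shift $(-1)^{-1}q^{-1}$ coming from changing $n/2$ to $(n-2)/2$, this recovers exactly the corresponding summand of $J(\cal V-\{l,m\})$ (for $wR1$) or of $J(\cal V-\{l,m\})$ (for $R2$, via the parallel identification).

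For $R3$, the same scheme applies with $A=\{l,m,p\}$ and eight choices of $(\epsilon_l,\epsilon_m,\epsilon_p)$, on both sides of $\cal V\leftrightarrow\cal V'$. The sign flip of $l$ in $\cal V'$ swaps the role of $\epsilon_l=0$ and $\epsilon_l=1$ in determining membership of $l$ in $\tilde S$, and the new vector $\nu_l'=\alpha_m\nu_m+\alpha_p\nu_p$ changes which tuples produce a linear dependency. The Gale-dual hypothesis $\nu_l^\vee=\alpha_m\nu_m^\vee+\alpha_p\nu_p^\vee$ translates (via the description $W^\perp=\{(\langle v,\nu_i\rangle)_i:v\in V\}$) into a linear relation among $\langle v,\nu_l\rangle,\langle v,\nu_m\rangle,\langle v,\nu_p\rangle$ that controls the six dimensions $\dim H_{\tilde S\cap A^c\cup B}$ for $B\subseteq A$, and similarly in $\cal V'$; matching the two side-by-side eight-term sums is then a finite combinatorial verification. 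The main obstacle is managing $R3$: unlike $wR1$ and $R2$, both the underlying vector arrangement and the sign pattern change, so the correspondence of summands between $\cal V$ and $\cal V'$ is nontrivial and the local invariant $\dim H_{\tilde S}+\dim W_{\tilde S}$ must be tracked carefully across all eight cases. If this case analysis proves unwieldy, an alternative is a skein-style reduction: expand $J(\cal V)$ at hyperplane $l$ as a $\mathbb{Z}[q,q^{-1}]$-combination of state sums for arrangements with $l$ removed or restricted, and reduce $R3$ to two applications of the $R2$ identity already established.
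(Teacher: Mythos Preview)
Your approach is genuinely different from the paper's.  The paper does not give an independent proof of this proposition at all: it simply observes that the statement follows by taking graded Euler characteristics in Theorem~\ref{thm:Reidemeister}, the chain-homotopy Reidemeister invariance of $\cal T(\cal V)$.  Your proposal instead attempts a direct, elementary verification at the level of the state sum, avoiding the chain-complex machinery entirely.

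Your Gale-duality reduction and the $wR1$ and $R2$ computations are essentially correct.  (A minor point: the involution $S\mapsto S^c$ is not actually needed for $J(\cal V)=J(\cal V^\vee)$; for the \emph{same} $S$ one has $\tilde S^\vee=(\tilde S)^c$ and hence $\dim H^\vee_{\tilde S^\vee}+\dim W^\vee_{\tilde S^\vee}=\dim W_{\tilde S}+\dim H_{\tilde S}$, while the prefactor $(-1)^{|S|-n/2}q^{|S|-n/2}$ is unchanged.)  For $wR1$ and $R2$ the four-term local sum does collapse to $(-1)^{|S'|+1}q^{|S'|+1}(q+q^{-1})^{b}$ in every sub-case, exactly cancelling the shift $n/2\to(n-2)/2$; this is a clean computation and a legitimate independent argument.

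The gap is $R3$.  You correctly identify it as the hard case but do not actually carry out either of your two suggested strategies.  The eight-case table is doable but sensitive: the linear relation $\nu_l^\vee=\alpha_m\nu_m^\vee+\alpha_p\nu_p^\vee$ constrains the dimensions $\dim H_{\tilde S},\dim W_{\tilde S}$ in a way that also depends on how $\nu_l,\nu_m,\nu_p$ sit relative to $V_{\tilde S'}$, so several sub-cases arise on each side and must be matched.  Your alternative ``skein reduction to two $R2$ moves'' is not obviously available here---unlike for link diagrams, there is no planar isotopy to rearrange crossings, and the paper's own proof of Theorem~\ref{thm:Reidemeister} handles $R3$ directly rather than by reduction to $R2$.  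So as written the $R3$ portion is a plan, not a proof.

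In summary: your route is more elementary and self-contained, and would yield an independent proof of the polynomial statement without invoking the categorified Theorem~\ref{thm:Reidemeister}; the paper's route is a one-line corollary of that theorem.  The cost of your approach is that the $R3$ verification, which the paper gets for free, must be completed by hand.
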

Since this proposition follows by taking Euler characteristics in Theorem \ref{thm:Reidemeister}, we won't give 
an independent proof of it here.

\subsection{Odd Khovanov homology for signed arrangements}
To a signed vector arrangement $\cal V$ we associate a $(\mathbb Z \cdot \frac{1}{2})$-bi-graded chain complex (i.e., each degree is an integer or half integer). 
We denote this complex by $\cal T(\cal V)$ and define it as follows. 

As usual, we build a cube of chain groups by associating to each subset $S \subseteq [n]$ a bi-graded vector space 
$$T_S=\extp^\bullet ( H_{\tilde{S}}\oplus W_{\tilde{S}}) 
\cong \extp^\bullet H_{\tilde{S}}\otimes \extp^\bullet W_{\tilde{S}}.$$ 
Note that if $s\in[n]_+$ then $s\in \tilde{S}$ if and only if $s \in S$, while if $r \in [n]_-$ then $r \in \tilde{S}$ iff $r \notin S$.

We define the bi-grading on this complex by setting 
$$\deg \extp^i H_{\tilde{S}} \otimes \extp^j W_{\tilde{S}}=(|S|-n/2, |S|+\dim H_{\tilde{S}}+\dim W_{\tilde{S}}-2(i+j)-n/2).$$
These global shifts will
be needed for Reidemeister invariance. Note that the complex is either entirely contained in integer bi-degrees or in half integer bi-degrees, 
depending on the parity of $n$.
We will refer to the first grading as the homological grading and to the second as the $q$-grading. 

For $r\notin S$ the map $\delta_{S,r}: T_S \to T_{S \cup r}$ is defined as

\begin{equation}\label{eq:delta}
\delta_{S,r}=\begin{cases}
                        d_{\tilde{S},r}\otimes d_{\tilde{S},r} & \text{ if } r\in[n]_+ \text{ and } \nu_r\notin V_{\tilde{S}} \text{ (Type 1)}\\
			d_{\tilde{S},r}\otimes w_{\tilde{S},r} & \text{ if } r\in[n]_+ \text{ and } \nu_r \in V_{\tilde{S}} \text{ (Type 2)}\\
			b_{\tilde{S},r}\otimes b_{\tilde{S},r} & \text{ if } r\in[n]_- \text{ and } \nu_r\in V_{\tilde{S}-r} \text{ (Type 3)}\\
			w_{\tilde{S},r}\otimes b_{\tilde{S},r} & \text{ if } r\in[n]_- \text{ and } \nu_r\notin V_{\tilde{S}-r} \text{ (Type 4)}.
                       \end{cases}
\end{equation}
Here $d_{\tilde{S},r},w_{\tilde{S},r},b_{\tilde{S},r}$ are the the same maps from Section \ref{sec:linalg} that have been used 
in the construction of the chain groups associated to unsigned hyperplane arrangements.

The differential of the chain complex is defined as $\delta=\bigoplus_{S \in [n], r \notin S} \epsilon_{S,r} \delta_{S,r}$ for 
some appropriate choices of scalars $\epsilon_{S,r}$ specified below. Note that with respect to the bi-grading defined above,
$\deg \delta= (1,0)$. 

\begin{lemma}\label{lem:almostcomm}
In the cube of chain groups $T_S$ and edge maps $\delta_{S,r}$, every square face commutes up to a scalar, i.e. for any $r,t \notin S$,
$$\delta_{S\cup r, t } \circ \delta_{S,r} = \alpha_{S,r,t} \delta_{S \cup t, r} \circ \delta_{S,t},$$
for some scalars $\alpha_{S,r,t} \in \k$. 
\end{lemma}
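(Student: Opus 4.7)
The plan is to exploit the fact that each edge map $\delta_{S,r}$ is a tensor product $A_{S,r}\otimes B_{S,r}$, with $A_{S,r}$ acting on $\wedge^\bullet H_{\tilde S}$ and $B_{S,r}$ on $\wedge^\bullet W_{\tilde S}$. In the super-tensor convention of Remark \ref{rmk:super}, a composition $\delta_{S\cup r,t}\circ \delta_{S,r}$ splits as a tensor product of an $H$-side composition with a $W$-side composition, up to a Koszul sign determined purely by the parities of the constituent maps. Thus to prove the lemma it suffices to check, on each of the two tensor factors separately, that the two paths around the square agree up to a scalar; the total $\alpha_{S,r,t}$ is then the product of the two factor scalars with the Koszul sign.

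First I would do the bookkeeping of types. The type of the edge $S\to S\cup r$ is determined by the sign of $r$ together with whether $\nu_r$ lies in a specific span ($V_{\tilde S}$ or $V_{\tilde S-r}$). Since $\tilde{S\cup r}$ differs from $\tilde S$ by addition or removal of a single element (according to the sign of $r$), the rank conditions governing the types of the two \emph{far} edges of the square are essentially forced by the types of the two edges leaving $S$. I would enumerate the sixteen resulting possibilities and, for each, record the types of all four edges of the square.

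Next, for each of these cases I would verify the claimed identity on each tensor factor. The three basic operations in play are orthogonal projection to a smaller subspace, natural inclusion from a smaller subspace, and wedging with a projected copy of a distinguished vector ($\nu_r$ on the $H$-side, $\nu_r^\vee$ on the $W$-side). The key compatibilities are: a composition of projections (or of inclusions) equals the projection (resp.\ inclusion) to/from the common space and is therefore symmetric in $r$ and $t$; a composition of a wedge operation with a projection or inclusion agrees with the opposite order up to a sign coming from the exterior algebra, using that orthogonal projection is a homomorphism with respect to the wedge product applied to already-projected elements; and a composition of two wedges picks up the expected graded-commutative sign when the order is reversed. These three families of identities cover every case.

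The main obstacle I expect is the case-switching at corners: if $\nu_r\notin V_{\tilde S}$ but $\nu_r\in V_{\tilde{S\cup t}}$ (or vice versa), then the two $r$-directed edges of the square have different types—on one side a $d$ appears while on the other a $w$ does, and likewise on the $W$ factor. One must then check that the resulting scalars from the two factors combine consistently on each path. The expected source of the matching scalar is the explicit linear dependence which caused the type change in the first place: writing $\nu_r$ as a combination of the other vectors (after projection) produces precisely the coefficient needed to equate the two compositions. Once this is verified in the type-changing cases, the remaining cases are either strictly symmetric (both sides pure projections or pure inclusions) or differ by an exterior-algebra sign, and the lemma follows.
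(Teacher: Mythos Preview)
Your reduction to checking each tensor factor separately has a genuine gap. It works in the same-sign type-switching case, but fails in the mixed-sign one. Take $r\in[n]_+$, $t\in[n]_-$, with $\nu_r\notin V_{\tilde S-t}$ but $\nu_r\in V_{\tilde S}$ (equivalently $\nu_t\notin V_{\tilde S-t}$ but $\nu_t\in V_{\tilde S-t\cup r}$). Then the four edges have types $2,4,3,1$ respectively, and the two compositions applied to $x\otimes y$ land in $\wedge^\bullet H_{\tilde S\cup r-t}\otimes\wedge^\bullet W_{\tilde S\cup r-t}$ as
\[
\delta_{S\cup r,t}\circ\delta_{S,r}(x\otimes y)=x\otimes(\nu_r^\vee\wedge y),
\qquad
\delta_{S\cup t,r}\circ\delta_{S,t}(x\otimes y)=(\nu_t\wedge x)\otimes y.
\]
On the $H$-factor alone the first path is a nonzero projection/inclusion while the second is wedging with the projected $\nu_t$; on the $W$-factor the roles are reversed. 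Neither factor individually commutes up to a scalar in a consistent direction, so there are no factor scalars whose product gives $\alpha_{S,r,t}$. The actual mechanism here is different from the linear-dependence coefficient you anticipate: the membership conditions force the projection of $\nu_t$ to $H_{\tilde S\cup r-t}$ and the projection of $\nu_r^\vee$ to $W_{\tilde S\cup r-t}$ both to be zero, so \emph{each full composition vanishes}, with the zero coming from opposite tensor factors on the two paths.

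The paper's proof proceeds by the same case enumeration you sketch, and in the same-sign type-switching case it uses exactly the proportionality of projected $\nu_r^\vee$ and $\nu_t^\vee$ that you describe. But it treats the mixed-sign type-switching sub-case separately, arguing directly that both sides are zero rather than attempting a factorwise comparison. Your plan becomes correct if you abandon the claim that a factorwise check always suffices and instead handle this sub-case (and its sign-reversed twin) by the vanishing argument.
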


\begin{proof}
 The proof amounts to checking several cases depending on which type of differential each side of the square belongs to.
The first major case is when $r,t \in [n]_+$. This breaks down into four sub-cases, as follows:
\begin{itemize}
 \item If $\nu_r \in V_{\tilde{S}}$ and $\nu_t \in V_{\tilde{S}}$, then all four edge maps of the square are of Type 2,
so the square is anti-commutative.
\item If $\nu_r \notin V_{\tilde{S}\cup t}$ and $\nu_t \notin V_{\tilde{S}\cup r}$, then all edge maps are of Type 1, 
and the square commutes.
\item If $ \nu_r \in V_{\tilde{S}}$ and $\nu_t \notin V_{\tilde{S}\cup r}$, then both $r$-edges are of Type 2, while
both $t$-edges are of Type 1, so the square commutes.
\item The most interesting case is when $\nu_r \notin V_{\tilde{S}}$ but $\nu_r \in V_{\tilde{S}\cup t}$, which implies that 
$\nu_t \notin V_{\tilde{S}}$ but $\nu_t \in V_{\tilde{S}\cup r}$. In this case there are two edges of Type 1 and two of Type 2,
namely $\delta_{S,r}=d \otimes d$, $\delta_{S,t}=d \otimes d$, $\delta_{S \cup t, r}=d \otimes w$, and $\delta_{S\cup r, t }=d \otimes w$.

Take $x \otimes y \in T_S=H_{\tilde S} \otimes W_{\tilde S}$. The two sides of the equality we need to check are:
$$\delta_{S\cup r, t } \circ \delta_{S,r}(x \otimes y)=x \otimes \nu^\vee_t \wedge y 
\in H_{\tilde{S}\cup r\cup t}\otimes W_{\tilde{S}\cup r\cup t},$$ and
$$\delta_{S \cup t, r} \circ \delta_{S,t}(x \otimes y)=x \otimes \nu^\vee_r \wedge y 
\in H_{\tilde{S}\cup r\cup t}\otimes W_{\tilde{S}\cup r\cup t}.$$
Recall that both $\nu^\vee_r$ and $\nu^\vee_t$ are interpreted in $W_{\tilde{S}\cup r\cup t}$ via orthogonal
projections, and due to the condition that  $\nu_r \notin V_{\tilde{S}}$ but $\nu_r \in V_{\tilde{S}\cup t}$,
it follows that their projections onto $W_{\tilde{S}\cup r\cup t}$ only differ by a scalar $\alpha_{S,r,t}$.
\end{itemize}

The second major case to consider is when $r \in [n]_+$ and $t \in [n]_-$. This breaks into five sub-cases, four of
which are trivial (the square either commutes or anti-commutes). 
\begin{itemize}
\item
The one interesting sub-case is when 
$\nu_r\notin V_{\tilde{S}-t}$ but $\nu_r \in V_{\tilde{S}}$, which implies that 
$\nu_t \notin V_{\tilde{S}-t}$ but $\nu_t \in V_{\tilde{S}-t\cup r}$. Again, consider 
$x \otimes y \in T_S=H_{\tilde S} \otimes W_{\tilde S}$. The two sides of the equality turn out to be
$$\delta_{S\cup r, t } \circ \delta_{S,r}(x \otimes y)=x \otimes \nu^\vee_r \wedge y 
\in H_{\tilde{S}\cup r\cup t}\otimes W_{\tilde{S}\cup r- t},$$ and
$$\delta_{S \cup t, r} \circ \delta_{S,t}(x \otimes y)=\nu_t \wedge x \otimes y 
\in H_{\tilde{S}\cup r\cup t}\otimes W_{\tilde{S}\cup r- t}.$$
The condition $\nu_r\notin V_{\tilde{S}-t}$, $\nu_r \in V_{\tilde{S}}$ implies that when projected
orthogonally onto $H_{\tilde{S}\cup r\cup t}$ and $W_{\tilde{S}\cup r- t}$, respectively, both
$\nu_t$ and $\nu_r^\vee$ map to zero, so the square commutes.
\end{itemize}

The third major case, when $r,t \in [n]_-$, is similar to the first, so we leave it to the reader to check.
\end{proof}

\begin{lemma}\label{lem:assign}
 There is a non-zero scalar assignment $\epsilon_{S,r} \in \k^\times$ to each edge to make the above cube anti-commutative,
and hence the flattened cube is a chain complex. Furthermore, choosing a different scalar assignment
does not change the homology of the complex.
\end{lemma}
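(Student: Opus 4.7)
I would reformulate both assertions as statements about the multiplicative cubical cochain complex of the $n$-cube with coefficients in $\k^\times$, then invoke contractibility of the cube. Lemma \ref{lem:almostcomm} packages the obstruction to strict (anti-)commutativity into scalars $\alpha_{S,r,t}\in\k^\times$, one per $2$-face, with the convention $\alpha_{S,r,t}:=1$ whenever both compositions on the face vanish, making the corresponding anti-commutativity constraint vacuous. Viewing these scalars as a multiplicative $2$-cochain $\omega$ on the cube, the requirement that a $1$-cochain $\epsilon$ make the cube anti-commutative is precisely the equation $d\epsilon=\omega$, where $d$ denotes the multiplicative cubical coboundary $d\epsilon(S,r,t)=\epsilon_{S\cup r,t}\,\epsilon_{S,r}/(\epsilon_{S\cup t,r}\,\epsilon_{S,t})$.

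The key step, and the one I expect to be the main technical obstacle, is verifying that $\omega$ is a cocycle: around every $3$-dimensional sub-cube, the six $\alpha$-values must multiply to $1$. This encodes associativity of composition: two ways of reducing a triple composition $\delta_{S\cup\{r,t\},u}\circ\delta_{S\cup r,t}\circ\delta_{S,r}$ to a common normal form by repeated application of Lemma \ref{lem:almostcomm} must yield equal overall scalars. The verification reduces to a finite case check indexed by the edge types in Equation (\ref{eq:delta}); the majority of cases are trivial because some composition on the sub-cube vanishes, while the genuinely nontrivial cases parallel the ``interesting'' sub-cases already treated in the proof of Lemma \ref{lem:almostcomm}, but now for triples rather than pairs of indices.

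Granting the cocycle condition, contractibility of the $n$-cube gives $H^2=0$ with coefficients in $\k^\times$, so $\omega$ is a coboundary and an $\epsilon$ satisfying $d\epsilon=\omega$ exists. For the independence statement, given two such assignments $\epsilon,\epsilon'$, the ratio $\lambda_{S,r}:=\epsilon'_{S,r}/\epsilon_{S,r}$ satisfies $d\lambda\equiv 1$ and is therefore a multiplicative $1$-cocycle. Contractibility again yields $H^1=0$, so $\lambda=d\phi$ for some $\phi$ on vertices, i.e.\ $\lambda_{S,r}=\phi_{S\cup r}/\phi_S$. The diagonal map $\Phi:\cal T\to\cal T$ acting by multiplication by $\phi_S$ on $T_S$ is then a vector space isomorphism that intertwines $\delta^\epsilon$ with $\delta^{\epsilon'}$, giving an isomorphism of chain complexes and in particular of their homologies.

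As a practical alternative to the abstract cocycle check, one can build $\epsilon$ directly by induction, processing edges in an order in which each new sign is uniquely forced by a single $2$-face already carrying three signed edges, then verifying closure around each $3$-face as it arises. This performs the same combinatorial verification one face at a time and may be more palatable in practice than the global cohomological argument, though the two routes are logically equivalent.
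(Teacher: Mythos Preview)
Your proposal is correct and follows essentially the same route as the paper: both encode the face obstructions of Lemma~\ref{lem:almostcomm} as a multiplicative $2$-cochain on the hypercube, verify the cocycle condition on $3$-faces, and then invoke contractibility of the cube to obtain existence ($H^2=0$) and uniqueness-up-to-isomorphism ($H^1=0$) of the edge assignment. The one notable difference is in how the cocycle condition is argued. You propose a finite case check indexed by edge types, whereas the paper gives a uniform path-tracing argument: starting from a fixed path $p$ through the $3$-cube, each face scalar moves $p$ across that face, and after traversing all six faces one returns to $p$, forcing the product of the six scalars to be $1$ (with the convention that degenerate faces receive the scalar $-1$). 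This avoids the casework you anticipate, though both arguments need a word about the degenerate situation where compositions vanish.
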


\begin{proof}
 The proof is the same homological argument as the proof of the corresponding statements (Lemmas 1.2 and 2.2)
in \cite{OzsvathRasmussenSzabo}, so we only give a brief outline here. Consider the hypercube $\cal T(\cal V)$ as a cell complex, oriented by
the ordering of the vectors in $\cal V$.
Define a 2-cochain $c \in C^2(\cal T(\cal V), \k^\times)$ by associating to each face the negative of the scalar which obstructs the 
commutativity of the square, described in Lemma \ref{lem:almostcomm} and its proof. More precisely if $c_{S,r,t}$ denotes the scalar 
assigned to the 2-face of the cube corresponding to the subset $S$ and $r<t$ both in $S^c$, then 
$\delta_{S\cup t, r}\circ\delta_{S,t}=-c_{S,r,t}\delta_{S\cup r,t}\circ\delta_{S,r}$. When both compositions are zero, define $c_{S,r,t}=-1$. 

To check that $c$ is a cocycle, we need to show that $\partial(c): \{3-\text{faces}\} \to \k^\times$ sends each 3-face of the hypercube
to $1\in \k^\times$. The 3-faces of $\cal T(\cal V)$ are indexed by a subset $S$, and $r<t<u$ in $S^c$. The value assigned to this 3-face 
by $\partial(c)$ is the product $$\Pi=c_{S,r,t}c_{S,t,u}c_{S,r,u}c_{S\cup r,t,u}c_{S\cup t, r,u}c_{S\cup u, r,t}.$$ Note that the order $r<t<u$
determines a path $p=\delta_{S\cup r\cup t, u}\circ\delta_{S\cup r,t}\circ\delta_{S,r}$ from the vertex corresponding to $S$ to the
one corresponding to $S\cup r \cup t \cup u$. Multiplying by $c_{S,r,t}$ moves the path across the
appropriate 2-face and produces $c_{S,r,t}p=-\delta_{S\cup r\cup t, u}\circ\delta_{S\cup t, r}\circ\delta_{S,t}$. Continuing through all six 2-faces,
we eventually get back to the path $p$, so we have $\Pi p=p$, hence $\Pi=1$, as required. 

Since the cube is contractible, $c$ must be a coboundary, and this provides the
desired  scalar assignment. 

The uniqueness part of the lemma follows from the fact that the product of two such edge assignments
is a 1-cocycle. So it is the coboundary of some zero-cochain $\gamma: \cal T(\cal V) \to \k^\times$, which associates non-zero
scalars $\gamma(S)$ to each vertex $T_S$ of the hypercube. The isomorphism of the chain complexes is the map which is given by multiplication 
with $\gamma_S$ on $T_S$.
\end{proof}

\begin{remark}\label{rem:field}\rm{
In the proof of Lemmas \ref{lem:almostcomm} and \ref{lem:assign} above, we have worked over a field $\k$.  In order to work over 
$\bb Z$, or another commutative ring, one must  check that all the scalar obstructions to the commutativity of the hypercube are units.  
For signed graphical arrangements --- and in particular, as we will later see, for planar link projections --- it is straightforward to check that these scalar 
obstructions are always $\pm 1$ (this is not the case for non-graphical arrangements).  Thus in the graphical case,  we may take $\k = \bb Z$.  Then
the proofs of Lemmas \ref{lem:almostcomm} and \ref{lem:assign}, 
as well as the rest of the proofs in this section, go through without change. 
}
\end{remark}

We have defined a bi-graded chain complex $\cal T (\cal V)$. We will denote the resulting homology groups by $H_{Kh}^j(\cal V)$, which, 
for each $j$, is a graded $\k$-vector space. 

We will claim that $H_{Kh}^\bullet(\cal V)$ is a categorification of the framed Jones polynomial of arrangements, however for this statement
to make sense we have to generalize the notion of graded Euler characteristic to half-integer degrees, as follows:
$$\chi_q(H_{Kh}^\bullet(\cal V)):=\sum_{j, l\in \bb Z\cdot 1/2} (-1)^j \dim H^{j,l}_{Kh}(\cal V) \cdot q^l.$$
Although the formula is the same as before, one will encounter terms of $i=\sqrt{-1}$ and $\sqrt{q}$ arising from the half-integer degrees.

\begin{proposition}
The cohomology $H_{Kh}^\bullet(\cal V)$ categorifies the framed Jones polynomial (\ref{eq:Jones}) of hyperplane arrangements.
\end{proposition}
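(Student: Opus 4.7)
The plan is to verify this directly from the definitions. Since the differential $\delta$ has bidegree $(1,0)$ with respect to the bi-grading defined just before Lemma \ref{lem:almostcomm}, it preserves the $q$-degree and shifts the homological degree by $1$. Consequently, the graded Euler characteristic of the chain complex $\cal T(\cal V)$ equals that of its cohomology $H^\bullet_{Kh}(\cal V)$, and I only need to compute the Euler characteristic of the chain complex and match it against the state sum (\ref{eq:Jones}) for $J(\cal V)$.

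The computation is a bookkeeping exercise. First I would fix $S \subseteq [n]$ and sum over $i, j$ the contribution of the summand $\wedge^i H_{\tilde S}\otimes \wedge^j W_{\tilde S}$, whose dimension is $\binom{\dim H_{\tilde S}}{i}\binom{\dim W_{\tilde S}}{j}$. Its homological degree is $|S|-n/2$, giving the overall sign $(-1)^{|S|-n/2}$, and its $q$-degree is $|S|+\dim H_{\tilde S}+\dim W_{\tilde S}-2(i+j)-n/2$. Pulling out the factor $q^{|S|-n/2}$ independent of $i,j$, the remaining double sum factors as a product of the two binomial sums $\sum_i \binom{\dim H_{\tilde S}}{i}q^{\dim H_{\tilde S}-2i} = (q+q^{-1})^{\dim H_{\tilde S}}$ and similarly for $W_{\tilde S}$.

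Combining, we obtain
\begin{equation*}
\chi_q(\cal T(\cal V)) = \sum_{S\subseteq[n]} (-1)^{|S|-n/2} q^{|S|-n/2} (q+q^{-1})^{\dim H_{\tilde S}+\dim W_{\tilde S}},
\end{equation*}
which is exactly the definition (\ref{eq:Jones}) of $J(\cal V)$.

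There is essentially no obstacle here: the global shifts of $-n/2$ in the bi-grading were engineered precisely so that this state sum would come out correctly, and the differential respecting the $q$-grading (which is already built into the way $\deg \delta=(1,0)$ was verified) is what allows passage to homology. The only mild subtlety worth remarking on is that, because of the shift by $n/2$, one must interpret $\chi_q$ using the half-integer convention introduced right before the proposition, which accounts for potential factors of $\sqrt{-1}$ and $\sqrt{q}$; however, these appear symmetrically on both sides and cause no issue.
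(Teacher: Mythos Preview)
Your proposal is correct and follows exactly the approach the paper indicates: the paper's own proof is the single line ``Straightforward from the definition of the grading and the chain groups,'' and you have simply written out that straightforward computation in full detail. There is nothing different in method—just an explicit unpacking of the binomial sums and the observation that $\deg\delta=(1,0)$ lets one pass to homology.
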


\begin{proof}
Straightforward from the definition of the grading and the chain groups. 
\end{proof}

The goal of the rest of this section is to prove various properties of $H_{Kh}^\bullet$. 
The following proposition is worth mentioning on its own right, but it will gain special importance in Section \ref{subsec:links} where it will
be used to show that $H_{Kh}^\bullet$ is well-defined as a link invariant.

\begin{proposition}\label{prop:smallissues}
The cohomology $H_{Kh}^i(\cal V)$ is invariant under permuting the vectors of $\cal V$, and under multiplying a vector $\nu_i$ in $\cal V$ by $-1$.
\end{proposition}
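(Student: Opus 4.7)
My plan is to reduce both parts to the uniqueness clause of Lemma \ref{lem:assign}, which asserts that any two anti-commuting scalar assignments on the cube of chain groups and edge maps give chain-isomorphic complexes.

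For permutation invariance, I would first observe that both the chain groups $T_S$ and the edge maps $\delta_{S,r}$ of (\ref{eq:delta}) depend only on the unordered data -- which subset $S$ of $[n]$ we are at, and which vectors are positive or negative -- and not on the ordering of the vectors. The ordering enters only in the specific choice of the signs $\epsilon_{S,r}$, so Lemma \ref{lem:assign} immediately provides the desired chain isomorphism between $\cal T(\cal V)$ and its permuted version.

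For invariance under $\nu_i\mapsto -\nu_i$, let $\cal V'$ denote the negated arrangement. The orthogonal involution $\sigma_i:\k^n\to\k^n$ negating the $i$-th coordinate restricts to a vector-space isomorphism $W\to W'$, because the defining dependency equations of $W$ and $W'$ differ precisely by flipping the sign of $w_i$. This restriction sends $W_S$ to $W'_S$ for every $S$, and thus induces $\Sigma_i:\wedge^\bullet W_{\tilde S}\to\wedge^\bullet W'_{\tilde S}$. I would then set $\Psi_S := \mathrm{id}_{\wedge^\bullet H_{\tilde S}}\otimes\Sigma_i$. The central computation is to see how the dual vectors transform: since $\nu_j^\vee$ is the orthogonal projection of the standard basis vector $e_j$ onto $W$ and $\sigma_i$ commutes with orthogonal projections, one finds $\Sigma_i(\nu_j^\vee) = \nu_j^{\vee,\mathrm{new}}$ for $j\neq i$ while $\Sigma_i(\nu_i^\vee) = -\nu_i^{\vee,\mathrm{new}}$.

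Next I would check that $\Psi$ is bi-graded (which is immediate since $\dim H_{\tilde S}$ and $\dim W_{\tilde S}$ are unchanged by negating one vector), and then carry out a case analysis over the four types of edge maps in (\ref{eq:delta}). The outcome is that $\Psi_{S\cup r}\circ\delta_{S,r} = \eta_{S,r}\cdot\delta'_{S,r}\circ\Psi_S$ with $\eta_{S,r}\in\{\pm1\}$: the projection and inclusion maps $d$ and $b$ intertwine with $\Sigma_i$ with no sign, while the wedge/contraction maps involving $\nu_r$ or $\nu_r^\vee$ acquire a minus sign exactly when $r=i$ (i.e.\ for Type 2 and Type 4 edges with $r=i$). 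Consequently, transporting the differential on $\cal T(\cal V)$ by $\Psi$ yields a new valid anti-commutative scalar assignment $\epsilon_{S,r}\eta_{S,r}$ on the cube underlying $\cal T(\cal V')$, and a final appeal to Lemma \ref{lem:assign} delivers the chain isomorphism $\cal T(\cal V)\cong\cal T(\cal V')$. The hardest step will be correctly identifying the sign in $\Sigma_i(\nu_i^\vee) = -\nu_i^{\vee,\mathrm{new}}$: the naive guess is that $\nu_i^\vee$ should simply map to $\nu_i^{\vee,\mathrm{new}}$, but the change in the defining equations of $W$ forces the extra sign, and it is precisely this sign that makes the four-type case analysis come out consistently. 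Once that is pinned down, invoking Lemma \ref{lem:assign} spares us an explicit vertex-by-vertex sign-correcting construction.
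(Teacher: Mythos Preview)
Your proposal is correct and follows essentially the same strategy as the paper: identify the chain groups of the two complexes, observe that the edge maps agree up to nonzero scalars, and invoke the homological argument of Lemma~\ref{lem:assign} (contractibility of the cube) to conclude that the two scalar assignments yield isomorphic complexes.

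For the negation case you are in fact more careful than the paper. The paper simply asserts that ``the chain groups are exactly the same, and some differentials may get multiplied by $-1$,'' which is not literally true: the dependency space $W$ genuinely changes as a subspace of $\k^n$ when one vector is negated. Your explicit identification via the orthogonal involution $\sigma_i$, together with the verification that $\Sigma_i(\nu_j^\vee)=\nu_j^{\vee,\mathrm{new}}$ for $j\neq i$ and $\Sigma_i(\nu_i^\vee)=-\nu_i^{\vee,\mathrm{new}}$, fills in exactly the step the paper elides. The subsequent four-type case analysis is straightforward and your conclusion --- that only the Type~2 and Type~4 edges with $r=i$ pick up a sign --- is correct. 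After that, both you and the paper appeal to the same cocycle argument, so the approaches converge.
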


\begin{proof}
Let $\sigma(\cal V)$ be the permuted vector arrangement, and $n_i(\cal V)$ denote the arrangement where $\nu_i$ is replaced by $-\nu_i$.
Note that $\sigma(\cal V)$ has the same chain groups as $\cal V$, permuted. The scalar assignment $\epsilon$ is randomly chosen for a complex 
from the set of scalar assignments which work for it, and may be different for $\cal V$ and $\sigma(\cal V)$. 
Consider the map
$\phi: \cal T(\cal V) \to \cal T(\sigma(\cal V))$ which sends each chain group $T_S$ isomorphically to the corresponding $T_{\sigma(S)}$ 
in $\cal T(\sigma(\cal V))$. This
commutes with the differentials up to scalars, hence defines a 1-cocycle $c_1$ of the hypercube, just like in the proof of Lemma \ref{lem:assign}. Since
the cube is contractible, $c_1$ is the boundary of some 0-cochain $c_0: \{S \subseteq [n]\} \to \k$. This $c_0$ is the adjustment needed for $\phi$ to be a 
chain isomorphism.

The case of $n_i(\cal V)$ is similar: now the chain groups are exactly the same, and some differentials may get multiplied by $-1$. 
The same homological argument works.
\end{proof}

We are now ready to discuss some of the important properties of $\cal T (\cal V)$ and $H_{Kh}^\bullet(\cal V)$, namely Gale duality invariance,
long exact sequences for deletion-restriction, and a K\"unneth Theorem. 

\begin{proposition}\label{prop:GaleInv}
If $\cal V$ and $\cal V^\vee$ are Gale dual signed arrangements, then there is an isomorphism of chain complexes 
$$
	\cal T(\cal V) \cong \cal T(\cal V^\vee).
$$
\end{proposition}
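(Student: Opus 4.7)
The plan is to construct the isomorphism vertex-by-vertex as the super-swap of tensor factors combined with the Gale identifications, and then to verify it intertwines the differentials. Since the sign convention of Gale duality gives $[n]_\pm^\vee = [n]_\mp$, a direct set-theoretic check shows $\tilde S^\vee = \tilde S^c$ for every $S\subseteq[n]$. Combined with $H^\vee_{S'^c}=W_{S'}$ and $W^\vee_{S'^c}=H_{S'}$ from Remark \ref{rm:defGale}, this gives
\[
T_S(\cal V^\vee) = \wedge^\bullet H^\vee_{\tilde S^\vee}\otimes\wedge^\bullet W^\vee_{\tilde S^\vee} \ \cong\ \wedge^\bullet W_{\tilde S}\otimes\wedge^\bullet H_{\tilde S},
\]
and I set $\phi_S\colon T_S(\cal V)\to T_S(\cal V^\vee)$ to be $h\otimes w\mapsto (-1)^{|h|\,|w|}\, w\otimes h$ followed by these identifications. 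The bi-grading is preserved because $\dim H_{\tilde S}=\dim W^\vee_{\tilde S^\vee}$ and $\dim W_{\tilde S}=\dim H^\vee_{\tilde S^\vee}$.

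The heart of the proof is checking that $\phi=\bigoplus_S\phi_S$ intertwines $\delta$. Two observations are key. First, a standard matroid duality identity: for $r\notin T$, $\nu_r\in V_T$ if and only if $\nu_r^\vee\notin V^\vee_{T^c-r}$; this follows from $\mathrm{rk}^\vee(S)=|S|+\mathrm{rk}(S^c)-\mathrm{rk}([n])$, or directly from the relation $\nu_i+\nu_i^\vee=e_i$ in Remark \ref{rm:defGale}. Applied with $T=\tilde S$ when $r\in[n]_+$ and $T=\tilde S-r=\widetilde{S\cup r}$ when $r\in[n]_-$, this pairs the four types of $\delta$ across Gale duality as Type 1 $\leftrightarrow$ Type 3 and Type 2 $\leftrightarrow$ Type 4. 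Second, under the identifications above the elementary maps dualize cleanly: orthogonal projections go to orthogonal projections ($d^H\leftrightarrow b^{W^\vee}$ and $b^W\leftrightarrow d^{H^\vee}$), inclusions to inclusions ($d^W\leftrightarrow b^{H^\vee}$ and $b^H\leftrightarrow d^{W^\vee}$), and wedging by $\nu_r^\vee\in W$ is literally the same operation whether interpreted on $W_{\tilde S}$ (as $w^W$ in $\cal V$) or on $H^\vee_{\tilde S^\vee}$ (as the corresponding wedge map in $\cal V^\vee$), because these are the same subspace of $W$. A case-by-case check of the four types then confirms $\phi_{S\cup r}\circ\delta_{S,r}^{\cal V}=\delta_{S,r}^{\cal V^\vee}\circ\phi_S$ up to scalar.

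Finally, the scalar assignments $\epsilon_{S,r}$ are determined only up to a vertex twist by Lemma \ref{lem:assign}, so the scalars pushed through $\phi$ give a valid anti-commutative edge assignment for $\cal V^\vee$; we may either adopt these as our $\epsilon^\vee$, or twist each $\phi_S$ by a vertex scalar to match a prescribed assignment, obtaining an honest isomorphism of chain complexes in either case. The main obstacle is the case-by-case bookkeeping in the second paragraph: matching the four edge-map types across the Gale correspondence while keeping signs and directions of projections straight. The explicit matchings listed above reduce this to a finite check.
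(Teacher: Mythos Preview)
Your proof is correct and follows essentially the same approach as the paper's: identify $T_S(\cal V^\vee)$ with $T_S(\cal V)$ via the tensor-swap using the Gale identities $H^\vee_{\tilde S^c}=W_{\tilde S}$, $W^\vee_{\tilde S^c}=H_{\tilde S}$, and then check that the edge maps of Types~1/2 correspond to Types~3/4 under this identification. Your version is more detailed in spelling out the matroid-duality criterion and the elementary map correspondences, and slightly more cautious in only asserting commutativity up to scalar before invoking Lemma~\ref{lem:assign}; the paper simply asserts exact commutativity when the same scalar assignment is used on both sides, which one can verify directly (in each of the four types at least one tensor factor map has exterior degree~$0$, and the wedge map by $\nu_r^\vee$ is literally the same operation on $W_{\tilde S}=H^\vee_{\tilde S^c}$).
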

\begin{proof}
For brevity, let us denote $T_S(\cal V)$ by just $T_S$, and $T_{S^\vee}(\cal V^\vee)$ by $T_{S^\vee}^\vee$. 
$S^\vee$ is different from $S$ in that the positive and negative signs are exchanged, hence
$\widetilde{S^{\vee}}=(\tilde{S}^c)^\vee=(\tilde{S}^\vee)^c$.
Observe that
$$T_{S^\vee}^\vee \cong \extp^\bullet H_{\widetilde{S^\vee}}^\vee \otimes \extp^\bullet W_{\widetilde{S^\vee}}^\vee 
\cong \extp^\bullet W_{\widetilde{S^{\vee}}^c} \otimes \extp^\bullet H_{\widetilde{S^{\vee}}^c}
\cong \extp^\bullet W_{\tilde{S}^\vee} \otimes \extp^\bullet H_{\tilde{S}^\vee}
\cong \extp^\bullet W_{\tilde{S}} \otimes \extp^\bullet H_{\tilde{S}}
\stackrel{\sigma}{\cong} T_S.$$ 
We claim that $\sigma$, which is the isomorphism of chain groups given by the switching of tensor factors, is actually an isomorphism of chain complexes.

Suppose $r \notin S$ and $r \in [n]_+$. Then $r \in [n]_-^\vee$
and $\nu_r \notin V_{\tilde{S}}$ if and only if $\nu_r^\vee \in V_{\widetilde{S^\vee}-r}$. So the Gale dual of a Type 1 differential is a Type 3
differential, and similarly the Gale dual of a Type 2 differential is a Type 4 differential.
Thus, using the same scalar assignments in the complexes $\cal T(\cal V)$ and $\cal T(\cal V^\vee)$, it follows that  $\sigma$ commutes 
with the differentials and that $\sigma$ is an isomorphism of chain complexes.  (If we used different scalar assignments in the differentials 
for $\cal T(\cal V)$ and $\cal T(\cal V^\vee)$, we would have to modify $\sigma$ accordingly in order to get a genuine chain map.)
\end{proof}

Now suppose that we are given a deletion restriction triple $\{\cal V, \cal V^{\nu_l}, \cal V-\nu_l\}$. The following 
theorem (which is a generalized analogue of the skein sequence for odd Khovanov homology) is similar to the long exact sequence
theorems of Section \ref{subsec:props}, with the difference that the sign of $\nu_l$ determines the direction of the maps.

\begin{theorem}\label{thm:lesKhov}
There is long exact sequence of homology groups, depending on the sign of $l$. If $l \in [n]_+$ and $\{\}$ denotes a shift of the $q$-grading, then
\begin{equation}\label{eq:lesplus}
 ...\to H_{Kh}^{i-1/2}(\cal V^{\nu_l})\{1/2\} \to H_{Kh}^i(\cal V) \to H_{Kh}^{i-1/2}(\cal V - \nu_l)\{1/2\} 
 \stackrel{\gamma_+}{\longrightarrow} H_{Kh}^{i+1/2}(\cal V^{\nu_l})\{1/2\} \to...
\end{equation}
where 
\begin{equation}\label{eq:gammaplus}
\gamma_+(x \otimes y)=\begin{cases}
                        x \otimes y & \text{ if } \nu_l \notin V_{\tilde S} \\
			x \otimes (\nu_l^\vee \wedge y) & \text{ if } \nu_l \in V_{\tilde S}.
                       \end{cases}
\end{equation}
If $l \in [n]_-$, then 
\begin{equation}\label{eq:lesminus}
...\to H_{Kh}^{i-1/2}(\cal V-\nu_l)\{1/2\} \to H_{Kh}^i(\cal V) \to H_{Kh}^{i-1/2}(\cal V^{\nu_l})\{1/2\} 
\stackrel{\gamma_-}{\longrightarrow} H_{Kh}^{i+1/2}(\cal V-\nu_l)\{1/2\} \to...
\end{equation}
where 
\begin{equation}\label{eq:gammaminus}
\gamma_-(x \otimes y)=\begin{cases}
                        (\nu_l \wedge x) \otimes y & \text{ if } \nu_l \notin V_{\tilde S-l} \\
			x \otimes y & \text{ if } \nu_l \in V_{\tilde S-l}.
                       \end{cases}
\end{equation}
\end{theorem}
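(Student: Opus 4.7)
The plan is to construct a short exact sequence of bi-graded chain complexes
$$0 \to \cal T' \stackrel{\iota}{\to} \cal T(\cal V) \stackrel{\pi}{\to} \cal T'' \to 0$$
by partitioning the hypercube vertex set $\{S\subseteq [n]\}$ into the $l$-containing and $l$-avoiding subsets. Since the differential $\delta$ only enlarges $|S|$, the $l$-containing subsets form a subcomplex $\cal T'$ and the $l$-avoiding subsets form the quotient $\cal T''$; the long exact sequences~(\ref{eq:lesplus}) and~(\ref{eq:lesminus}) will then come from the snake lemma. Which of $\cal T(\cal V^{\nu_l})$ or $\cal T(\cal V-\nu_l)$ plays the role of $\cal T'$ (and which the role of $\cal T''$) depends on the sign of $l$, and this dependence is precisely what produces the two different statements of the theorem.

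The central bookkeeping is the description of $\widetilde{S'\cup l}$ for $l\notin S'$. When $l\in[n]_+$, $l$ is absent from $\tilde{S'}$ and $\widetilde{S'\cup l}=\tilde{S'}\cup l$; the identifications $H^{H_l}_{\tilde{S'}}\cong H_{\tilde{S'}\cup l}$ and $W^{H_l}_{\tilde{S'}}\cong W_{\tilde{S'}\cup l}$ from Section~\ref{subsec:delres} then show that the chain group $T_{S'\cup l}$ of $\cal T(\cal V)$ is canonically isomorphic to the chain group at $S'$ of $\cal T(\cal V^{\nu_l})$, while $T_{S'}$ agrees with the chain group at $S'$ of $\cal T(\cal V-\nu_l)$. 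When $l\in[n]_-$, instead $l\in\tilde{S'}$ and $\widetilde{S'\cup l}=\tilde{S'}-l$, so the identifications swap: $\cal T'\cong\cal T(\cal V-\nu_l)$ and $\cal T''\cong\cal T(\cal V^{\nu_l})$. The bi-degree shifts by $(1/2,1/2)$ in both long exact sequences come from comparing the global $-n/2$ shift in $\cal T(\cal V)$ with the $-(n-1)/2$ shift in the arrangements with one fewer vector. Finally, the scalar edge weights $\epsilon_{S,r}$ on the three complexes must be chosen compatibly so that $\iota$ and $\pi$ are genuine chain maps; by the uniqueness argument of Lemma~\ref{lem:assign} this can always be arranged after an element-wise rescaling of chain groups, in the same spirit as the passage from $\iota$ to $\iota'$ in the proof of Theorem~\ref{thm:les}.

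With the short exact sequence in place, the connecting homomorphism follows directly from the snake lemma: for a cycle $x\otimes y\in\cal T''$ representing a class at $S'$, lift to $T_{S'}\subset\cal T(\cal V)$, apply $\delta$, and observe that the only component landing in $\cal T'$ is the image under the single $l$-edge $\delta_{S',l}$. By~(\ref{eq:delta}), when $l\in[n]_+$ this edge is of Type~1 ($d\otimes d$) when $\nu_l\notin V_{\tilde{S'}}$ and of Type~2 ($d\otimes w$) when $\nu_l\in V_{\tilde{S'}}$; translating through the identification $\cal T'\cong\cal T(\cal V^{\nu_l})$ and recalling that the wedge factor $w_{\tilde{S'},l}$ on the $W$ side is wedging with $\nu_l^\vee$, we recover precisely formula~(\ref{eq:gammaplus}) for $\gamma_+$. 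The analogous reading of Types~3 and~4 for $l\in[n]_-$ produces formula~(\ref{eq:gammaminus}) for $\gamma_-$.

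The main obstacle will be the bookkeeping: reconciling the bi-grading shifts across the three complexes, and arranging the edge-sign scalars $\epsilon_{S,r}$ compatibly on $\cal T(\cal V)$, $\cal T(\cal V^{\nu_l})$, and $\cal T(\cal V-\nu_l)$ so that the inclusion and projection become genuine chain maps of bi-graded complexes. Once these are in hand, the remainder of the proof reduces to the snake lemma together with a direct reading of the edge maps in~(\ref{eq:delta}).
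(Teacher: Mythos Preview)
Your proposal is correct and follows essentially the same approach as the paper's proof: split the hypercube according to whether $l\in S$, identify the resulting subcomplex and quotient with $\cal T(\cal V^{\nu_l})$ and $\cal T(\cal V-\nu_l)$ (or vice versa, depending on the sign of $l$) via the identifications of Section~\ref{subsec:delres}, adjust the edge scalars as in Lemma~\ref{lem:assign} so that $\iota$ and $\pi$ are honest chain maps, and read off the connecting homomorphism from the snake lemma. Your explicit identification of $\gamma_\pm$ with the $l$-edge map $\delta_{S',l}$, matching the Type~1/2 and Type~3/4 cases of~(\ref{eq:delta}) to the two branches of~(\ref{eq:gammaplus}) and~(\ref{eq:gammaminus}), is in fact more detailed than what the paper records.
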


\begin{proof}
 The proof is along the same lines as the proofs of the long exact sequence theorems of Section \ref{subsec:props}, we outline the positive case.
We need to show that there is a short exact sequence of chain complexes
$$0\longrightarrow \cal T(\cal V^{\nu_l})[1/2]\{1/2\}\stackrel{\iota}{\longrightarrow} \cal T(\cal V) \stackrel{\pi}{\longrightarrow} \cal T(\cal V -\nu_l)[1/2]\{1/2\}.$$
Note that, ignoring the grading for a moment, 
$$\cal T(\cal V^{\nu_l})=\bigoplus_{l \notin S} T_S^{\nu_l}=\bigoplus_{l\notin S} \extp^\bullet H_{\tilde{S}}^{\nu_l} \otimes \extp^\bullet W_{\tilde{S}}^{\nu_l}\cong
\bigoplus_{l\notin S} \extp^\bullet H_{\widetilde{S\cup l}} \otimes \extp^\bullet W_{\widetilde{S\cup l}},
$$
so we define $\iota$ to be the natural inclusion into $\cal T(\cal V)$. This increases the homological degree and the $q$-degree by $1/2$ each, hence the grading
shifts above. In turn, $\pi$ sends each $T_S$ for $l\notin S$ to itself in $\cal T(\cal V-\nu_l)$, and everything else to zero. Note that this is a map of bi-degree
$(-1/2,-1/2)$, hence with the grading shift as above it is of degree zero.

The maps $\iota$ and $\pi$ are chain maps up to possible scalar obstructions that may arise from different choices of scalar assignments in the three complexes.
This can be eliminated by applying a suitable scalar adjustment to $\iota$ and $\pi$.
The map $\gamma_+$ is the map arising from the snake lemma.
\end{proof}

The proof of the K\"unneth Theorem \ref{thm:kunneth} applies without any adjustment, so $H_{Kh}$ also satisfies a K\"unneth Theorem:

\begin{proposition}
 For two signed vector arrangements $\cal V$ and $\cal V'$ and the product arrangement $\cal V \times \cal V'$, there
is an isomorphism
$$H^\bullet_{Kh}(\cal V \times \cal V')\cong H^\bullet_{Kh}(\cal V) \otimes H^\bullet_{Kh}(\cal V').$$
\end{proposition}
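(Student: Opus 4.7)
The plan is to mirror the proof of Theorem \ref{thm:kunneth}, reducing the statement to identifying $\cal T(\cal V \times \cal V')$ with the tensor product chain complex $\cal T(\cal V) \otimes \cal T(\cal V')$ and then invoking the ordinary K\"unneth formula over the field $\k$.

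First I would establish the combinatorial setup. Subsets $U \subseteq [n+m]$ biject with pairs $(S,T)$, $S\subseteq [n]$, $T\subseteq [m]$, and this bijection respects the sign decomposition since $[n+m]_\pm = [n]_\pm \sqcup [m]_\pm$; in particular $\tilde U = \tilde S \sqcup \tilde T$. In the product arrangement the defining vectors of $\cal V$ and of $\cal V'$ lie in orthogonal summands of $V\oplus V'$, so $H_{\tilde U} = H_{\tilde S}\times H_{\tilde T}$ and dually $W_{\tilde U} = W_{\tilde S}\times W_{\tilde T}$ inside $\k^{n+m}$. Combined with the supertensor identity $\wedge^\bullet(A\oplus B)\cong \wedge^\bullet A \otimes \wedge^\bullet B$ from Remark \ref{rmk:super}, this gives $T_U \cong T_S\otimes T_T$. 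Summing over $U$ and noting that the bi-grading shifts split additively (since $(n+m)/2 = n/2+m/2$ and $|U| = |S|+|T|$), I obtain an isomorphism of bi-graded vector spaces $\cal T(\cal V\times\cal V')\cong \cal T(\cal V)\otimes\cal T(\cal V')$.

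Second I would verify that the differentials match. For an edge of the product cube that modifies only the first factor, say $(S,T)\to (S\cup r,T)$ with $r\in[n]$, the Type of $\delta_{(S,T),r}$ from (\ref{eq:delta}) is determined by whether $\nu_r$ lies in $V_{\widetilde{(S,T)}}$ or in its $r$-deletion. Since $V_{\widetilde{(S,T)}} = V_{\tilde S}\oplus V_{\tilde T}$ and $\nu_r$ lies in the first summand, this reduces to the analogous condition in $\cal V$ alone; hence $\delta_{(S,T),r}$ acts as $\delta_{S,r}\otimes 1$ on $T_S\otimes T_T$. Edges modifying the second factor give $1\otimes \delta_{T,r'}$ in the same way. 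I would then choose the scalar assignment $\epsilon$ for $\cal T(\cal V\times\cal V')$ to be the product of those of the factors with the standard Koszul sign on the $\cal V'$-edges; this makes the product hypercube anti-commutative, and by the uniqueness clause of Lemma \ref{lem:assign} any other valid assignment yields an isomorphic complex. With this choice, the identification of the previous paragraph becomes a genuine isomorphism of chain complexes, compatible with the superalgebra tensor-product differential.

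Finally, applying the K\"unneth formula for chain complexes of $\k$-vector spaces yields the desired isomorphism $H_{Kh}^\bullet(\cal V\times\cal V')\cong H_{Kh}^\bullet(\cal V)\otimes H_{Kh}^\bullet(\cal V')$, with both gradings preserved. The main obstacle is the scalar bookkeeping in the previous step: one must check that the product assignment is actually anti-commutative and that the Koszul sign correctly interacts with the Type-dependent conventions already baked into each $\epsilon$. This is routine but tedious, and is essentially the same issue already handled in Theorem \ref{thm:kunneth}; everything else reduces to unwinding definitions.
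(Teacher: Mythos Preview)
Your proposal is correct and follows essentially the same approach as the paper, which simply asserts that the proof of Theorem~\ref{thm:kunneth} applies without adjustment. You actually supply more detail than the paper does, in particular by explicitly addressing the scalar assignments $\epsilon_{S,r}$ via the uniqueness clause of Lemma~\ref{lem:assign}; this is the one genuinely new wrinkle in the signed case, and your handling of it is appropriate.
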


\subsection{Reidemeister Invariance}\label{subsec:reidinv}
\begin{theorem}\label{thm:Reidemeister}
If $\cal V$ and $\cal V'$ are signed arrangements which differ by a framed Reidemeister move (as listed in Section \ref{subsec:SignedArrnments}), 
then $\cal T(\cal V)$ and $\cal T(\cal V')$ are chain homotopy equivalent.
\end{theorem}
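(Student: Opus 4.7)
By Gale duality (Proposition \ref{prop:GaleInv}), together with the observation in Proposition \ref{prop:DualPairs} that the Reidemeister moves come in Gale dual pairs, it suffices to verify chain homotopy equivalence under one move from each pair; I will treat $wR1$, $R2$, and $R3$, and the dual moves $wR1^\vee, R2^\vee, R3^\vee$ follow formally. In each case the goal is an explicit chain homotopy equivalence $\cal T(\cal V) \simeq \cal T(\cal V')$, not merely a homology isomorphism.

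The main technical tool is the mapping cone presentation of $\cal T(\cal V)$ underlying the short exact sequence of Theorem \ref{thm:lesKhov}: for any vector $\nu_l \in \cal V$, the complex $\cal T(\cal V)$ is the mapping cone of a chain map between appropriately shifted copies of $\cal T(\cal V - \nu_l)$ and $\cal T(\cal V^{\nu_l})$, with direction depending on the sign of $l$. For $wR1$, where $\nu_l = \nu_m = 0$ have opposite signs, I apply this decomposition to $\nu_l$; because $H_l = V$, the restriction is essentially the arrangement obtained by treating $\nu_l$ as absent, so the two halves of the cone are closely related, and a direct analysis of the connecting differential (controlled by the presence of the second zero vector $\nu_m$) exhibits an acyclic summand whose cancellation yields $\cal T(\cal V - \{\nu_l, \nu_m\})$ up to chain homotopy. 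For $R2$, where $\nu_l = \alpha \nu_m$ with opposite signs, I iterate the cone construction on both $\nu_l$ and $\nu_m$; two of the four corners reduce to lower-dimensional arrangements in which the partner vector has become a zero vector (by orthogonal projection), enabling cancellation with their opposite-signed counterparts and producing $\cal T(\cal V - \{\nu_l, \nu_m\})$ after an explicit homotopy.

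The $R3$ case is by far the most delicate, and I expect it to be the main obstacle. The local model involves three vectors $\nu_l, \nu_m, \nu_p$ with a prescribed relation $\nu_l^\vee = \alpha_m \nu_m^\vee + \alpha_p \nu_p^\vee$, and the move flips the sign of $\nu_l$ while replacing it by a rescaled $\nu_l' = \alpha_m \nu_m + \alpha_p \nu_p$ lying in $\nu_l^\perp$. The plan is to iterate the mapping cone construction three times on each side (once for each of $\nu_l, \nu_m, \nu_p$), producing cubical descriptions of $\cal T(\cal V)$ and $\cal T(\cal V')$ in terms of the common subarrangement $\cal V - \{\nu_l, \nu_m, \nu_p\}$ and its deletion/restriction combinations, and then matching corresponding vertices by identifications that exist precisely because of the $R3$ relation. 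The main technical challenge will be tracking the scalar obstructions identified in Lemma \ref{lem:almostcomm} through the iterated cone decomposition and reconciling the sign and scalar discrepancies between the two cubes; the flexibility in the edge-sign assignments guaranteed by Lemma \ref{lem:assign}, together with the specific coefficients $\alpha_m, \alpha_p$ appearing in the $R3$ relation, is what should make such a reconciliation possible. Once the vertex-by-vertex matching is set up, the chain homotopy itself can be constructed by hand using the wedge and contraction operations on the local exterior algebra factors.
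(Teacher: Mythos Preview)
Your overall strategy matches the paper's: reduce by Gale duality to one move per pair, then perform explicit cancellations in the cube. The paper chooses to verify $wR1^\vee$, $R2^\vee$, $R3^\vee$ (rather than your $wR1$, $R2$, $R3$) and phrases the reductions in Bar-Natan's language of identifying acyclic subcomplexes and quotienting, rather than your mapping-cone language; these are equivalent formulations, and your choice of the non-$\vee$ moves simply dualizes the roles of the $H$ and $W$ factors in each argument.

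The one place your plan diverges noticeably is $R3$. You propose to expand both sides into full three-dimensional cubes over the common subarrangement and then match all eight vertices directly, tracking scalars throughout. The paper instead reuses the $R2^\vee$ argument as a subroutine: on each side it locates a vector (coming from the relation $\nu_l^\vee = \alpha_m\nu_m^\vee + \alpha_p\nu_p^\vee$) lying in the appropriate $H_{\tilde S}$ or $W_{\tilde S}$, uses it to split off and cancel an acyclic piece exactly as in the $R2$ proof, and only then compares the reduced complexes, which at that point have five terms each and admit an explicit isomorphism $\Phi$. Your direct eight-vertex matching should also work, but it is more laborious and the correspondence between vertices is less transparent because the sign of $l$ and the vector $\nu_l$ both change; the paper's reduce-first approach makes the identification essentially immediate.
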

\begin{proof}
By Proposition \ref{prop:DualPairs}, which states that Reidemeister moves come in Gale dual pairs, 
and Proposition \ref{prop:GaleInv}, which claims that $H_{Kh}$ is Gale duality--invariant,
it is enough to show Reidemeister invariance for one member of each Gale dual pair. 
We choose to prove $wR1^\vee$, $R2^\vee$ and $R3^\vee$.
Like \cite{OzsvathRasmussenSzabo}, we essentially follow the method and exposition of \cite[Section 3.5]{BarNatan}.

To show $wR1^\vee$, suppose that $l \in [n]_+$, $m \in [n]_-$, and $\nu_l^\vee=\nu_m^\vee=0$.
The chain complex $\cal T (\cal V)$ can be written as a direct sum of two faces of the cube, one for sets which contain $l$,
and one for those which do not:
$$\cal T (\cal V)=\bigoplus_{l \notin S} T_S \oplus \bigoplus_{l \notin S} T_{S \cup l}.$$
If $l \notin S$, then $\nu_l \notin V_{\tilde{S}}$: this is implied by the assumption that $\nu_l^\vee=0$. Hence 
$H_{\tilde{S}}\cong \k \nu_l \oplus H_{\tilde{S}\cup l}$. This means that
$$T_S=\extp^\bullet H_{\tilde{S}} \otimes \extp^\bullet W_{\tilde{S}}\cong 
(\extp^\bullet H_{\tilde{S}\cup l} \otimes \extp^\bullet W_{\tilde{S}}) \oplus (\nu_l \wedge (\extp^\bullet H_{\tilde{S}_\cup l}) \otimes \extp^\bullet W_{\tilde{S}}).$$
The differential $\delta_{S,l}$ is of Type 1, hence it is the identity on the first summand above, meaning that
$$\cal T'= \bigoplus_{l\notin S} \extp^\bullet H_{\tilde{S}\cup l} \otimes \extp^\bullet W_{\tilde{S}} \stackrel{\delta_{S,l}}{\longrightarrow} \bigoplus T_{S \cup l}$$
is an acyclic sub-complex, so the homology of $\cal T(\cal V)$ doesn't change if we factor out by $\cal T'$. After the factorization what remains is 
$\bigoplus_{l \notin S} \nu_l \wedge (\extp^\bullet H_{\tilde{S}_\cup l}) \otimes \extp^\bullet W_{\tilde{S}}$, which is isomorphic to $\cal T(\cal V^{\nu_l})$
up to a bi-degree shift of $(1/2,3/2)$, so $H^\bullet_{Kh}(\cal V^{\nu_l})\cong H^\bullet_{Kh}(\cal V)[1/2]\{3/2\}$.

We can similarly analyze the case of contracting $\nu_m\in [n]_-$. In this case for any $m\notin S$,
$$T_{S\cup m}=\extp^\bullet H_{\widetilde{S\cup m}} \otimes \extp^\bullet W_{\widetilde{S\cup m}} \cong
(\extp^\bullet H_{\tilde{S}} \otimes \extp^\bullet W_{\tilde{S}}) \oplus (\nu_m\wedge (\extp^\bullet H_{\tilde{S}}) \otimes \extp^\bullet W_{\tilde{S}}).$$
There is an acyclic sub-complex 
$$\bigoplus_{m\notin S} \extp^\bullet H_{\tilde{S}} \otimes \extp^\bullet W_{\tilde{S}} \stackrel{\delta_{S,m}}{\longrightarrow}
\nu_m\wedge (\extp^\bullet H_{\tilde{S}}) \otimes \extp^\bullet W_{\tilde{S}},$$
and factoring out by this leads to the isomorphism $H^\bullet_{Kh}(\cal V^{\nu_m})\cong H^\bullet_{Kh}(\cal V)[-1/2]\{-3/2\}$.

Thus, when contracting both $\nu_l$ and $\nu_m$, the two degree shifts cancel out and there is an isomorphism 
$H^\bullet_{Kh}(\cal V^{\{\nu_l,\nu_m\}})\cong H^\bullet_{Kh}(\cal V)$.

\parpic[l]{$\xymatrix{
  \bigoplus T_{S \cup m} \ar[r] & \bigoplus T_{S \cup l \cup m}  \\
  \bigoplus_{l,m \notin S} T_S \ar[r]^{w \otimes b} \ar[u] & \bigoplus T_{S \cup l} \ar[u]^{d \otimes d}
}$}
Moving on to $R2^\vee$, suppose that $l \in [n]_-$ and $m \in [n]_+$, $\nu_l^\vee=\alpha\nu_m^\vee$, $\alpha\neq 0$ and $\nu_l^\vee\neq 0$. 
We write the cube $\cal T(\cal V)$ as a direct sum of four faces, 
according to the incidence of $l$ and $m$ in $S$,
as shown on the left. In the bottom right corner, we observe that
$\nu_l, \nu_m \notin V_{\widetilde{S \cup l}}$, since otherwise there would be a linear dependency involving
only one of $\nu_l$ or $\nu_m$, contradicting $\nu_l^\vee = \alpha \nu_m^\vee$. (Keep in mind that 
$\widetilde{S \cup l}$ includes neither $l$ nor $m$.)

It follows that both $H_{\widetilde{S}}$ and $H_{\widetilde{S\cup l\cup m}}$ are strictly smaller than $H_{\widetilde{S\cup l}}$.
Therefore, all components of the bottom horizontal differential are of Type~4 (wedge by $\nu_l$ on the $H$ tensor factor and identity on the $W$ factor), 
and all components of the right vertical differential are of Type~1 (projection on the $H$ factor and identity on $W$).

{\bf Case 1.} Let us first assume that $\langle \nu_l,\nu_m \rangle \neq 0$ (we remark that this is always
the case for signed arrangements which arise from links or signed graphs). Note that $H_{\widetilde{S\cup l}}\cong H_{\widetilde{S}} \oplus \k\nu_l$,
so 
$$T_{S\cup l}\cong \extp^\bullet H_{\widetilde{S}} \otimes \extp^\bullet W_{\widetilde{S\cup l}}
\oplus (\nu_l \wedge \extp^\bullet H_{\widetilde{S}}) \otimes \extp^\bullet W_{\widetilde{S\cup l}}.$$
Since  $\langle \nu_l,\nu_m \rangle \neq 0$, the orthogonal projection of 
$H_{\widetilde{S\cup l}}$ to $H_{\widetilde{S\cup l\cup m}}$ is an isomorphism when restricted to
the $H_{\widetilde{S}}$, and 
there is an acyclic subcomplex 
$$\cal T'=\bigoplus \extp^ \bullet H_{\widetilde{S}} \otimes \extp^\bullet W_{\widetilde{S \cup l}}
\xrightarrow{\bigoplus \delta_{S \cup l,m}} \bigoplus T_{S \cup l \cup m}.$$

\parpic[r]{$\xymatrix{
  \bigoplus T_{S \cup m} \ar[r] & 0  \\
  \bigoplus_{l,m \notin S} T_S \ar[r] \ar[u] 
& \bigoplus (\nu_l \wedge \extp^\bullet\! H_{\widetilde{S}})\! \otimes\! \extp^\bullet W_{\widetilde{S \cup l}}\ar[u]\ar[lu]_\tau
}$}
Factoring out by $\cal T'$, we get the complex on the right. Note that the lower horizontal differential
(that is, wedge by $\nu_l$ on the fist tensor factor) is an isomorphism,
so it can be inverted and composed with the differential going up to produce a map $\tau$ from the 
lower right corner to the upper left corner. Now consider the sub-complex $\cal T''$ given by 
all elements $\alpha$ in the lower left corner, and pairs 
$$(\beta, \tau(\beta)) \in 
\bigoplus \left(\nu_l \wedge \extp^\bullet H_{\widetilde{S}}) \otimes \extp^\bullet W_{\widetilde{S \cup l}}\right)\oplus \bigoplus T_{S \cup m}.$$
This complex is acyclic due to the lower horizontal differential being an isomorphism.

Factoring out by $\cal T''$, we claim that the result is $\cal T'''\cong \bigoplus T_{S \cup m}$. To see this, note that factoring out by 
pairs $(\beta, \tau(\beta))$ identifies each element $(\beta,0)$ -- that is, $\beta$ in the bottom right corner and 0 in the top left -- with $(0, \tau(\beta))$. 
So while the free choice of $\alpha$ ``kills'' the bottom left corner, the free choice of $\beta$ identifies everything at the bottom right with something in the top left.
Hence, all that is left is a free choice on the top left.

Finally, observe that $\cal T'''\cong \bigoplus T_{S \cup m}$ is in turn isomorphic to $\cal T(\cal V^{\nu_l,\nu_m})$: as we saw in the proof of Theorem \ref{thm:lesKhov},
$\cal T(\cal V^{\nu_m})$ is a subcomplex (up to shifts) of $\cal T(\cal V)$, namely, the top row of the original square. In turn, as in the second half of
Theorem \ref{thm:lesKhov}, $\cal T(\cal V^{\nu_l,\nu_m})$ is a quotient of this by the top right corner of the original square, 
leaving the top left corner $\bigoplus T_{S \cup m}$ 
as needed, and the grading
shifts cancel with the global shift.

{\bf Case 2.} The case of $\langle \nu_l,\nu_m \rangle=0$ is similar if slightly more complicated. 
Note that in this case the vector $(\nu_l+\nu_m)$ is not orthogonal to either $\nu_l$
or $\nu_m$. Let $\bar{H}_S$ denote the orthogonal complement of $(\nu_l+\nu_m)$ in $H_{\widetilde{S\cup l}}$. 
We can write $H_{\widetilde{S\cup l}}\cong \bar{H}_S \oplus \k(\nu_l+\nu_m)$, and 
$$T_{S\cup l}\cong \extp^\bullet \bar{H}_S \otimes \extp^\bullet W_{\widetilde{S\cup l}}
\oplus ((\nu_l+\nu_m) \wedge \extp^\bullet \bar{H}_S) \otimes \extp^\bullet W_{\widetilde{S\cup l}}.$$
Again, the right vertical differential is an isomorphism when restricted to the first summand, and so there
is an acyclic subcomplex $\mathcal T'$ as before.

\parpic[r]{$\xymatrix{
  \bigoplus T_{S \cup m} \ar[r] & 0  \\
  \bigoplus_{l,m \notin S} T_S \ar[r] \ar[u] 
& \bigoplus ((\nu_l+\nu_m) \wedge \extp^\bullet\! \bar{H}_S)\! \otimes\! \extp^\bullet W_{\widetilde{S \cup l}}\ar[u]\ar[lu]_\tau
}$}
Factoring out by $\cal T'$, we get the complex on the right. Note that the lower horizontal differential on the first tensor factor of $T_S$
is the composition of wedging by $\nu_l$ folowed by the projection from 
$\extp^\bullet H_{\widetilde{S\cup l}}\cong \extp^\bullet \bar{H}_S\oplus ((\nu_l+\nu_m) \wedge \extp^\bullet\! \bar{H}_S) $ to its second component 
$(\nu_l+\nu_m) \wedge \extp^\bullet\! \bar{H}_S$.
Note that for any non-zero $x\in \extp^\bullet H_{\widetilde{S}}$ we have $\nu_l\wedge x \notin \extp^\bullet \bar{H}_S$, hence the composition
is injective and therefore an isomorphism. From here on the previous argument goes through, finishing the proof of the Reidemeister 2 invariance.

Recall that $R3^\vee$ states that if there are three distinct vectors $\nu_l, \nu_m$ and $\nu_p$ in $\cal V$ with $l,m \in [n]_-$ and $p \in [n]_+$, 
and a linear dependence $\nu_l^\vee=\alpha_m \nu_m^\vee+ \alpha_p \nu_p^\vee$
with non-zero coefficients, then $\cal V \leftrightarrow \cal V'$, where $\cal V'$ is the arrangement obtained from $\cal V^{\nu_l}$ 
by adding an extra vector $\nu_l'=\alpha_p \nu_m-\alpha_m\nu_p$ of
{\em positive} sign.

To prove $R3^\vee$ we will consider the complexes for both $\cal V$ and $\cal V'$ and reduce each one until we get isomorphic complexes.
Both $\cal T(\cal V)$ and $\cal T(\cal V')$ can be written as three dimensional cubes according to the incidence of $l$, $m$ and $p$
in $S$. We will first deal with the top faces of these cubes, which include the sets $S$ for which $l \in S$.

In the case of $\cal T(\cal V)$, note that $l\in [n]_-$, so $l\in S$ means $l \notin \widetilde{S}$. 
We can play the same game as in the proof of $R2^\vee$ above, with $m$ and $p$ playing the role of $l$ and $m$ in the proof of R2, respectively. 
Using the same steps as in the $R2$ proof, we can reduce $\cal T(\cal V)$
to the complex $\cal T'''(\cal V)$, as shown in the top row of Figure \ref{fig:R3proof}:
what remains is the bottom level of the original cube, two chain groups of the top level are gone, and everything in the remaining right corner
is identified via $\tau$ with something at the remaining left corner (as indicated by the ``$=$'' sign over the arrow). The figure shows
the outcome in the Case 1 scenario, Case 2 only differs in what exactly remains of $T_{S\cup l \cup m}$.

\parpic(2in,0.95in)[r]{$\xymatrix{
  \bigoplus T'_{S \cup p} \ar[r]^{b\otimes b} & \bigoplus T'_{S \cup m \cup p}  \\
  \displaystyle{\bigoplus_{l\in S; \; m,p \notin S} \!\!\!\! T'_S} \ar[r] \ar[u]^{d\otimes w} & \bigoplus T'_{S \cup m} \ar[u]
}$}
As for $\cal T(\cal V')= \bigoplus_S T'_S= \extp^\bullet H'_{\tilde{S}} \otimes \extp^\bullet W'_{\tilde{S}}$, 
there is a similar reduction process for the top level of the cube shown on the right. 
In particular, note that $\nu_l', \nu_m, \nu_p \in \widetilde{S\cup p}$,
and $W_{\widetilde{S \cup p}}$ is strictly larger than $W_{\widetilde{S}}$ or $W_{\widetilde{S\cup m \cup p}}$.
This means that the left and top differentials are Type 2 (wedge by $\nu_p^\vee$) and Type 3 (projection on the $W$ factor),
respectively.

\begin{figure}
$  \begin{array}{c} \xymatrix@R=3mm@C=3mm{
    & \bigoplus T_{S\cup l \cup p} \ar@{->}[rr]
        \ar@{<-}'[d][dd]^d
        \ar@{<-}[dr]^{=}
      & & 0 \ar@{<-}[dd] \\
    0 \ar@{->}[ur] \ar@{->}[rr] \ar@{<-}[dd] &
      & \bigoplus (\nu_m \wedge \extp^\bullet\! H_{\widetilde{S\cup l}})\! \otimes\! \extp^\bullet W_{\widetilde{S \cup l \cup m}} \ar@{->}[ur] \ar@{<-}[dd] \\
    & \bigoplus T_{S\cup p} \ar@{->}'[r][rr] & & \bigoplus T_{S\cup m\cup p} \\
    \displaystyle{\bigoplus_{l,m,p \notin S} T_S} \ar@{->}[rr] \ar@{->}[ur] & & \bigoplus T_{S\cup m} \ar@{->}[ur]
  } \end{array} $
  
   $\begin{array}{c} \xymatrix@R=3mm@C=3mm{
    & \bigoplus \extp^\bullet H'_{\widetilde{S\cup l \cup p}} \otimes (\nu_p^\vee \wedge \extp^\bullet W'_{\widetilde{S\cup l}}) \ar@{->}[rr]
        \ar@{<-}'[d][dd]
     & & 0 \ar@{<-}[dd] \\
    0 \ar@{->}[ur] \ar@{->}[rr] \ar@{<-}[dd] &
      & \bigoplus T'_{S\cup l \cup m} \ar@{->}[ur]
        \ar@{<-}[dd]
       \ar@{<-}[lu]^{=} \\
   & \bigoplus T'_{S\cup p} \ar@{->}'[r][rr] & & \bigoplus T_{S\cup m\cup p} \\
    \displaystyle{\bigoplus_{l,m,p \notin S} T_S} \ar@{->}[rr] \ar@{->}[ur] & & \bigoplus T_{S\cup m} \ar@{->}[ur]
  }\end{array} $
\caption{The reduced cubes $\cal T'''(\cal V)$ and $\cal T'''(\cal V')$ of the Reidemeister 3 proof.}\label{fig:R3proof}
\end{figure}
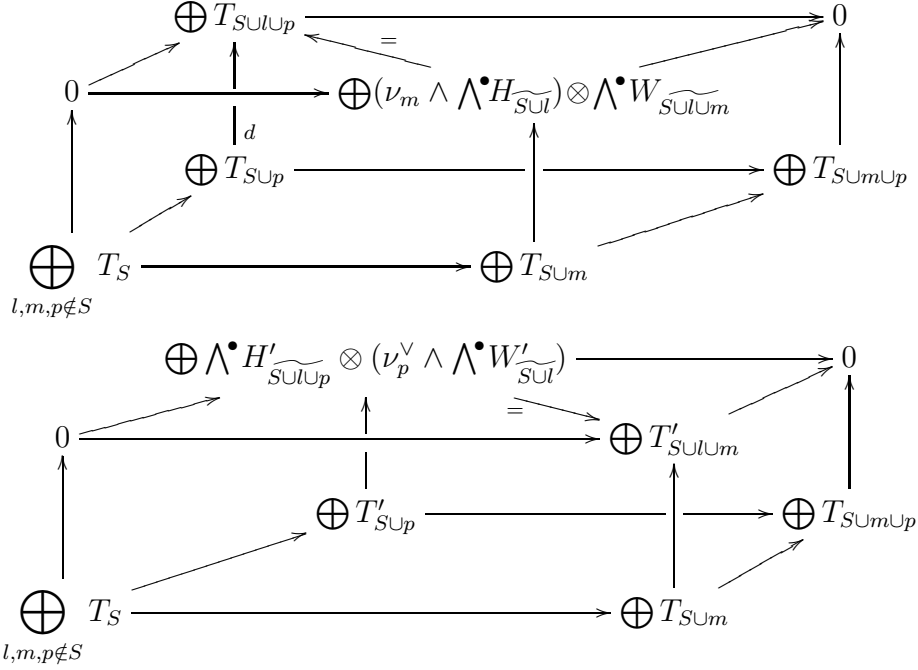

From here on we can follow the steps of the R2 proof, noting that since $\nu_l'=\alpha_p \nu_m-\alpha_m\nu_p$, we 
have $\langle \nu_m^\vee, \nu_p^\vee \rangle \neq 0$, and so we only need to deal with the easier Case 1. In the top left corner 
write $T'_{{S\cup p}} \cong \extp^\bullet H'_{\widetilde{S\cup p}} \otimes \extp^\bullet W'_{\widetilde{S}} \oplus 
\extp^\bullet H'_{\widetilde{S\cup p}} \otimes (\nu_p^\vee \wedge \extp^\bullet W'_{\widetilde{S}})$. 
(As before, $\nu_p^\vee$ really means the projection of 
$\nu_p^\vee$ onto $W'_{\widetilde{S\cup p}}$ .)
The top differential is an isomorphism when restricted to the first summand; factor out by the resulting acyclic subcomplex.
In the remaining complex the left vertical differential is an isomorphism so it can be inverted to produce a $\tau$ map
going diagonally down. The rest of the reduction is the same, and results in the cube shown in the lower row of Figure~\ref{fig:R3proof}.
In particular observe that on the top level everything in the remaining top left corner is identified via $\tau$ with something on
the right, as indicated by the ``$=$'' sign.

Our goal is to exhibit an isomorphism $\Phi: \cal T'''(\cal V) \to \cal T'''(\cal V')$. On the bottom level of the cubes, that is if $l \notin S$, we set
$\Phi: \extp^\bullet H_{\tilde{S}}\otimes \extp^\bullet W_{\tilde{S}} \to 
\extp^\bullet H'_{\tilde{S}}\otimes \extp^\bullet W'_{\tilde{S}}$ to be the restriction to $H_l$, which
is in these cases an isomorphism. On the top level $\Phi$ ``transposes'' the cube, identifyling $\bigoplus T_{S\cup l \cup p}$ with
$\bigoplus T'_{S\cup l \cup m}$ as follows: $H_{\widetilde{S\cup l \cup p}}$ and $H'_{\widetilde{S\cup l \cup m}}$ are both hyprplanes
of co-dimension one inside the othogonal complement of $\nu_l'$ in $H_{\widetilde{S\cup l \cup m}}$, and not orthogonal to each other
due to the assumption that $\nu_l^\vee=\alpha_m \nu_m^\vee+ \alpha_p \nu_p^\vee$. Hence, the orthogonal projection to 
$H'_{\widetilde{S\cup l \cup m}}$ restricts to an isomorphism $\Phi$ on $H_{\widetilde{S\cup l \cup p}}$. On the other hand,
$\nu_l^\vee=\alpha_m \nu_m^\vee+ \alpha_p \nu_p^\vee$ also implies that any linear dependency in $W_{\widetilde{S\cup l \cup p}}$
which involves $\nu_m$ and $\nu_p$ will in fact involve a scalar multiple of $\nu_l'$, hence $W_{\widetilde{S\cup l \cup p}}$ is
identical to $W'_{\widetilde{S\cup l \cup m}}$ (as subspaces of $\k^n$).

Having defined the isomorphism $\Phi$ on chain groups, we only have to check that it commutes with differentials and there
is anything to check only for the vertical ones. More precisely, one needs to verify that $\Phi \circ d= \tau \circ d \circ \Phi$ for
the differential $d$ in Figure \ref{fig:R3proof}, and similarly for the right front vertical differential.
This is a straightforward if slightly tedious exercise which we leave to the reader.
\end{proof}

\subsection{Signed arrangements from planar link projections and odd Khovanov homology}\label{subsec:links}
In this section we discuss the relationship of the framed odd Khovanov homology of signed arrangements introduced
in the preceding sections to the well-known Jones polynomial and odd Khovanov homology of links.

Let $D$ be a planar projection of an oriented link in the three-sphere. Our goal is to associate a signed vector arrangement
to $D$ by a two step process. First we will review the construction of the Tait graph
of $D$, a signed planar graph which encodes the link diagram. Then we associate a signed arrangement to the Tait graph by
the same method as in Section \ref{graphconst}.
We will refer to the vector arrangements constructed from planar link projections in this way as \emph{link vector arrangements}.  

\parpic[r]{\begin{picture}(0,0)%
\includegraphics{linkgraph.pstex}%
\end{picture}%
\setlength{\unitlength}{4144sp}%
\begingroup\makeatletter\ifx\SetFigFont\undefined%
\gdef\SetFigFont#1#2#3#4#5{%
  \reset@font\fontsize{#1}{#2pt}%
  \fontfamily{#3}\fontseries{#4}\fontshape{#5}%
  \selectfont}%
\fi\endgroup%
\begin{picture}(844,864)(5041,-2951)
\end{picture}%
}
To construct the Tait graph of a link diagram $D$, first suppose $D$ is connected as a planar graph.
Choose a checkerboard coloring of $D$, place vertices in each shaded
region, and draw an edge for each crossing, as illustrated in the picture on the right for a trefoil knot. 
Note that there are always two possible checkerboard shadings and the graphs corresponding to these
are planar duals of each other.

When the link diagram $D$ is not connected, the association of vertices is done component by component.  Thus, if $D$ is the disjoint union of 
two link diagrams, and we take the unbounded region to be shaded for both components, then this region will give rise to two vertices. Similarly, 
for planar graphs that are not connected we understand planar graph duality component-wise. In this way the double dual of a 
planar graph is the graph itself, and the planar graph associated to the opposite checkerboard coloring is the planar dual.

Of course, such a procedure does not distinguish between under and over crossings.  
In order to keep track of the under/over information, we associate to each crossing of a shaded oriented link diagram $D$ a ``shaded sign'', 
as shown in Figure \ref{fig:shadedsigns}.
The purpose of using shaded signs as opposed to the traditional crossing signs will become apparent later when we discuss
smoothings (see the proof of Proposition \ref{prop:JonesAndJones}).
Now, given any shaded link diagram, 
we can encode it as a signed graph, i.e. a graph with signs assigned to the edges. 

\begin{figure}[h]
 \input{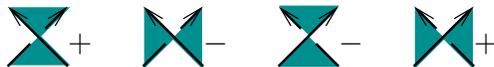}
\caption{Shaded sign conventions.}\label{fig:shadedsigns}
 \end{figure}

In turn, the procedure from Section \ref{graphconst} --- which was used there to associate an unsigned arrangement to an unsigned graph ---
extends in the obvious way to assign a signed arrangement to signed graphs. In this section we are going to use the reduced
construction (where one factors out by the line in the intersection of all the hyperplanes).

Let us continue by a brief discussion of framed links and Reidemeister moves. 
The Reidemeister 1 move most often used in knot theory allows for the un-twisting of a positive or negative ``kink'' 
(shown in Figure \ref{fig:R1}). The corresponding graph Reidemeister
moves (via the Tait graph construction) are the contraction of positive or negative leaf edges and the 
deletion of positive or negative loop edge, arising from opposite
checkerboard shadings, as shown in the top row of Figure \ref{fig:R1}. In accordance with Section \ref{subsec:SignedArrnments},
here we use a weak version of the Reidemeister 1 move (denoted $R1^f$ for ``framed''), 
shown in the bottom row of the same figure. Link diagrams modulo $R1^f$, $R2$ and $R3$ characterize {\em framed} links, where $R2$ and $R3$ are 
the usual Reidemeister 2 and 3 moves. Figure \ref{fig:R1} also illustrates 
how link Reidemeister moves give rise to Reidemeister moves of signed graphs. 
Note that the graph Reidemeister moves corresponding to the same link move but with
the opposite checkerboard shading are planar duals of each other (where planar duality of signed graphs switches the edge signs).
This in turn explains the Gale dual pairs of arrangement moves presented in Section \ref{subsec:SignedArrnments}.

\begin{figure}
 \input{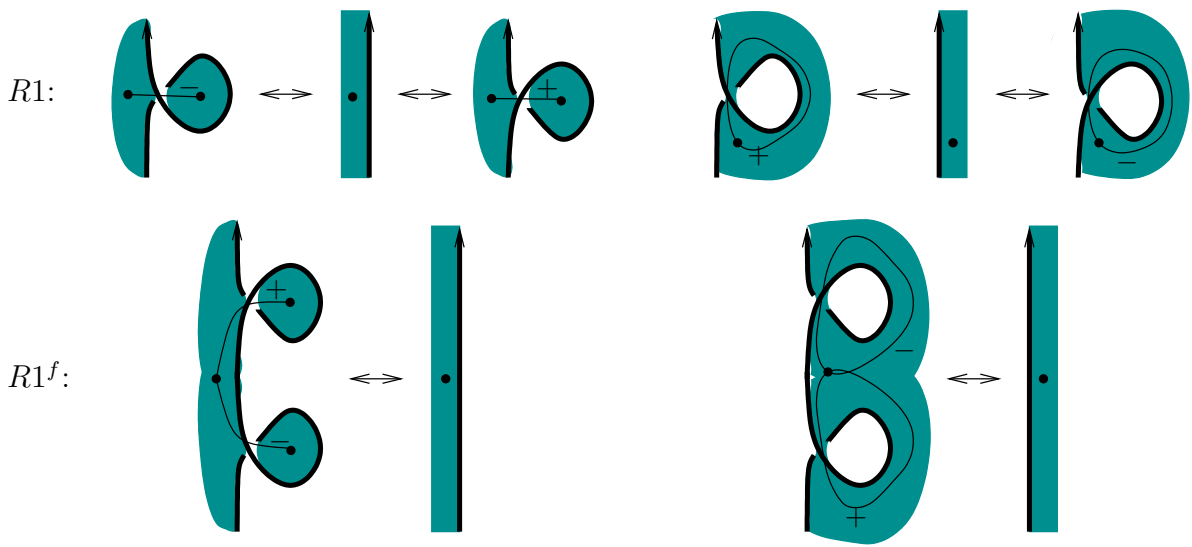}
 \caption{The Reidemeister 1 ($R1$) and framed Reidemeister 1 ($R1^f$) moves for links and signed graphs.}
 \label{fig:R1}
\end{figure}

\begin{remark}\label{rem:R1}{\rm
Note that the $wR1$ and $wR1^\vee$ moves of Section \ref{subsec:SignedArrnments}, 
when restricted to graphical or link arrangements, are slightly stronger than the $R1^f$ move shown in
Figure \ref{fig:R1}. For graphical arrangements $wR1$ and $wR1^\vee$ allow for the contraction or deletion 
of a pair of opposite sign leaf or loop edges, respectively, without requiring that the two 
leaf or loop edges be adjacent. In the language of links, this allows for the ``transfer of
framing from one link component to another'', without changing the total framing (writhe) of the link.
In other words, the link Reidemeister 1 move precisely equivalent to the 
$wR1$ and $wR1^\vee$ moves of Section \ref{subsec:SignedArrnments} allows for the simultaneous removal of 
a positive and a negative kink which need not be adjacent. Let us denote this move by $wR1$.
An example is shown in Figure \ref{fig:Framing}. 
} 
\end{remark}

\begin{figure}
 \input{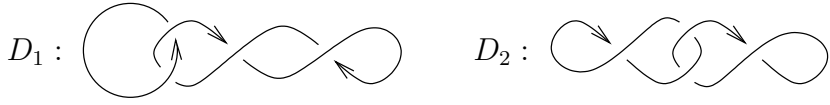}
 \caption{The links diagrams $D_1$ and $D_2$ are not framed Reidemeister equivalent: $D_1$ has two link components with 
 framings 0 and 2, while in $D_2$ the framings are 1 and 1, although the total framing in both 
 cases is $0+2=1+1=2$. However, $\cal V(D_1)$ and $\cal V(D_2)$ are Reidemeister equivalent 
 in the sense of Section \ref{subsec:SignedArrnments}. The reader can check that $D_1$ and $D_2$
 are indeed equivalent under the link $wR1$ move of Remark \ref{rem:R1}.}\label{fig:Framing}
\end{figure}

The following proposition follows from the definitions (or from the description of graph Reidemeister 
moves in \cite{BollobasRiordan}).
\begin{proposition}\label{prop:RmovesLinks}
The arrangement Reidemeister moves of Section \ref{subsec:SignedArrnments} preserve the class of link hyperplane arrangements.  When restricted to 
link vector arrangements, the arrangement Reidemeister moves agree with the $wR1$,
$R2$ and $R3$ Reidemeister moves of links. 
\end{proposition}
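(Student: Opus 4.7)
The plan is to translate each arrangement Reidemeister move of Section \ref{subsec:SignedArrnments} into a combinatorial condition on the Tait graph of a link diagram and then verify that this combinatorial condition describes a local modification of a link diagram by a framed Reidemeister move.

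First I will set up the dictionary between linear-algebraic properties of the vectors $\nu_e$ in a reduced graphical arrangement $\bar{\cal V}(G)$ and combinatorial properties of the edges of $G$. For an edge $e$ from vertex $v_i$ to $v_j$ we have $\nu_e = x_i - x_j$ modulo the line $x_1 = \cdots = x_k$, and a short calculation gives $\nu_e = 0$ iff $e$ is a loop; $\nu_e^\vee = 0$ iff $e$ is a bridge; $\nu_e = \alpha \nu_f$ with $\alpha \neq 0$ iff $e$ and $f$ are parallel edges between the same pair of vertices; $\nu_e^\vee = \alpha \nu_f^\vee$ with $\alpha \neq 0$ iff $\{e,f\}$ is a $2$-edge cut, equivalently a pair of parallel edges in the planar dual $G^*$; and a linear relation $\nu_l^\vee = \alpha_m \nu_m^\vee + \alpha_p \nu_p^\vee$ with all coefficients non-zero corresponds to three edges forming a triangular face in $G^*$. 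Moreover, deletion of a vector is deletion of the corresponding edge, and restriction to $\nu_l^\perp$ is contraction of the edge (identifying its endpoints). Since link arrangements arise from Tait graphs, which are always planar, Gale duality on the arrangement side corresponds to planar duality on the graph side, which in turn corresponds to reversing the checkerboard shading of the link diagram.

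With this dictionary in hand, each arrangement Reidemeister move translates directly into a familiar Tait graph operation corresponding to a shaded version of a framed link Reidemeister move. The $wR1$ move becomes deletion of a pair of opposite-sign loops and $wR1^\vee$ becomes contraction of a pair of opposite-sign bridges; these are the two shaded versions of the framed $R1$ move of Figure \ref{fig:R1}, and the total framing is preserved precisely because the two kinks have opposite signs (cf.\ Remark \ref{rem:R1}). The $R2$ move becomes deletion of a pair of opposite-sign parallel edges, and $R2^\vee$ becomes contraction of a pair of opposite-sign edges forming a $2$-edge cut; these are the two shaded versions of $R2$. Finally, $R3$ and its dual $R3^\vee$ correspond to the triangle exchange at a Tait graph triangle, matching the standard Tait graph picture of a Reidemeister $3$ move, with the sign pattern (two edges of one sign, one of the other) dictated exactly by the local crossing configuration under each checkerboard shading.

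The main obstacle will be matching the shaded-sign conventions of Figure \ref{fig:shadedsigns} with the sign patterns specified in each arrangement move; this requires drawing both checkerboard shadings of each Reidemeister picture and comparing the induced edge signs to those stipulated in Section \ref{subsec:SignedArrnments}. By Proposition \ref{prop:DualPairs} and the observation that Gale duality implements the swap of checkerboard shadings (with all edge signs flipped), it suffices to verify the sign matching for one representative from each Gale dual pair. Once this sign bookkeeping is done, both conclusions of the proposition follow simultaneously: each local Tait graph modification produced by an arrangement Reidemeister move is itself the Tait graph of a modified link diagram, so the class of link vector arrangements is preserved, and the induced operation on diagrams is precisely $wR1$, $R2$, or $R3$.
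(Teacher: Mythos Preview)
Your proposal is correct and is essentially the detailed unpacking of what the paper leaves implicit: the paper gives no proof at all, stating only that the proposition ``follows from the definitions (or from the description of graph Reidemeister moves in \cite{BollobasRiordan}).'' Your dictionary between linear-algebraic conditions on the $\nu_e$ and combinatorial conditions on edges of the Tait graph (loops, bridges, parallel pairs, $2$-edge cuts, triangles in the dual) is exactly the verification one would carry out to justify that sentence, and the reduction to one move per Gale-dual pair via Proposition~\ref{prop:DualPairs} is the natural shortcut.
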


Recall that the well-known normalized Jones polynomial of an oriented link diagram $D$, an (unframed) isotopy invariant, 
can be computed by a state sum formula. Number the 
crossings of the link from 1 to $n$. To each crossing corresponds a ``0-smoothing'' and a ``1-smoothing'', as shown in Figure
\ref{fig:smoothings}.
To each $S \subseteq [n]$ corresponds a total smoothing of $D$, namely by 0-smoothing all the crossings not in $S$ and
1-smoothing all the crossings in $S$. A complete smoothing of $D$ is a disjoint union of a number of circles embedded in the plane. 
Let $c(S)$ denote the number of such circles in the $S$-smoothing. Let $n_0$ and $n_1$ denote the number of positive and negative
crossings of $D$, respectively (in the standard knot theory sense, not as ``shaded signs''). The normalized Jones polynomial is then
computed by the state sum formula
\begin{equation}\label{eq:UsualJones}
\mathcal J(D)=\sum_{S\subseteq[n]} (-1)^{|S|+n_1}q^{|S|+n_0-2n_1}(q+q^{-1})^{c(S)-1}.
\end{equation}

\begin{figure}[h]
 \input{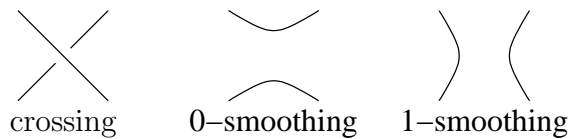}
\caption{The rule for smoothing a crossing.}\label{fig:smoothings}
 \end{figure}

\begin{proposition}\label{prop:JonesAndJones}
 Let $D$ be a link projection and let $\cal V(D)$ denote the corresponding signed vector arrangement (this involves some choices which
 are irrelevant by Proposition \ref{prop:LinkInv}).
 Then $J(\cal V(D))$ (of Equation (\ref{eq:Jones})) and $\cal J(D)$ (Equation (\ref{eq:UsualJones})) are related by the formula 
 $$J(\cal V(D))=\cal J(D)(-i\cdot q^{-3/2})^{w(D)},$$ where $w(D)=n_0-n_1$ 
 stands for the writhe of the link, a framed link invariant, and $i=\sqrt{-1}$. 
\end{proposition}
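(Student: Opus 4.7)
The plan is to match the two state sums (\ref{eq:Jones}) and (\ref{eq:UsualJones}) term by term: both are indexed by the same subsets $S \subseteq [n]$ of crossings of $D$, so it suffices to compare corresponding summands.

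The geometric heart of the proof is the identity
\begin{equation*}
\dim H_{\tilde S} + \dim W_{\tilde S} \;=\; c(S) - 1, \qquad \tilde S = S_+ \cup S_-^c.
\end{equation*}
I would prove this by working through the Tait graph $G$ of $D$. The shaded-sign convention of Figure \ref{fig:shadedsigns} is arranged so that an edge of $G$ lies in $\tilde S$ exactly when the corresponding crossing is smoothed in the direction that merges its two adjacent shaded regions of $D$. Hence the shaded connected components of the $S$-smoothing are in natural bijection with the connected components of $G_{\tilde S}$; combined with the reduced graphical construction of Section \ref{graphconst}, this gives $\dim H_{\tilde S} = k(G_{\tilde S}) - 1$. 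Applying the same reasoning to the planar dual $G^*$ (the Tait graph of the opposite checkerboard shading) and invoking the Gale/planar duality $\bar{\cal H}(G)^\vee = \bar{\cal H}(G^*)$ yields $\dim W_{\tilde S} = k(G^*_{\tilde S^c}) - 1$. Since $c$ disjoint circles in the plane cut it into $c+1$ regions of alternating shading, we have $c(S) + 1 = k(G_{\tilde S}) + k(G^*_{\tilde S^c})$, and the identity follows by subtracting $2$.

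With this in hand, the ratio of corresponding summands is independent of $S$:
\begin{equation*}
\frac{(-1)^{|S|-n/2}\, q^{|S|-n/2}}{(-1)^{|S|+n_1}\, q^{|S|+n_0-2n_1}} \;=\; (-1)^{-n/2-n_1}\, q^{-n/2-n_0+2n_1}.
\end{equation*}
Substituting $n = n_0 + n_1$ and $w(D) = n_0 - n_1$, the $q$-exponent collapses to $-\tfrac{3}{2}w(D)$. Using the convention $(-1)^{1/2} = i$ built into (\ref{eq:Jones}), the sign prefactor becomes
\begin{equation*}
(-1)^{-n/2-n_1} \;=\; i^{-n-2n_1} \;=\; i^{-n_0-3n_1} \;=\; i^{-(n_0-n_1)} \;=\; (-i)^{w(D)},
\end{equation*}
where the last two equalities use $i^{-3} = i$ and $i^{-1} = -i$. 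Multiplying the $q$-factor by the sign factor yields $(-i \cdot q^{-3/2})^{w(D)}$, as claimed.

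The main obstacle is the convention matching in the first step: the signed-arrangement conventions $[n]_+$, $[n]_-$, the shaded-sign rule of Figure \ref{fig:shadedsigns}, and the $0/1$-smoothing rule of Figure \ref{fig:smoothings} all conspire in just the right way to make $\tilde S$ the ``shaded-merging'' subset. Once the identity $c(S) - 1 = \dim H_{\tilde S} + \dim W_{\tilde S}$ is verified, the remainder of the argument is elementary bookkeeping of the prefactors.
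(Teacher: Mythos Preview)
Your proof is correct and follows essentially the same strategy as the paper: both reduce to the identity $c(S)-1=\dim H_{\tilde S}+\dim W_{\tilde S}$ via the Tait graph, then finish with prefactor bookkeeping. The only cosmetic difference is that the paper stays on one side of the duality---it observes that the circles of $D_S$ follow the edges of $G_{\tilde S}$ and hence are in bijection with the connected components together with the bounded faces of $G_{\tilde S}$, then invokes $\dim W_{\tilde S}=(\text{rank of cycle space})=(\text{bounded faces})$ directly---whereas you compute $\dim W_{\tilde S}$ by passing to the planar dual $G^*$ and counting components of $G^*_{\tilde S^c}$; these are equivalent via the standard fact that faces of $G_{\tilde S}$ correspond to components of $G^*_{\tilde S^c}$. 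Your explicit verification of the $(-i\cdot q^{-3/2})^{w(D)}$ factor is a bonus, as the paper leaves that to the reader.
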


Before proving this proposition let us demonstrate it on a simple example.

\begin{example}\rm{
Let $L_0$ denote the unknot, with its projection to the plane as a simple circle, as shown in the figure below. 
The vector arrangement $\cal V(L_0)$ is -- independently of the checkerboard shading -- the empty arrangement in a zero dimensional vector space 
$\{\{0\};\emptyset\}$,
and $\cal J(L)=J(\calV(L))=1$, as expected.
\begin{center}
 \input{figure8.pstex_t}
\end{center}
Let $L_1$ denote the ``positive kink'' as represented by the figure eight diagram shown in the figure above. $L_1$ differs from the unknot
only in a framing change, and its writhe is $w(L_1)=1$. The value of the Jones polynomial $\cal J(L_1)$ is still 1, and we expect that $J(\cal V(L_1))=-iq^{3/2}$.  
The associated vector arrangement $\cal V(L_1)$ depends on the choice of checkerboard shading for the diagram, as shown in the figure. If the two bounded
components are shaded, we obtain $\cal V(L_1)=\{\k; x_+\}$, a single positive vector in a vector space of dimension 1. If the unbounded component is shaded, we
get the Gale dual $\cal V(L_1)^\vee=\{\{0\};0_-\}$: a zero-dimensional vector space with a single zero vector of negative sign. The reader can check that 
feeding these two vector arrangements into the formula (\ref{eq:Jones}) results in the expected $-iq^{-3/2}$ in both cases.

The reader can check that the arrangements corresponding to the two checkerboard shadings of a negative kink $L_2$ are the same as above but with
the signs of the vectors switched; and that these both lead to $J(\cal V(L_2))=iq^{3/2}$.
}
\end{example}

{\it Proof of Proposition \ref{prop:JonesAndJones}.} 
The formulas (\ref{eq:Jones}) and (\ref{eq:UsualJones}) look similar, but differ in some normalizations (factors of $(-1)$ and $q$), and
more importantly, in the exponents of $q+q^{-1}$, which is $(\dim H_S+\dim W_S)$ in (\ref{eq:Jones}), but $(c(S)-1)$ in (\ref{eq:UsualJones}).
The main task is to show that indeed $\dim H_S+\dim W_S=c(S)-1$; showing that the difference in normalizations is canceled by the correction
term $(-i\cdot q^{-3/2})^{w(D)}$ is then a short routine check which we leave to the reader.

Let $D$ be an oriented link diagram with $n$ crossings numbered from 1 to $n$, 
and let us denote the total smoothing of $D$ arising from a subset $S\subseteq [n]$ by $D_S$. 
Choose a checkerboard shading of $D$ and let $G$ be the corresponding planar graph. Let $G_{\tilde{S}}$ denote the subgraph 
which contains only the edges of $G$ that are in $\tilde{S}=S_+\cup S_-^c$.
The key observation is that the lines of $D_S$ never ``cut across'' the edges of $G_{\tilde{S}}$. There are eight cases in checking this
claim: the four types of shaded crossings shown in Figure \ref{fig:shadedsigns}, and whether the crossing is in $S$ or not.
Two of these eight cases are shown in Figure \ref{fig:SmoothingEdge}, the rest are similar.
\begin{figure}[h!]
 \input{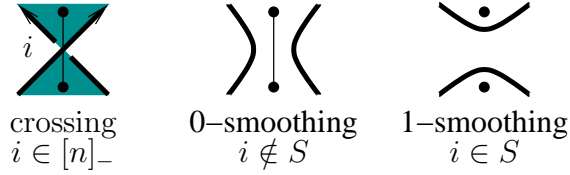}
 \caption{Smoothing lines don't cut across edges: the crossing $i$ is negative, hence if $i\notin S$ (i.e., the 0-smoothing is taken), then
 $i\in \tilde{S}$, hence the edge $i$ is in $G_{\tilde{S}}$. Similarly if $i \in S$ then the 1-smoothing is taken and $i \notin \tilde{S}$. }
 \label{fig:SmoothingEdge}
\end{figure}

If the checkerboard shading is so that the unbounded component is not shaded then this means that
the circles of $D_S$ ``follow the edges of $G_{\tilde{S}}$ on either side''.
Hence the circles of $D_S$
are in one-to-one correspondence with the connected components and the bounded faces of $G_{\tilde{S}}$: 
there is circle surrounding each connected component of $G_{\tilde{S}}$, as well as a circle inside each bounded face of
$G_{\tilde{S}}$. 

If the checkerboard shading has the unbounded component shaded, then it is not quite true that each connected component of $G_S$
has a circle surrounding it: the circle corresponding 
to one graph component is ``misplaced''. However, note that one can compactify the plane to obtain a sphere, then de-compactify by choosing the
infinity point to be in an un-shaded region. This reduces the case of a shaded unbounded region to the previous case of an unshaded unbounded region,
without changing the number of circles of $D_S$ or the number of components 
or faces of $G_{\tilde{S}}$. 

Recall from Section \ref{graphconst} that the dimension of $H_{\tilde{S}}$ is one less then the number of connected
components of $G_{\tilde{S}}$, and that the dimension of $W_{\tilde{S}}$ is the number of bounded faces of $G_{\tilde{S}}$. 
Hence $c(S)-1=\dim H_{\tilde{S}}+\dim W_{\tilde{S}}$, as needed.

Now the two state sum formulas only differ in normalizations and it is a quick check to verify that
the formula relating $J(\cal V(D))$ and $\cal J(D)$ holds.
\qed

Let us move on to show that the choices made in constructing the signed arrangement $\cal V(D)$ from $D$ are irrelevant
to $H_{Kh}(\cal V(D))$, and hence also to is graded Euler characteristic $J(\cal V(D))$.

\begin{proposition}\label{prop:LinkInv}
 Let $\cal V(D)$ denote the signed arrangement corresponding to a link diagram $D$. Then $H_{Kh}(\cal V(D))$
 does not depend on the choices made in the construction of $\cal V(D)$, and it is a framed link invariant. 
\end{proposition}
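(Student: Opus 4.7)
The plan is to separate the statement into two parts: first, well-definedness of $\calV(D)$ up to the operations under which $H_{Kh}$ is already known to be invariant, and second, invariance under framed Reidemeister moves.

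First I would enumerate the choices that enter the construction of $\calV(D)$ from $D$: (i) the choice of checkerboard shading, (ii) the ordering of the crossings of $D$ (which becomes the ordering of the vectors), (iii) the choice of which endpoint of each edge of the Tait graph $G$ is declared to be the ``start'' versus the ``end'' (this flips $\nu_e = x_i - x_j$ to its negative), and (iv) the ordering of the vertices of $G$ used to identify the ambient space with $\Bbbk^{|V(G)|}$. Item (iv) only affects $\calV(D)$ up to an overall orthogonal change of coordinates in the ambient space, which does not alter any of the spaces $H_S$, $V_S$, $W_S$ or any of the maps used to build $\calT(\calV(D))$, so it is harmless. Item (ii) is handled by the permutation invariance half of Proposition \ref{prop:smallissues}, and item (iii) by the sign-flip invariance in the same proposition. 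For item (i), switching the checkerboard shading replaces the Tait graph by its planar dual with reversed signs, and on the level of reduced graphical arrangements this produces exactly the Gale dual signed arrangement $\calV(D)^\vee$; Proposition \ref{prop:GaleInv} then supplies an isomorphism $\calT(\calV(D)) \cong \calT(\calV(D)^\vee)$. Combining these four facts yields the well-definedness statement.

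For framed link invariance, two framed link diagrams $D$ and $D'$ represent the same framed link iff they are related by a finite sequence of $wR1$, $R2$ and $R3$ moves (with $wR1$ as in Remark \ref{rem:R1}, which is the version that matches our hyperplane $wR1$ move). By Proposition \ref{prop:RmovesLinks}, each such link move translates into one of the arrangement Reidemeister moves listed in Section \ref{subsec:SignedArrnments} applied to $\calV(D)$, producing an arrangement which agrees with $\calV(D')$ up to the choices addressed in the previous paragraph. Theorem \ref{thm:Reidemeister} then provides a chain homotopy equivalence $\calT(\calV(D)) \simeq \calT(\calV(D'))$, hence an isomorphism on $H_{Kh}^\bullet$.

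The only mildly subtle point, and the one I would be most careful about, is the matching between the link $wR1$ move of Remark \ref{rem:R1} and the arrangement moves $wR1$, $wR1^\vee$: one has to verify that simultaneously removing a positive and a negative kink in the link diagram corresponds, on the Tait graph side for each of the two possible shadings, either to deleting a pair of opposite-sign loop edges or to contracting a pair of opposite-sign leaf edges, and that these graph operations translate under the construction of Section \ref{graphconst} into exactly the $wR1$ and $wR1^\vee$ conditions $\nu_l = \nu_m = 0$ and $\nu_l^\vee = \nu_m^\vee = 0$ with opposite signs. Once this dictionary is recorded, the other two Reidemeister cases are straightforward, and the proposition follows.
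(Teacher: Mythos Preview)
Your proof is correct and follows essentially the same route as the paper: independence from the shading via Gale duality (Proposition~\ref{prop:GaleInv}), from vector ordering and sign flips via Proposition~\ref{prop:smallissues}, and framed invariance via Proposition~\ref{prop:RmovesLinks} together with Theorem~\ref{thm:Reidemeister}. One small imprecision: you write that two diagrams represent the same \emph{framed} link iff they are related by $wR1$, $R2$, $R3$, but $wR1$ can shift framing between components (Remark~\ref{rem:R1}), so only the forward implication holds; fortunately that is the only direction you need, and the paper phrases this more carefully by noting that $R1^f$ is weaker than $wR1$.
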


\begin{proof}
There choices made when associating a vector arrangement to a link diagram were the choice of checkerboard shading for $D$,
the ordering of the crossings, and the choice of edge orientations for the Tait graph in order to construct a vector arrangement
from it. Any of these choices lead to isomorphic chain complexes $\cal T(\cal V(D))$: the opposite checkerboard shading
gives rise to the Gale dual arrangement $\cal V(D)^\vee$, whose chain complex $\cal T(\cal V(D)^\vee)$ is isomorphic
to $\cal T(\cal V(D))$ by Theorem \ref{prop:GaleInv}.
A permutation of the crossings amounts to a permutation of the vectors, and changing the orientation of an edge of the Tait graph 
multiplies the corresponding vector in $\cal V(D)$ by $(-1)$. Both of these were shown to not change the isomorphism class of the chain complex in 
Proposition \ref{prop:smallissues}.

Finally, $H_{Kh}(\cal V(D))$ was shown to be arrangement Reidemeister invariant, hence by Proposition \ref{prop:RmovesLinks} it is
an invariant under the $wR1$, $R2$ and $R3$ moves of links, and also under $R1^f$ since it is weaker than $wR1$. Hence $H_{Kh}(\cal V(D))$
is a framed link invariant.
\end{proof}

The following proposition states that for a link arrangement $\cal V(L)$, $H^\bullet_{Kh}(\cal V(L))$ is 
isomorphic to the reduced odd Khovanov homology of $L$ up to a framing-dependent 
degree shift. Note that this is a categorified version of Proposition \ref{prop:JonesAndJones}.
\begin{proposition}\label{prop:OddKH}
Let $\cal V(L)$ be a link vector arrangement corresponding to a planar projection of a link $L$ in the three-sphere. Recall that $w(L)=n_0-n_1$ denotes the
writhe of the link, and let $Kh^\bullet(L)$ denote the reduced odd Khovanov homology of $L$. Let $[\cdot]$ and $\{\cdot\}$ denote shifts in the homological
and the $q$-grading, respectively. Then 
$$H^\bullet_{Kh}(\cal V(L))\cong Kh^\bullet(L)[-1/2\cdot w(L)]\{-3/2\cdot w(L)\}.$$
\end{proposition}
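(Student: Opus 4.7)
The plan is to produce an explicit isomorphism from $\cal T(\cal V(L))$ to Bloom's chain complex for $L$ (with the appropriate writhe-dependent grading shifts), and then invoke Bloom's theorem \cite{Bloom} that his complex is chain homotopy equivalent to the reduced odd Khovanov complex of Ozsv\'ath--Rasmussen--Szab\'o. Both sides of the proposed isomorphism are framed link invariants: the left-hand side by Proposition \ref{prop:LinkInv}, and the right-hand side after the writhe shift because the unshifted $Kh^\bullet(L)$ is an unframed invariant and each positive/negative kink changes the writhe in a controlled way.

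First, I would identify the chain groups. At each vertex $S \subseteq [n]$ of the cube, the total smoothing $D_S$ of the link diagram consists of $c(S)$ disjoint circles in the plane, and by the circle-counting established in the proof of Proposition \ref{prop:JonesAndJones}, one has $c(S) - 1 = \dim H_{\tilde S} + \dim W_{\tilde S}$. Via the isomorphism $\wedge^\bullet H_{\tilde S} \otimes \wedge^\bullet W_{\tilde S} \cong \wedge^\bullet(H_{\tilde S} \oplus W_{\tilde S})$, the chain group $T_S$ is therefore canonically isomorphic (as a graded vector space) to the reduced Bloom-type chain group at the resolution $S$, namely the exterior algebra on the non-basepoint circles of $D_S$. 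The bi-grading conventions imposed on $\cal T(\cal V(L))$ are designed to reproduce, vertex by vertex, the standard Bloom bi-degree after the global writhe shift.

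Next, I would match differentials edge by edge. Flipping the $r$-th resolution either merges two circles of $D_S$ into one or splits one circle into two, and the condition $\nu_r \in V_{\tilde S}$ (or its failure) in the definition (\ref{eq:delta}) of $\delta_{S,r}$ is precisely the algebraic shadow of the merge/split dichotomy: dimensional counting against $\dim H_{\tilde S} + \dim W_{\tilde S} = c(S)-1$ shows that $\nu_r \in V_{\tilde S}$ exactly when the circle count drops. A case analysis on the four types in (\ref{eq:delta}) then shows that the combinations $d\otimes d$, $d\otimes w$, $b\otimes b$, and $w\otimes b$ correspond, under the identification of Step 1, to Bloom's odd merge and split maps (including whether the merged/split circle contains the basepoint, which is tracked by the $H$ versus $W$ factor). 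The scalar assignments $\epsilon_{S,r}$ needed to make the cube anti-commute are not uniquely determined by Lemma \ref{lem:assign}, so one has the freedom to choose them to match Bloom's conventions exactly.

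Finally, the degree shift is forced by taking Euler characteristics. The $q$-grading on $T_S$ was chosen so that $\chi_q(\cal T(\cal V(L))) = J(\cal V(L))$, and Proposition \ref{prop:JonesAndJones} identifies $J(\cal V(L))$ with $\cal J(L) \cdot (-iq^{-3/2})^{w(L)}$; the factor $(-i)^{w(L)}$ categorifies to the homological shift $[-w(L)/2]$ and $q^{-3w(L)/2}$ to the $q$-shift $\{-3w(L)/2\}$, matching the statement. The main obstacle is the detailed case analysis in the differential-matching step: one must check that the wedge, contraction, inclusion and orthogonal-projection maps on $\wedge^\bullet H_{\tilde S}$ and $\wedge^\bullet W_{\tilde S}$ genuinely reproduce Bloom's odd merge/split maps on the exterior algebra of the circles, including the subtle sign conventions that distinguish odd Khovanov homology from its even cousin, and that the remaining scalar ambiguity really can be absorbed into the cube $0$-cochain supplied by Lemma \ref{lem:assign}.
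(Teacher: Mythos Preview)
Your approach is essentially the same as the paper's: both go through Bloom's complex rather than directly through \cite{OzsvathRasmussenSzabo}, match chain groups via the circle-counting identity $c(S)-1=\dim H_{\tilde S}+\dim W_{\tilde S}$, and leave the edge-by-edge differential comparison as a combinatorial case check, with the scalar ambiguity absorbed by Lemma~\ref{lem:assign}. The paper's proof is in fact terser than yours; the one point it adds that you omit is that Bloom's construction is stated for \emph{connected} link projections, so the identification is first established for connected diagrams and then extended to arbitrary diagrams by Reidemeister invariance (any diagram can be made connected via $R2$ moves).
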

\begin{proof}
Note that while the chain groups $\cal T(\cal V(L)$ agree
with those of \cite{OzsvathRasmussenSzabo} up to the grading shifts above, the boundary maps 
we use differ slightly from the 
\cite{OzsvathRasmussenSzabo} boundary maps, which are not Gale-duality invariant. One can check this, for example, in the case of the Hopf link. 
However, in \cite{Bloom}, Bloom gives an alternative (slightly more symmetric) definition of a chain complex associated 
to a connected link projection, and he proves that the resulting homology is isomorphic to odd Khovanov homology.  It is a straightforward, if slightly tedious, 
combinatorial exercise (which we leave to the reader) to check that our boundary maps for a link vector arrangement agree with Bloom's, this
implies that $H^\bullet_{Kh}(\cal V)$ coincides with reduced odd Khovanov homology for connected diagrams. In turn, Reidemeister invariance implies that they
coincide for all links: any link diagram can be made connected by performing Reidemeister 2 moves.
\end{proof}

\begin{remark}\rm{We end with a brief note on Conway mutation and 2-isomorphisms.  We have seen that 
$H^\bullet_{Kh}(\cal V)$ of an arrangement $\cal V$ corresponding to a link is isomorphic to 
the reduced odd Khovanov homology the link, hence it is mutation invariant \cite{Bloom}. 

A 2-isomorphism of graphs is a cycle-preserving bijection between 
their edge sets. In \cite{Greene} Greene proves that two reduced,
alternating link diagrams are Conway mutants if and only if the corresponding (un-signed) graphs are 2-isomorphic, and uses this to show that 
the Heegaard Floer homology of the branched double cover provides a complete invariant for the mutation type of alternating links. 

Note that for un-signed (not necessarily
planar) graphical arrangements $\cal V$, $H^\bullet_{Kh}(\cal V)$ is a 2-isomorphism invariant by definition. 
Through \cite{Greene} this provides another proof of its mutation invariance 
for alternating links. }
\end{remark}

\end{document}